\newtheorem{theorem}{Theorem}[section]
\newtheorem{lemma}[theorem]{Lemma}
\newtheorem{corollary}[theorem]{Corollary}
\theoremstyle{definition}
\newtheorem{definition}[theorem]{Definition}
\newtheorem{example}[theorem]{Example}
\newtheorem{remark}[theorem]{Remark}
\numberwithin{equation}{section}
\title[Approximation and quasicontinuity]
{Approximation and quasicontinuity of Besov and Triebel--Lizorkin functions}
\author{Toni Heikkinen, Pekka Koskela and Heli Tuominen}
\DeclareMathOperator*{\essinf}{ess\,inf}
\newcommand\rn{\mathbb R^n}
\newcommand\re{\mathbb R}
\newcommand\rv{\overline{\mathbb R}}
\newcommand\n{\mathbb N}
\newcommand\z{\mathbb Z}
\newcommand\D{\mathbb D}
\newcommand\ph{\varphi}
\newcommand\eps{\varepsilon}
\newcommand\M{\operatorname{\mathcal M}}
\newcommand\cA{\mathcal A}
\newcommand\cD{\mathcal D}
\newcommand\cF{\mathcal F}
\providecommand{\ch}[1]{\text{\raise 2pt \hbox{$\chi$}\kern-0.2pt}_{#1}}
\providecommand{\vint}[1]{\mathchoice
          {\mathop{\vrule width 5pt height 3 pt depth -2.5pt
                  \kern -9.5pt \kern 1pt\intop}\nolimits_{\kern -5pt{#1}}}%
          {\mathop{\vrule width 5pt height 3 pt depth -2.6pt
                  \kern -6pt \intop}\nolimits_{\kern -3pt{#1}}}%
          {\mathop{\vrule width 5pt height 3 pt depth -2.6pt
                  \kern -6pt \intop}\nolimits_{\kern -3pt{#1}}}%
          {\mathop{\vrule width 5pt height 3 pt depth -2.6pt
                  \kern -6pt \intop}\nolimits_{\kern -3pt{#1}}}}
\begin{document}

\begin{abstract}
We show that, for $0<s<1$, $0<p,q<\infty$, Haj\l asz--Besov and Haj\l asz--Triebel--Lizorkin functions can 
be approximated in the norm by discrete median convolutions.
This allows us to show that, for these functions, the limit of medians,
\[
\lim_{r\to 0}m_u^\gamma(B(x,r))=u^*(x),
\] 
exists quasieverywhere and defines a quasicontinuous representative of $u$. 
The above limit exists quasieverywhere also for Haj\l asz functions $u\in M^{s,p}$, $0<s\le 1$, $0<p<\infty$, but approximation of $u$ in $M^{s,p}$ by discrete (median) convolutions is not in general possible.
\end{abstract}

\keywords{Besov space, Triebel--Lizorkin space, fractional Sobolev space, metric measure space, median, quasicontinuity}
\subjclass[2010]{46E35, 43A85} 


\maketitle

\section{Introduction}
Smooth functions are dense in the classical Sobolev space $W^{1,p}(\rn),$ as 
can be seen by convolution approximation. 
This fact implies that each Sobolev function has a quasicontinuous 
representative and  can be defined everywhere, except for a set of 
$p$-capacity zero, in terms of limits of integral averages. 
Indeed, by the Lebesgue differentiation theorem, almost every point 
$x\in\rn$ is a Lebesgue point,  
\[
\lim_{r\to0}\frac1{|B(x,r)|}\int_{B(x,r)}|u(y)-u(x)|\,dy=0,
\]
which implies that 
\begin{equation}\label{leb point}
\lim _{r\to0}\frac1{|B(x,r)|}\int_{B(x,r)}u(y)\,dy=u(x)
\end{equation}
for such $x$ when $u$ is locally integrable.
For $u\in W^{1,p}(\rn)$, the set of exceptional points where 
\eqref{leb point} fails, is smaller, of $p$-capacity zero. 
The central tools to prove this fact are density of smooth functions and 
capacitary weak-type estimates for the Hardy--Littlewood maximal function, 
see for example \cite{EG}, \cite{Zi}. 
Part of this argument is almost axiomatic and generalizes to other function 
spaces with suitable modifications. 

In this paper, we study approximation of Besov and Triebel--Lizorkin 
functions and the consequent existence of Lebesgue points in a metric space 
$X$ equipped with a doubling measure $\mu$. Because of lack of a 
linear structure, one cannot use the usual convolution approach. 
Hence we employ a discrete convolution, which for a locally integrable function 
$u$ at the scale $r$  is defined as
\[
u_r=\sum_{i}\frac1{|B_i|}\int_{B_i}u\,d\mu\ \ph_i,
\]
where $\{B_i\}_i$ is a covering of $X$ by balls of radius $r$ and 
$\{\ph_i\}_i$ is a partition of unity associated to the covering 
$\{B_i\}_i$, see Section \ref{sec: preliminaries} for details. 
Discrete convolutions have been standard tools in analysis in doubling metric 
spaces starting from the works \cite{CW} and \cite{MS}. 

If our function $u$ fails to be locally integrable, as can happen with 
Besov and Triebel--Lizorkin spaces for small indices $p$ and $q$,
we need to modify the definition of our discrete convolution and use 
medians instead of integral averages.
For $0<\gamma\le 1/2$, the $\gamma$-median of a measurable function
$u\colon X\to\rv$ over a set $A$ of finite measure is
\[
m_u^\gamma(A)=\inf\big\{a\in\re: \mu(\{x\in A: u(x)>a\})< \gamma\mu(A)\big\}.
\]
Median values and median maximal functions have already been studied and used 
in different problems of analysis for quite some time, see   
\cite{FZ}, \cite{Fu}, \cite{GKZ}, \cite{HIT}, \cite{JPW}, \cite{JT}, \cite{J}, \cite{L}, \cite{LP}, \cite{PT}, \cite{St} and \cite{Zh}. 
\medskip

We consider the Haj\l asz--Besov spaces $N^s_{p,q}(X)$ and 
Haj\l asz--Triebel--Lizorkin spa\-ces $M^s_{p,q}(X)$ which were recently 
introduced in \cite{KYZ} and studied for example in \cite{GKZ}, \cite{HeTu}, 
\cite{HeTu2} and \cite{HIT}. 
These spaces consist of those $L^p$-functions $u$ that have a fractional 
$s$-gradient with finite mixed $l^q(L^p(X))$- or $L^p(X,\,l^q)$-norm. 
A sequence $(g_k)_{k\in\z}$ of measurable functions $g_k\colon X\to[0,\infty]$ 
is a fractional $s$-gradient of $u$ if it satisfies the
Haj\l asz-type pointwise inequality
\[
|u(x)-u(y)|\le d(x,y)^s(g_k(x)+g_k(y))
\]
for all $k\in\z$ and almost all $x,y\in X$ satisfying 
$2^{-k-1}\le d(x,y)<2^{-k}$, see Section \ref{sec: preliminaries} for details.
As usual, the homogeneous versions $\dot N^s_{p,q}(X)$ and $\dot M^s_{p,q}(X)$
of our  Haj\l asz--Besov and Haj\l asz--Triebel--Lizorkin spaces are defined
by relaxing the $L^p$-integrability assumption to mere measurability and
finiteness almost everywhere.

In the Euclidean case, $N^s_{p,q}(\rn)=B^s_{p,q}(\rn)$ and $M^s_{p,q}(\rn)=F^s_{p,q}(\rn)$ for all $0<p<\infty$, $0<q\le\infty$, 
$0<s<1$, where $B^s_{p,q}(\rn)$ and $F^s_{p,q}(\rn)$ are the Besov and 
Triebel--Lizorkin spaces defined via an
$L^p$-modulus of smoothness, see \cite{GKZ}. 
Recall also that the Fourier analytic approach gives the same spaces when 
$p>n/(n+s)$ in the Besov case and when $p,q>n/(n+s)$ in the Triebel--Lizorkin 
case. Hence, for such indices, 
our results cover also the case of classical Besov and Triebel--Lizorkin spaces.
\medskip

Our first main result shows that locally Lipschitz functions are dense 
in $N^s_{p,q}(X)$ and in $M^s_{p,q}(X)$. 

\begin{theorem}\label{density of loc lip} 
Let $0<\gamma\le 1/2$, $0<s<1$, $0<p,q<\infty$ and $u\in \dot N^s_{p,q}(X)$. 
Then the discrete $\gamma$-median convolution approximations 
$u_i=u_{2^{-i}}^\gamma$ converge to $u$ in $N^s_{p,q}(X)$ as $i\to\infty$. 
The same result holds with $N^s_{p,q}$ replaced by $M^s_{p,q}$.
\end{theorem}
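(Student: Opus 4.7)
The plan is to reduce the theorem to (i) a pointwise bound on $|u-u_i|$ involving a maximal object built from the original $s$-gradient $(g_k)$ of $u$, (ii) a corresponding pointwise bound on a fractional $s$-gradient of the difference $u-u_i$, and then (iii) use dominated convergence in the mixed norms $l^q(L^p)$ (Besov case) and $L^p(l^q)$ (Triebel--Lizorkin case). Throughout, the key analytic tool replacing the usual Hardy--Littlewood maximal function is the $\gamma$-median maximal operator, which is known to be bounded on $L^p(X)$ for all $0<p<\infty$ (and on its vector-valued analogues), and which is what makes the approach work for small $p,q$.

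For (i), let $\{B_j\}$ be the cover at scale $r=2^{-i}$ and $\{\ph_j\}$ the associated partition of unity, so $u_i(x)=\sum_j m_u^\gamma(B_j)\ph_j(x)$ and only a bounded number of $\ph_j$ are nonzero at any $x$. Using $\sum_j\ph_j\equiv 1$ we have $|u(x)-u_i(x)|\le\sum_{j:\,\ph_j(x)\ne 0}|u(x)-m_u^\gamma(B_j)|$, and each relevant $B_j$ lies in $B(x,C2^{-i})$. Using the elementary median property that $|m_u^\gamma(B)-a|\le h$ whenever $|u-a|\le h$ on a subset of $B$ of measure at least $(1-\gamma)\mu(B)$, combined with dyadic chaining of the Haj\l asz gradient inequality below scale $2^{-i}$, I expect to obtain
\[
|u(x)-u_i(x)|\le C2^{-is}\sum_{k\ge i-C_0}2^{-(k-i)s}\bigl(g_k(x)+m_{g_k}^{\gamma'}(B(x,C2^{-i}))\bigr),
\]
for some $\gamma'$ depending on $\gamma$ and the doubling constant.

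For (ii), I would construct a fractional $s$-gradient $(\tilde g_k^{(i)})$ of $u-u_i$ by splitting scales. For $x,y$ with $2^{-k-1}\le d(x,y)<2^{-k}$ and $k\ge i$, the original gradient $(g_k)$ handles the $u$-part, while the $u_i$-part is controlled by local Lipschitz-type estimates on the partition of unity: differences of adjacent medians $m_u^\gamma(B_j)-m_u^\gamma(B_{j'})$ are governed by the same quantities appearing in (i). For $k<i$, both the $u$- and $u_i$-oscillations are dominated via the pointwise estimate from (i) at $x$ and at $y$, and the factor $d(x,y)^s\ge c2^{-is}$ absorbs the $2^{-is}$ prefactor. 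The result is a pointwise majorization of $\tilde g_k^{(i)}$ by a geometrically weighted sum of $g_j$ and their $\gamma'$-median maximal functions, with indices $j\ge\min(k,i)-C_0$ and weights summable in $|j-\min(k,i)|$.

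Given these ingredients, the passage to the limit is standard: boundedness of the median maximal operator on $l^q(L^p)$ and on $L^p(l^q)$ provides a fixed $i$-independent dominant in the mixed norm for $(\tilde g_k^{(i)})$, while each $\tilde g_k^{(i)}$ tends to $0$ pointwise a.e.\ as $i\to\infty$ by the geometric tail decay; dominated convergence yields both $\|u-u_i\|_{L^p}\to 0$ and mixed-norm convergence of the gradient to zero. The main obstacle is (ii): because the median convolution is nonlinear in $u$, one cannot just import an $s$-gradient for $u_i$ from $(g_k)$, and a single sequence $(\tilde g_k^{(i)})$ has to be produced which simultaneously controls the oscillations of $u$, the oscillations of $u_i$, and the pointwise defect $u-u_i$, at every scale and uniformly in $i$. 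The bookkeeping at the threshold $k\approx i$, and the translation of ``median of $u$'' bounds into ``median of $g_k$'' bounds via the Haj\l asz inequality, is where the argument becomes technical.
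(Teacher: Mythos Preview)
Your overall strategy---write $u-u_i=\sum_j(u-m_u^\gamma(B_j))\ph_j$, use a Leibniz-type rule to produce a candidate fractional $s$-gradient $(h_k)$ of $u-u_i$ with the case split $k>i$ versus $k\le i$, bound the pointwise defect $|u-m_u^\gamma(B_j)|$ by a maximal quantity built from the $g_l$'s, and then pass to mixed-norm estimates---is exactly the architecture of the paper's proof. The paper organizes the same bookkeeping: for $k>i$ the term $g_k$ survives together with a contribution of size $2^{(k-i)(s-1)}\cdot(\text{defect bound})$, while for $k\le i$ one uses only $2^{(k-i)s}\cdot(\text{defect bound})$; summing in $k$ reproduces a tail of the gradient sequence, which tends to zero.

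The one place your sketch diverges from the paper, and where there is a genuine difficulty, is the pointwise bound in (i). You propose
\[
|u(x)-u_i(x)|\le C\,2^{-is}\sum_{k\ge i-C_0}2^{-(k-i)s}\bigl(g_k(x)+m_{g_k}^{\gamma'}(B(x,C2^{-i}))\bigr),
\]
obtained by ``dyadic chaining of the Haj\l asz gradient inequality''. The problem is that the fractional gradient inequality $|u(z)-u(w)|\le d(z,w)^s(g_k(z)+g_k(w))$ only applies when $2^{-k-1}\le d(z,w)<2^{-k}$; for arbitrary $z$ in a ball $B(x,2^{-k})$ there is no guarantee that a companion point $w$ at the correct distance exists, so one cannot feed the inequality directly into the median property you quote. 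This obstruction is real: the paper's \emph{second} proof (Section~\ref{sec: easy proof}) obtains a bound of exactly your shape---indeed a sharper one involving only $\max_{i-4\le k\le i}g_k$ and the median maximal operator---but only under the additional hypothesis that spheres are nonempty, precisely so that annuli can supply the companion points. In the general doubling setting the paper instead invokes a Sobolev--Poincar\'e inequality (Lemma~\ref{med poincare}), which yields
\[
|u(x)-m_u^\gamma(B_j)|\le C\,2^{-is}\sum_{l>i-5}2^{(i-l)s'}\bigl(\M g_l^t(x)\bigr)^{1/t}
\]
for any $0<t<\min\{p,q\}$, i.e.\ the Hardy--Littlewood maximal function of $g_l^t$ rather than the median maximal function of $g_l$. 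The mixed-norm step then uses Hardy--Littlewood and Fefferman--Stein in $L^{p/t}(l^{q/t})$ (Lemma~\ref{lemma 3}), not the median maximal theorem you invoke. So either add the nonempty-spheres hypothesis and proceed as you outline, or replace the median of $g_k$ over a fixed ball in (i) by $(\M g_l^t)^{1/t}$ and adjust (iii) accordingly; the rest of your plan then goes through exactly as in the paper.
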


Concerning earlier approximation results for Besov and Triebel--Lizorkin 
functions, recall first that smooth functions are dense in the classical Besov 
and Triebel--Lizorkin spaces for finite $p,q$ but not necessarily otherwise,
see \cite{Tr} for precise statements.
For metric spaces equipped with doubling measure, the density of Lipschitz 
functions in the homogenous spaces $\dot M^s_{p,q}(X)$ for 
$Q/(Q+s)<p,q<\infty$ has been established in \cite{BSS}. For the so-called
RD-spaces, the density follows already from \cite[Prop 5.21]{HMY} for 
$\dot N^s_{p,q}(X)$ with $Q/(Q+s)<p<\infty$, $0<q<\infty$, and for 
$\dot M^s_{p,q}(X)$ with $Q/(Q+s)<p,q<\infty$. 
Moreover, the density in $N^{Q/p}_{p,p}(X)$, $p>Q$, for Ahlfors $Q$-regular 
metric spaces 
$X$ has been proved in \cite{B}, \cite{Co}.
\medskip

Our second main results says that the limit of medians exists outside a set 
of capacity zero. We say that such points are generalized Lebesgue points of $u$.

\begin{theorem}\label{main thm}
Let $0<s<1$, $0<p,q<\infty$ and $\cF\in\{N^s_{p,q},M^s_{p,q}\}$ or $0<s\le 1$, $0<p<\infty$ and $\cF=M^s_{p,\infty}=M^{s,p}$. 
Let $u\in\dot\cF(X)$. 
Then there exists a set $E\subset X$  with  $C_{\cF}(E)=0$
such that the limit \[\lim_{r\to 0}m_u^{\gamma}(B(x,r))=u^*(x)\] exists for every $x\in X\setminus E$ and $0<\gamma\le 1/2$.
Moreover, $u^*$ is an $\cF$-quasicontinuous representative of $u$.
\end{theorem}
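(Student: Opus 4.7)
The plan is a capacitary Borel--Cantelli argument built on three ingredients: the norm approximation supplied by Theorem \ref{density of loc lip}, a capacitary weak-type bound for the median maximal function
\[
\M^\gamma v(x) = \sup_{r>0}\big|m_v^\gamma(B(x,r))\big|,
\]
and the elementary observation that for any continuous $v$, $\lim_{r\to 0} m_v^\gamma(B(x,r)) = v(x)$ at every $x$ (since for every $\eps>0$ the sets $\{v>v(x)+\eps\}$ and $\{v<v(x)-\eps\}$ have vanishing relative measure in $B(x,r)$).

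For $\cF\in\{N^s_{p,q},M^s_{p,q}\}$, choose locally Lipschitz $v_k$ with $\|u-v_k\|_\cF$ decaying geometrically, as Theorem \ref{density of loc lip} permits. The quasi-subadditivity of the median in the form $m_f^\gamma(A)\le m_g^{\gamma/2}(A)+m_{|f-g|}^{\gamma/2}(A)$, applied in both directions with $f=u$, $g=v_k$, gives
\[
\big|m_u^\gamma(B(x,r))-m_{v_k}^{\gamma/2}(B(x,r))\big| \lesssim \M^{\gamma/2}(u-v_k)(x).
\]
The capacitary weak-type estimate of the form
\[
C_\cF\big(\{\M^{\gamma/2}(u-v_k)>2^{-k}\}\big)\lesssim 2^{k\alpha}\,\|u-v_k\|_\cF^\alpha,
\]
with $\alpha$ the quasi-Banach exponent of $\cF$, is the standard weak-type estimate in this setting (cf.\ \cite{HIT}). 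A Borel--Cantelli argument yields $C_\cF(E)=0$ for $E=\bigcap_N\bigcup_{k\ge N}\{\M^{\gamma/2}(u-v_k)>2^{-k}\}$. For $x\notin E$, the above comparison together with the continuity of each $v_k$ forces the medians $m_u^\gamma(B(x,r))$ to be Cauchy as $r\to 0$; denote the limit $u^*(x)$. Independence of $u^*(x)$ from $\gamma\in(0,1/2]$ follows by running the argument with a countable dyadic sequence of median indices and using monotonicity of $m_u^\gamma$ in $\gamma$.

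Quasicontinuity comes from the same construction: given $\eps>0$ pick $N$ so that $U_\eps=\bigcup_{k\ge N}\{\M^{\gamma/2}(u-v_k)>2^{-k}\}$ has $C_\cF(U_\eps)<\eps$. The set $U_\eps$ is open by lower semicontinuity of $\M^{\gamma/2}$, and on $X\setminus U_\eps$ the continuous functions $v_k$ converge uniformly to $u^*$, whence $u^*|_{X\setminus U_\eps}$ is continuous. For the remaining case $\cF=M^{s,p}$ ($q=\infty$), where norm approximation by medians fails, I would instead argue directly from the Haj\l asz inequality: if $g\in L^p(X)$ is an $s$-gradient of $u$, then
\[
\big|m_u^\gamma(B(x,r))-m_u^\gamma(B(x,2r))\big| \lesssim r^s\,\M^{\gamma'}g(x),
\]
which is summable over dyadic scales at every $x$ with $\M^{\gamma'}g(x)<\infty$; the exceptional set has $M^{s,p}$-capacity zero by the same weak-type inequality applied to $g$ itself.

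The principal obstacle is arranging the capacitary weak-type estimate for $\M^{\gamma/2}$ uniformly over the full quasi-Banach range $0<p,q\le\infty$: one must choose the outer exponent $\alpha=\min(1,p,q)$ compatibly with the definition of $C_\cF$, and track the index shifts $\gamma\mapsto\gamma/2$ carefully to ensure the stated conclusion covers every $\gamma\in(0,1/2]$. Once those ingredients are in place the remainder is the classical Borel--Cantelli--Maz'ya scheme used to produce quasicontinuous representatives of Sobolev functions, with medians in place of integral averages.
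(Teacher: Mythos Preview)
Your overall scheme for the case $q<\infty$ is exactly the paper's: approximate by continuous functions (Theorem~\ref{density of loc lip}), run a Borel--Cantelli argument over the superlevel sets of $\M^{\gamma'}(u-v_k)$, and read off both the existence of the median limit and quasicontinuity from uniform convergence on the complement. Two points deserve comment.

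First, the capacitary weak-type estimate you invoke is \emph{not} in \cite{HIT}; it is Theorem~\ref{cap weak type} of the present paper, and it rests on the boundedness of the discrete median maximal operator on $\cF(X)$ (Theorem~\ref{median maximal op bounded}), which in turn requires constructing a fractional $s$-gradient for $\M^{\gamma,*}u$ (Lemmas~\ref{frac grad for med max} and \ref{gradient for discrete median maximal function}). This is the main technical work of the section, so treating it as an off-the-shelf ingredient understates what is actually needed. Relatedly, the openness of your sets $U_\eps$ is obtained in the paper not from lower semicontinuity of $\M^{\gamma}$ itself but by passing to the comparable discrete version $\M^{\gamma/C,*}$, which is a supremum of continuous functions; and the reduction from $u\in\dot\cF(X)$ to $u\in\cF(X)$ is done via Lipschitz cut-offs, which you do not mention.

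Second, your alternative route for $\cF=M^{s,p}$ has a genuine gap. The telescoping estimate
\[
|m_u^\gamma(B(x,r))-m_u^\gamma(B(x,2r))|\le Cr^s\M^{\gamma}g(x)
\]
is correct for $g\in\cD^s(u)$, and it does show that the dyadic medians form a Cauchy sequence at every $x$ with $\M^{\gamma}g(x)<\infty$. But that exceptional set is only known to have \emph{measure} zero (since $\M^{\gamma}g\in L^p$); your claim that it has $M^{s,p}$-\emph{capacity} zero ``by the same weak-type inequality applied to $g$'' would require $g\in M^{s,p}(X)$, which is not assumed. Moreover, this direct argument produces no sequence of continuous functions converging uniformly off a small set, so quasicontinuity of $u^*$ is left unproved. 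The paper handles $q=\infty$ differently: it cites \cite[Proposition~4.5]{SYY} for continuous approximations in $M^{s,p}(X)$ (circumventing the failure of Theorem~\ref{density of loc lip} when $q=\infty$) and then runs the \emph{same} Borel--Cantelli argument as in the $q<\infty$ case, using the weak-type bound of Theorem~\ref{cap weak type}, which does cover $M^{s,p}$ via Lemma~\ref{gradient for discrete median maximal function}.
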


The existence of Lebesgue points for Besov and Triebel--Lizorkin functions in the Euclidean
setting has been studied in \cite{AH}, \cite{D}, \cite{HN}, \cite{Ne90}, 
\cite{N2}, but we are not aware of earlier results in the metric setting. 
\medskip

The paper is organized as follows. We present the notation and definitions 
used in the paper in Section \ref{sec: preliminaries}.
We discuss $\gamma$-medians and their basic properties and define two 
maximal functions related to $\gamma$-medians. The discrete version is a 
modification of the discrete maximal function used for 
locally integrable functions and comparable with the maximal function 
defined via medians over balls. 

We prove Theorem \ref{density of loc lip}, convergence of discrete median 
convolutions in the Haj\-\l asz--Besov and Haj\l asz--Triebel--Lizorkin norms,
 in Sections \ref{sec: appr by med convolutions} and \ref{sec: easy proof}. 
In the former section, the proof is given in the setting of a doubling metric space. 
In the latter section, we give a a substantially
simpler proof under the additional assumption that spheres are 
nonempty. We also give a simple example which shows that discrete (median) 
convolution approximations of a Haj\l asz--Sobolev function $u$ do not necessarily
converge to $u$ in the norm.

In Section \ref{sec: discrete convolutions}, we show that also the discrete 
convolution approximations converge in the Haj\l asz--Besov and 
Haj\l asz--Triebel--Lizorkin norms if $Q/(Q+s)<p<\infty$ 
(where $Q$ is the doubling dimension) in the Besov case and 
$Q/(Q+s)<q<\infty$ in the Triebel--Lizorkin case. 

In Section \ref{sec: capacity}, we discuss capacities connected to the
Haj\l asz--Besov and Haj\l asz--Triebel--Lizorkin spaces. These 
capacities are not necessarily subadditive, but they satisfy the inequality 
$\operatorname {Cap}\big(\cup_{i}E_i\big)^r\le c\sum_{i}\operatorname {Cap}(E_i)^r$
for all sets $E_i\subset X$, $i\in\n$ for $r=\min\{1,q/p\}$ 
(compare this with the Aoki--Rolewicz Theorem for triangle inequality for quasi norms).

Our second main result, Theorem \ref{main thm} is proved in Section 
\ref{sec: qc}. The central tools used in the proof are 
Theorems \ref{median maximal op bounded} and \ref{cap weak type} which show 
that the discrete median maximal operator is bounded on 
Haj\l asz--Besov and Haj\l asz--Triebel--Lizorkin spaces and that a capacitary 
weak-type estimate holds for the median maximal function. 

The final section deals with the existence of 
the classical Lebesgue points defined as limits of integral averages over 
balls. We show that counterparts of results of Section \ref{sec: qc} hold for 
locally integrable functions for suitable parameters of the space. 
In particular, Corollary \ref{leb points for Msp} gives new results for the 
existence Lebesgue points of functions in fractional 
Haj\l asz--Sobolev spaces $M^{s,p}(X)$ for parameters 
$0<s\le1$ and $Q/(Q+s)<p<\infty$.

\section{Preliminaries}\label{sec: preliminaries}

\subsection{Basic assumptions}
In this paper,  $X=(X, d,\mu)$ is a metric measure space equipped with a metric $d$ and a Borel regular,
doubling outer measure $\mu$, for which the measure of every ball is positive and finite.
The doubling property means that there exists a constant $c_d>0$, called {\em the doubling constant}, such that
\[
\mu(B(x,2r))\le c_d\mu(B(x,r))
\]
for every ball $B(x,r)=\{y\in X:d(y,x)<r\}$, where $x\in X$ and $r>0$.

The doubling condition gives an upper bound for the dimension of $X$ because there is a constant $c=c(c_d)$ such that
\[
\frac{\mu(B(y,r))}{\mu(B(x,R))}\ge c\Big(\frac rR\Big)^Q 
\]
for every $0<r\le R$ and $y\in B(x,R)$ with $Q=\log_2 c_d$. 
Below $Q$ refers to this dimension.

{\em The integral average} of a locally integrable function $u$ over a set $A$ of positive and finite measure is
\[
u_A=\vint{A}u\,d\mu=\frac{1}{\mu(A)}\int_A u\,d\mu.
\]

By $\ch{E}$, we denote the characteristic function of a set $E\subset X$ and by $\rv$, the extended real numbers $[-\infty,\infty]$.
$L^0(X)$ is the set of all measurable, almost everywhere finite functions $u\colon X\to\rv$.
In general, $C$ and $c$ are positive constants whose values are not necessarily same at each occurrence.
When we want to stress that the constant depends on other constants or parameters $a,b,\dots$, we write $C=C(a,b,\dots)$.

\subsection{Haj\l asz  spaces, Haj\l asz--Besov and Haj\l asz--Triebel--Lizorkin spaces}
Among several definitions for Besov and Triebel--Lizorkin spaces in metric measure spaces, we use the definitions given by pointwise inequalities in \cite{KYZ}. This definition is motivated by the Haj\l asz--Sobolev spaces $M^{s,p}(X)$, defined for $s=1$, $p\ge1$ in \cite{H} and for fractional scales in \cite{Y}.

\begin{definition}
Let $0<s<\infty$. 
A measurable function $g\ge0$ is an  {\em $s$-gradient} of a function $u\in L^0(X)$ if there exists a set
$E\subset X$ with $\mu(E)=0$ such that for all $x,y\in X\setminus E$,
\begin{equation}\label{eq: gradient}
|u(x)-u(y)|\le d(x,y)^s(g(x)+g(y)).
\end{equation}
The collection of all $s$-gradients of $u$ is denoted by $\cD^s(u)$.

Let $0<p\le\infty$. The {\em homogeneous Haj\l asz space} $\dot{M}^{s,p}(X)$ consists of those measurable functions $u$ for which
\[
\|u\|_{\dot M^{s,p}(X)}=\inf_{g\in\mathcal{D}^s(u)}\|g\|_{L^p(X)}
\]
is finite.
The {\em Haj\l asz space} $M^{s,p}(X)$ is $\dot M^{s,p}(X)\cap L^p(X)$  equipped with the norm
\[
\|u\|_{M^{s,p}(X)}=\|u\|_{L^p(X)}+\|u\|_{\dot M^{s,p}(X)}.
\]
\end{definition}

Recall that for $p>1$, $M^{1,p}(\rn)=W^{1,p}(\rn)$, see \cite{H}, whereas for $n/(n+1)<p\le 1$, 
$M^{1,p}(\rn)$ coincides with the Hardy--Sobolev space $H^{1,p}(\rn)$ by \cite[Thm 1]{KS}.

\begin{definition}
Let $0<s<\infty$.
A sequence of nonnegative measurable functions $(g_k)_{k\in\z}$ is a  {\em fractional $s$-gradient} of a function
$u\in L^0(X)$, if there exists a set $E\subset X$ with $\mu(E)=0$ such that
\begin{equation}\label{frac grad}
|u(x)-u(y)|\le d(x,y)^s(g_k(x)+g_k(y))
\end{equation}
for all $k\in\z$ and all $x,y\in X\setminus E$ satisfying $2^{-k-1}\le d(x,y)<2^{-k}$.
The collection of all fractional $s$-gradients of $u$ is denoted by $\D^s(u)$.
\end{definition}

The next two lemmas for fractional $s$-gradients follow easily from the definition. 
The corresponding results for $1$-gradients have been proved in \cite[Lemmas 2.4]{KiMa} and \cite[Lemma 2.6]{KL}. 
We leave the proofs for the reader.

\begin{lemma}\label{max lemma}
Let $u,v\in L^0(X)$, $(g_k)_{k\in Z}\in\D^s(u)$ and $(h_k)_{k\in Z}\in\D^s(v)$. 
Then the sequence $(\max\{g_k,h_k\})_{k\in \z}$ is a fractional $s$-gradient of 
the functions $\max\{u,v\}$ and $\min\{u,v\}$.
\end{lemma}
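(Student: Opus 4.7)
The plan is to verify the defining pointwise inequality for the sequence $M_k := \max\{g_k, h_k\}$ by a straightforward case analysis on which of $u$ and $v$ attains the max (or min) at each of the two points $x$ and $y$. Let $E_u$ and $E_v$ be the exceptional sets from the definitions of $(g_k) \in \D^s(u)$ and $(h_k) \in \D^s(v)$, and set $E = E_u \cup E_v$; since $\mu(E)=0$, it suffices to work on $X\setminus E$.

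Fix $k\in\z$ and $x,y\in X\setminus E$ with $2^{-k-1}\le d(x,y)<2^{-k}$. If $\max\{u(x),v(x)\}$ and $\max\{u(y),v(y)\}$ are attained by the same function, say both by $u$, the estimate is immediate since $|u(x)-u(y)|\le d(x,y)^s(g_k(x)+g_k(y))\le d(x,y)^s(M_k(x)+M_k(y))$, and symmetrically if both are attained by $v$. The only genuine case is when the maxima are attained by different functions, say $\max\{u(x),v(x)\}=u(x)$ and $\max\{u(y),v(y)\}=v(y)$ (the opposite pairing being symmetric). If $u(x)\ge v(y)$, I use $v(y)\ge u(y)$ to write
\[
0\le u(x)-v(y)\le u(x)-u(y)\le d(x,y)^s\bigl(g_k(x)+g_k(y)\bigr)\le d(x,y)^s\bigl(M_k(x)+M_k(y)\bigr);
\]
if instead $u(x)<v(y)$, then $u(x)\ge v(x)$ yields
\[
0<v(y)-u(x)\le v(y)-v(x)\le d(x,y)^s\bigl(h_k(x)+h_k(y)\bigr)\le d(x,y)^s\bigl(M_k(x)+M_k(y)\bigr).
\]

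For $\min\{u,v\}$ one has two equally convenient routes: either repeat the above case analysis verbatim (the roles of the inequalities interchange, but the final bound is the same), or invoke the identity $\min\{u,v\}=-\max\{-u,-v\}$ together with the trivial observation that $(g_k)\in\D^s(-u)$ and $(h_k)\in\D^s(-v)$ because \eqref{frac grad} involves only absolute values. There is no real obstacle in this lemma; the only minor point requiring attention is gathering the null sets of $u$ and $v$ into a single exceptional set, which the union $E=E_u\cup E_v$ takes care of at the very start.
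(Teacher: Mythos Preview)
Your case analysis is correct and complete. The paper does not actually supply a proof of this lemma; it says the result ``follows easily from the definition'' and leaves it to the reader (pointing to \cite{KiMa} and \cite{KL} for the analogous $1$-gradient statement), so your argument is precisely the kind of verification the authors had in mind.
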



\begin{lemma}\label{sup lemma} 
Let $u_i\in L^0(X)$ and $(g_{i,k})_{k\in\z}\in \D^s(u_i)$, $i\in\n$. 
Let $u=\sup_{i\in \n} u_i$ and $(g_k)_{k\in\z}=(\sup_{i\in\n}g_{i,k})_{k\in\z}$.
If $u\in L^0(X)$, then $(g_k)_{k\in\z}\in \D^s(u)$.
\end{lemma}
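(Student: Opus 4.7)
The plan is to reduce the supremum to a single index by an $\varepsilon$-approximation and then invoke the fractional $s$-gradient inequality for that single index. Since the required inequality \eqref{frac grad} is symmetric in $x$ and $y$, I may assume without loss of generality that $u(x)\ge u(y)$.

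First I would construct a common exceptional null set. For each $i\in\n$, the hypothesis $(g_{i,k})_{k\in\z}\in\D^s(u_i)$ supplies a null set $E_i$ outside which $u_i$ is finite and \eqref{frac grad} holds for $u_i$ with $(g_{i,k})_k$ at every scale $k$ simultaneously. Since $u\in L^0(X)$, there is a further null set $F$ outside which $u$ is finite. Set $E=F\cup\bigcup_{i\in\n}E_i$, which remains a null set by countable subadditivity.

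Next I would verify \eqref{frac grad} for $u$ and $(g_k)_k$ on $X\setminus E$. Fix $x,y\in X\setminus E$ and $k\in\z$ with $2^{-k-1}\le d(x,y)<2^{-k}$, and assume $u(x)\ge u(y)$. For any $\varepsilon>0$, pick $i$ with $u_i(x)\ge u(x)-\varepsilon$; because $u(x)$ is finite, so is $u_i(x)$, and $u_i(y)$ is finite since $y\notin E_i$. Then $u_i(y)\le u(y)$ gives
\[
u(x)-u(y)\le u_i(x)-u_i(y)+\varepsilon\le d(x,y)^s\bigl(g_{i,k}(x)+g_{i,k}(y)\bigr)+\varepsilon\le d(x,y)^s\bigl(g_k(x)+g_k(y)\bigr)+\varepsilon,
\]
where the middle inequality is \eqref{frac grad} applied to $u_i$ (at the point $(x,y)$ and scale $k$), and the last uses $g_{i,k}\le g_k$ pointwise. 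Sending $\varepsilon\to 0$ yields the required bound.

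The only delicate point is making sure every quantity in the display is unambiguously defined when some $u_i$ could take the value $-\infty$. This is precisely why the exceptional set $E_i$ is enlarged to contain the points where $u_i$ is infinite, and why the index $i$ is chosen so that $u_i(x)$ approximates $u(x)$ from below: this choice forces $u_i(x)$ to be finite, and combined with $u_i(y)\in\re$ makes the difference $u_i(x)-u_i(y)$ meaningful. Beyond this modest bookkeeping the argument is merely a pointwise supremum trick together with countable subadditivity of null sets, and I do not expect any deeper obstacle.
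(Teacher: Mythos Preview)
Your argument is correct. The paper does not actually supply a proof of this lemma---it merely remarks that the result follows easily from the definition (with a reference to \cite{KL} for the $1$-gradient analogue) and leaves the details to the reader---so there is nothing to compare against; your $\varepsilon$-approximation of the supremum combined with the pointwise bound $g_{i,k}\le g_k$ is precisely the expected elementary verification.
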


For $0<p,q\le\infty$ and a sequence $(f_k)_{k\in\z}$ of measurable
functions, we define
\[
\big\|(f_k)_{k\in\z}\big\|_{L^p(X,\,l^q)}
=\big\|\|(f_k)_{k\in\z}\|_{l^q}\big\|_{L^p(X)}
\]
and
\[
\big\|(f_k)_{k\in\z}\big\|_{l^q(L^p(X))}
=\big\|\big(\|f_k\|_{L^p(X)}\big)_{k\in\z}\big\|_{l^q},
\]
where
\[
\big\|(f_k)_{k\in\z}\big\|_{l^{q}}
=
\begin{cases}
\big(\sum_{k\in\z}|f_{k}|^{q}\big)^{1/q},&\quad\text{if }0<q<\infty, \\
\;\sup_{k\in\z}|f_{k}|,&\quad\text{if }q=\infty.
\end{cases}
\]

\begin{definition}
Let $0<s<\infty$ and $0<p,q\le\infty$.
The  {\em homogeneous Haj\l asz--Triebel--Lizorkin space} $\dot M_{p,q}^s(X)$ consists of those functions
$u\in L^0(X)$, for which the (semi)norm
\[
\|u\|_{\dot M_{p,q}^s(X)}
=\inf_{(g_k)\in\D^s(u)}\|(g_k)\|_{L^p(X,\,l^q)}
\]
is finite. 
The  {\em Haj\l asz--Triebel--Lizorkin space} $M_{p,q}^s(X)$ is $\dot M_{p,q}^s(X)\cap L^p(X)$
equipped with the norm
\[
\|u\|_{M_{p,q}^s(X)}=\|u\|_{L^p(X)}+\|u\|_{\dot M_{p,q}^s(X)}.
\]
Notice that $M^s_{p,\infty}(X)=M^{s,p}(X)$, see \cite[Prop.\ 2.1]{KYZ} for a simple proof of this fact.

Similarly, the  {\em homogeneous Haj\l asz--Besov space} $\dot N_{p,q}^s(X)$ consists of those functions
$u\in L^0(X)$, for which
\[
\|u\|_{\dot N_{p,q}^s(X)}=\inf_{(g_k)\in\D^s(u)}\|(g_k)\|_{l^q(L^p(X))}
\]
is finite, and the {\em Haj\l asz--Besov space} $N_{p,q}^s(X)$ is $\dot N_{p,q}^s(X)\cap L^p(X)$
equipped with the norm
\[
\|u\|_{N_{p,q}^s(X)}=\|u\|_{L^p(X)}+\|u\|_{\dot N_{p,q}^s(X)}.
\]
\end{definition}
For $0<s<1$, $0<p,q\le \infty$, the spaces $N^s_{p,q}(\rn)$ and $M^s_{p,q}(\rn)$ coincide with the classical Besov and Triebel--Lizorkin spaces defined via differences ($L^p$-modulus of smoothness),  see \cite{GKZ}.

When $0<p<1$ or $0<q<1$, the (semi)norms defined above are actually quasi-(semi)norms, but for simplicity we call them, as well as other quasi-seminorms in this paper, just norms.

\subsection{Inequalities} 
We will frequently use the elementary inequality
\begin{equation}\label{elem ie}
\sum_{i\in\z} a_i\le \Big(\sum_{i\in\z} a_i^{\beta}\Big)^{1/\beta},
\end{equation}
which holds whenever $a_i\ge 0$ for all $i$ and $0<\beta\le 1$. 
\smallskip

By \eqref{elem ie}, if $0<p<1$ and by the Minkowski inequality, if $p\ge 1$, we have
\begin{equation}\label{norm ie}
\big\|\sum_{i\in\z} f_i\big\|_{L^p(X)}^{\tilde p}\le \sum_{i\in\z} \|f_i\|_{L^p(X)}^{\tilde p},
\end{equation}
where $\tilde p=\min\{1,p\}$.

The H\"older inequality and \eqref{elem ie} easily imply
the following lemma, which is used to estimate the norms of fractional gradients.

\begin{lemma}[\cite{HIT}, Lemma 3.1]\label{summing lemma}
Let $1<a<\infty$, $0<b<\infty$ and $c_k\ge 0$, $k\in\z$. There exists a constant $C=C(a,b)$ such that
\[
\sum_{k\in\z}\Big(\sum_{j\in\z}a^{-|j-k|}c_j\Big)^b\le C\sum_{j\in\z}c_j^b.
\]
\end{lemma}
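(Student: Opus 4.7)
The plan is to split into two cases according to whether $b\le 1$ or $b>1$, applying the elementary inequality \eqref{elem ie} in the first case and H\"older's inequality in the second. In both cases, the final step is a Fubini-type interchange of summation, after which the inner sum over $k$ reduces to a geometric series in $a^{-1}$ (or $a^{-b}$) that is finite precisely because $a>1$.

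For the case $0<b\le 1$, I would apply \eqref{elem ie} with $\beta=b$ to the inner sum, obtaining
\[
\Big(\sum_{j\in\z} a^{-|j-k|}c_j\Big)^{b}
\le \sum_{j\in\z} a^{-|j-k|b}\, c_j^{b}.
\]
Summing over $k$ and interchanging the order of summation,
\[
\sum_{k\in\z}\Big(\sum_{j\in\z} a^{-|j-k|}c_j\Big)^{b}
\le \sum_{j\in\z} c_j^{b}\sum_{k\in\z}a^{-|j-k|b}
= \frac{a^{b}+1}{a^{b}-1}\sum_{j\in\z}c_j^{b},
\]
since translation invariance reduces the inner sum to $\sum_{m\in\z}a^{-|m|b}$, which is a finite geometric series as $a^{b}>1$.

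For the case $b>1$, the natural move is to split the weight as $a^{-|j-k|}=a^{-|j-k|/b'}\cdot a^{-|j-k|/b}$ with $b'=b/(b-1)$, and apply H\"older's inequality in the $j$-variable:
\[
\sum_{j\in\z} a^{-|j-k|}c_j
\le \Big(\sum_{j\in\z} a^{-|j-k|}\Big)^{1/b'}\Big(\sum_{j\in\z} a^{-|j-k|}c_j^{b}\Big)^{1/b}.
\]
The first factor is bounded by a constant $C(a,b)$, so raising to the $b$-th power and summing over $k$ gives, again by interchange of summation, the bound $C(a,b)\sum_j c_j^b$.

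I don't expect a real obstacle here: the only point that requires a small amount of thought is choosing the correct splitting of the weight in the H\"older step so that the residual weight $a^{-|j-k|}$ in the bracket still has summable tails in both variables. Everything else is bookkeeping with geometric series and depends only on $a>1$ and $b>0$, which gives the constant $C=C(a,b)$ claimed in the statement.
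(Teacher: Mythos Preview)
Your proof is correct and follows exactly the approach indicated in the paper, which states that the H\"older inequality and the elementary inequality \eqref{elem ie} easily imply the lemma. The case split at $b=1$, with \eqref{elem ie} for $b\le 1$ and H\"older (with the weight splitting $a^{-|j-k|}=a^{-|j-k|/b'}a^{-|j-k|/b}$) for $b>1$, followed by a Fubini interchange and summation of a geometric series, is precisely what the paper has in mind.
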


The Fefferman--Stein vector-valued maximal theorem (see \cite{FS}, \cite{GLY}, \cite{Sa}) states that, for $1<p,q\le\infty$,
there exists a constant $C=C(c_d,p,q)$ such that
\begin{equation}\label{Fefferman-Stein}
\|(\M u_k)\|_{L^p(X;l^q)}\le C\|(u_k)\|_{L^p(X;l^q)}
\end{equation}
for every $(u_k)_{k\in\z}\in L^p(X;l^q)$. Here $\M$ is the usual Hardy--Littlewood maximal operator.

\subsection{$\gamma$-median} 
Recall from the introduction that for $0<\gamma\le 1/2$, the $\gamma$-median of a measurable function
$u\colon X\to\rv$ over a set $A$ of finite measure is
\[
m_u^\gamma(A)=\inf\big\{a\in\re: \mu(\{x\in A: u(x)>a\})< \gamma\mu(A)\big\},
\]
and note that if $u\in L^0(A)$ and $0<\mu(A)<\infty$, then $m_u^\gamma(A)$ is finite.

In the following lemma, we list some basic properties of the $\gamma$-median. 
We leave the quite straightforward proof for the reader, who can also look at \cite{PT} where most of the properties are proved in the Euclidean setting. Properties (a), (b), (d), (f), (g) and (h) follow from \cite[Propositions 1.1 and 1.2]{PT} and (i) and (j) from \cite[Theorem 2.1]{PT}. The remaining properties (c) and (e) follow immediately from the definition.

\begin{lemma}\label{median lemma} 
The $\gamma$-median has the following properties:
\begin{itemize}
\item[(a)] If $\gamma\le\gamma'$, then $m_{u}^{\gamma}(A)\ge m_{u}^{\gamma'}(A)$.
\item[(b)] If $u\le v$ almost everywhere, then  $m_{u}^{\gamma}(A)\le m_{v}^{\gamma}(A)$.
\item[(c)] If $A\subset B$ and $\mu(B)\le C\mu(A)$, then $m_{u}^{\gamma}(A)\le m_{u}^{\gamma/C}(B)$.
\item[(d)] If $c\in\mathbb{R}$, then $m_u^\gamma(A)+c=m_{u+c}^\gamma(A)$.
\item[(e)] If $c>0$, then $m_{c\,u}^\gamma(A)=c\,m_{u}^\gamma(A)$.
\item[(f)] $|m_{u}^\gamma(A)|\le m_{|u|}^\gamma(A)$. 
\item[(g)] $m_{u+v}^\gamma(A)\le m_{u}^{\gamma/2}(A)+m_{v}^{\gamma/2}(A)$.
\item[(h)] For every $p>0$, 
\[
m_{|u|}^\gamma(A)\le \Big(\gamma^{-1}\vint{A}|u|^p\,d\mu\Big)^{1/p}.
\]
\item[(i)] If $u$ is continuous, then for every $x\in X$,
\[
\lim_{r\to 0} m_{u}^\gamma(B(x,r))=u(x).
\]
\item[(j)]  If $u\in L^0(X)$, then 
there exists a set $E$ with $\mu(E)=0$ such that \[\lim_{r\to 0} m_{u}^\gamma(B(x,r))=u(x)\]
 for every $0<\gamma\le 1/2$ and $x\in X\setminus E$. 
\end{itemize}
\end{lemma}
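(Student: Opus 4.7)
The strategy is to note that properties (a)--(h) are almost unpacking of the definition, whereas (i) and (j) reduce to well-known a.e.\ convergence statements once a Chebyshev-type bound is available. My plan is to treat them in small groups.

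For (a)--(e) I would just chase the definition. Since $\gamma\le\gamma'$ shrinks the admissible set $\{a\in\re:\mu(\{u>a\}\cap A)<\gamma\mu(A)\}$, its infimum can only increase, giving (a). Property (b) is immediate because $\{u>a\}\subset\{v>a\}$ up to a null set. For (c), if $\mu(\{u>a\}\cap B)<(\gamma/C)\mu(B)\le \gamma\mu(A)$, then the same inequality holds with $B$ replaced by $A\subset B$. Properties (d) and (e) follow from the identities $\{u+c>a\}=\{u>a-c\}$ and $\{cu>a\}=\{u>a/c\}$ (for $c>0$).

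Properties (f) and (g) both rest on the subadditivity trick $\{u+v>s+t\}\subset\{u>s\}\cup\{v>t\}$. For (g), if $s>m_u^{\gamma/2}(A)$ and $t>m_v^{\gamma/2}(A)$, then $\mu(\{u+v>s+t\}\cap A)<\gamma\mu(A)$, whence $m_{u+v}^\gamma(A)\le s+t$; letting $s,t$ decrease gives the bound. For (f) one applies (b) to $-|u|\le u\le |u|$ and then combines the two sides using the fact that $m_{-u}^\gamma(A)=-m_u^{1-\gamma}(A)$ (or more simply, bounds $m_u^\gamma(A)$ and $-m_u^\gamma(A)$ separately by $m_{|u|}^\gamma(A)$ via monotonicity). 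Property (h) is just Chebyshev: if $t^p>\gamma^{-1}\vint{A}|u|^p\,d\mu$, then
\[
\frac{\mu(\{|u|>t\}\cap A)}{\mu(A)}\le\frac{1}{t^p\mu(A)}\int_A|u|^p\,d\mu<\gamma,
\]
so every such $t$ lies in the admissible set and thus exceeds $m_{|u|}^\gamma(A)$; taking the infimum gives the claim.

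For (i), given $\eps>0$ continuity yields $r_0$ with $u(x)-\eps<u<u(x)+\eps$ on $B(x,r)$ for $r<r_0$, and then (b) together with the trivial medians of constants pinches $m_u^\gamma(B(x,r))$ in $[u(x)-\eps,u(x)+\eps]$. The main work is in (j). My plan is to reduce to the Lebesgue differentiation theorem via truncation: for $N\in\n$ set $u_N=\max\{-N,\min\{N,u\}\}\in L^\infty_{\rm loc}$. Applying (h) with $p=1$ to $u-u_N$ and using the standard Lebesgue differentiation theorem on the integrable function $|u-u_N|$ shows that $m_{|u-u_N|}^{\gamma/2}(B(x,r))\to 0$ as $r\to 0$ outside a null set $E_N$; similarly $m_{u_N}^{\gamma/2}(B(x,r))\to u_N(x)$ a.e.\ (this is the bounded case, which one handles by approximating $u_N$ by continuous functions via Lusin's theorem and using (g) and (h) to control the error, or by direct Lebesgue differentiation on $\{u_N>a\}$). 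Then (g) combines these to give $\lim_{r\to 0} m_u^\gamma(B(x,r))=u(x)$ off $\bigcup_N E_N$, which is still a null set; because the admissible-set characterization gives us the freedom to pick any $0<\gamma\le 1/2$ on a countable dense subfamily, one can finally exclude a further null set to make the statement hold simultaneously for all $\gamma\in(0,1/2]$ (using monotonicity (a)). The main obstacle is this last step for unbounded $u$: the truncation argument must be carried out carefully because $u$ need not be locally integrable, so one cannot directly invoke Lebesgue differentiation on $u$ itself---only on $|u-u_N|$ and on $u_N$, with (g) and (h) bridging the two.
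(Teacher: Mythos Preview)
Your treatment of (a)--(i) is fine and matches what the paper intends (the paper does not give details, merely citing \cite{PT} and leaving the rest to the reader). The gap is in (j). You write that $|u-u_N|$ is locally integrable and then invoke Lebesgue differentiation on it, but this is false in general: if $u\in L^0(X)$ is not locally integrable (which the lemma explicitly allows), then neither is $|u-u_N|=(|u|-N)_+$. For instance, with $u(x)=1/x$ on $(0,1]$ one has $\int_0^{1/N}(1/x-N)\,dx=\infty$ for every $N$. So the step ``apply (h) with $p=1$ and Lebesgue differentiation to $|u-u_N|$'' does not go through, and your closing remark that one can rely on $|u-u_N|$ but not on $u$ itself is exactly backwards.

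The clean fix is to bypass integrability entirely and use the Lebesgue density theorem (valid for doubling measures) on level sets. For each rational $q$, almost every point of $\{u>q\}$ is a density point and almost every point of its complement is a density-zero point; call the common exceptional null set $F$. For $x\notin F$ with $|u(x)|<\infty$, pick rationals $q_1<u(x)<q_2$ within $\eps$ of $u(x)$: density at $x$ of $\{u>q_1\}$ forces $\mu(\{u>q_1\}\cap B(x,r))\ge\gamma\mu(B(x,r))$ for small $r$ (so $m_u^\gamma(B(x,r))\ge q_1$), and density zero of $\{u\ge q_2\}$ forces $\mu(\{u>q_2\}\cap B(x,r))<\gamma\mu(B(x,r))$ (so $m_u^\gamma(B(x,r))\le q_2$). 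This gives the convergence for every $0<\gamma\le 1/2$ simultaneously, with a single exceptional set independent of $\gamma$, which also obviates your final density-in-$\gamma$ argument. This is essentially the argument behind \cite[Theorem~2.1]{PT}, which is what the paper cites for (j). Alternatively, within your truncation scheme you can replace the failed step by observing that $\{|u-u_N|>0\}\subset\{|u|>N\}$ has density zero at a.e.\ $x$ with $|u(x)|<N$, whence $m_{|u-u_N|}^{\gamma/2}(B(x,r))=0$ for small $r$ directly from the definition, without any integrability.
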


Property (j) above says that medians over small balls behave like 
integral averages of locally integrable functions at Lebesgue points. 

\begin{definition}
Let $u\in L^0(A)$. 
A point $x$ is a \emph{generalized Lebesgue point} of $u$, if 
\[
\lim_{r\to 0} m_{u}^\gamma(B(x,r))=u(x)
\]
for all $0<\gamma\le1/2$.
\end{definition}

\subsection{Maximal functions}
We use two maximal operators defined using medians. The first one is defined 
in the usual way by taking a supremum of medians over balls and the 
second one is a discrete version defined using discrete convolutions.

\begin{definition}
Let $0<\gamma\le1/2$. 
The \emph{$\gamma$-median maximal function} of a function $u\in L^0(X)$ is $\M^\gamma u\colon X\to\rv$,
\[
\M^\gamma u(x)=\sup_{r>0}\,m_{|u|}^\gamma(B(x,r)).
\]
\end{definition}
Note that by Lemma \ref{median lemma} (h) and (j), for any $p>0$,
\begin{equation}\label{M iso}
u(x)\le \M^\gamma u(x)\le \big(\gamma^{-1}\M u^p(x)\big)^{1/p} 
\end{equation}
for almost all $x\in X$. 
It follows from the Hardy--Littlewood maximal theorem that, for any $p>0$, there exists a constant $C=C(\gamma,p,c_d)$ such that
\begin{equation}\label{median maximal operator bounded in Lp}
\|\M^\gamma u\|_{L^p(X)}\le C\|u\|_{L^p(X)},
\end{equation}
for every $u\in L^p(X)$. 
More generally, \eqref{M iso} and the Fefferman--Stein maximal theorem, \eqref{Fefferman-Stein}, imply that, for all  $0<p,q\le\infty$, there exists a constant $C=C(\gamma,p,q,c_d)$ such that
\begin{equation}\label{Lplq boundedness of median maximal op}
\|(\M^\gamma  u_k)\|_{L^p(X;l^q)}\le C\|(u_k)\|_{L^p(X;l^q)}
\end{equation}
for every $(u_k)_{k\in\z}\in L^p(X;l^q)$.
\medskip

For the discrete maximal function, we first recall a definition of a discrete convolution. 
Discrete convolutions are basic tools in harmonic analysis in homogeneous spaces, see for example \cite{CW} and \cite{MS}.
The discrete maximal function, which can be seen as a smooth version of the Hardy--Littlewood maximal function, was introduced in \cite{KL}. 

Let $r>0$ and let $\{B_i\}_{i\in I}$, where $I\subset\n$, be a covering of $X$ by balls of radius $r$ such that
\[
\sum_{i\in I}\ch{2B_i}\le C(c_d).
\]
For such a covering, there exist $C/r$-Lipschitz functions $\ph_i$, $i\in I$, and a constant $C=C(c_d)$ such that $0\le\ph_i\le 1$, 
$\ph_i=0$ outside $2B_i$ and $\ph_i\ge C^{-1}$ on $B_i$ for all $i$ and $\sum_{i\in I}\ph_i=1$. 
Such a collection of functions is called a \emph{partition of unity} subordinate to the covering $\{B_i\}_{i\in I}$.
A \emph{discrete convolution} of a locally integrable function $u$ at the scale $r$  is
\begin{equation}\label{discrete convolution}
u_r=\sum_{i\in I}u_{B_i}\ph_i
\end{equation}
and the \emph{discrete  maximal function} of $u$ is $\M^* u\colon X\to\rv$,
\[
\M^*u(x)=\sup_{k\in\z}|u|_{2^k}(x).
\]

Similarly, we define median versions of a discrete convolution and a discrete maximal function. 
Below, balls $B_i$ and functions $\ph_i$ are as above.
\begin{definition}
Let $0<\gamma\le1/2$. 
A \emph{discrete $\gamma$-median convolution} of a function $u\in L^0(X)$ at scale $r>0$ is
\[
u_r^\gamma=\sum_{i\in I}m_{u}^\gamma(B_i)\ph_i,
\]
and 
the \emph{discrete $\gamma$-median maximal function} of $u$ is $\M^{\gamma,*} u\colon X\to\rv$,
\[
\M^{\gamma,*}u(x)=\sup_{k\in\z}|u|_{2^k}^\gamma(x).
\]
\end{definition}
Notice that taking the usual discrete convolution is a linear operation in the sense that
\[
(u+v)_r=u_r+v_r,
\]
but the discrete median convolutions only satisfy
\[
(u+v)_r^\gamma\le u_r^{\gamma/2}+v_r^{\gamma/2}.
\]

By  \cite[Lemma 3.1]{KL}, there exists a constant $C=C(c_d)$ such that
\[
C^{-1}\M u\le \M^*u\le C\M u.
\]
Similarly, it is easy to see that there is a constant $C>0$ such that
\begin{equation}\label{comparability for median maximal functions}
\M^{\gamma}u\le C\M^{\gamma/C,*}u\le C^2\M^{\gamma/C^2}u.
\end{equation}

As suprema of continuous functions, the discrete maximal functions are lower semicontinuous and hence measurable.

\begin{remark}\label{finiteness remark} 
If $u\in L^0(X)$, then $\M^\gamma u$ may be infinite at every point. 
However, if $\M^\gamma u(x_0)<\infty$ at some point $x_0\in X$, then $\M^\gamma u<\infty$ almost everywhere. 
Indeed, at almost every point, $\lim_{r\to 0}m_{|u|}^\gamma(B(x,r))=|u(x)|<\infty,$ which implies that there exists $r_0>0$ such that $m_{|u|}^\gamma(B(x,r))\le |u(x)|+1$ for $r<r_0$. On the other hand, if $r\ge r_0$, then $B(x,r)\subset B(x_0,r+d(x,x_0))$ and there exists a constant $C=C(c_d,r_0,d(x,x_0))$ such that $\mu(B(x_0,r+d(x,x_0)))\le C\mu(B(x,r))$, which implies that
$m_{|u|}^\gamma(B(x,r))\le\M^{\gamma/C}u(x_0).$
\end{remark}

\section{Approximation by discrete median convolutions}\label{sec: appr by med convolutions}

In this section, we prove Theorem \ref{density of loc lip} in the setting of a doubling metric space without any additional assumption on the measure. For the proof, we need a couple of lemmas including a Leibniz type rule for fractional $s$-gradients, a Sobolev-Poincar\'e type inequality and a pointwise estimate for $\gamma$-medians. 

\begin{lemma}\label{lemma 1} 
Let $0<s\le1$ and let $S\subset X$ be a measurable set.
Let $u\colon X\to\re$ be a measurable function with $(g_k)_{k\in\z}\in \D^s(u)$ and
let $\ph$ be a bounded $L$-Lipschitz function supported in $S$.
Let $i\in\z$. Then the sequence $(h_k)_{k\in\z}$, where
\begin{equation}\label{fracgrad}
h_{k}=
\begin{cases}
\big(\|\varphi\|_{\infty} g_k+L2^{k(s-1)}|u|\big)\ch{S}, &\text{ when } k>i\\
  2^{(k+1)s}\|\varphi\|_{\infty}|u|\ch{S}, &\text{ when } k\le i
\end{cases},
\end{equation}
is a fractional $s$-gradient of function $u\varphi$.
\end{lemma}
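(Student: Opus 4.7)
The plan is to verify the defining inequality for a fractional $s$-gradient directly, scale by scale, for the sequence $(h_k)$ and the function $u\varphi$. Fix $k\in\z$ and points $x,y$, outside the null set where \eqref{frac grad} fails for $(g_k)\in\D^s(u)$, with $2^{-k-1}\le d(x,y)<2^{-k}$. The natural starting point is the product decomposition
\[
u(x)\varphi(x)-u(y)\varphi(y)=\varphi(y)\bigl(u(x)-u(y)\bigr)+u(x)\bigl(\varphi(x)-\varphi(y)\bigr),
\]
which separates the increment of $u\varphi$ into a piece handled by the fractional $s$-gradient of $u$ and a piece handled by the regularity of $\varphi$. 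Since $\varphi$ is supported in $S$, the inequality is trivial when $x,y\notin S$, and by symmetry it suffices to treat the case $x\in S$ (the case $x,y\in S$ being then obtained by swapping roles).

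For the first summand I use $|\varphi(y)|\le\|\varphi\|_\infty$ together with the defining inequality of $\D^s(u)$ to obtain the bound $\|\varphi\|_\infty d(x,y)^s(g_k(x)+g_k(y))$; this accounts for the $\|\varphi\|_\infty g_k$ contribution to $h_k$ in the range $k>i$. For the second summand, the cutoff index $i$ is used to choose the sharper of the two available estimates of $|\varphi(x)-\varphi(y)|$ at each scale. For $k>i$ (small distances), the Lipschitz bound $L\,d(x,y)$ combined with $d(x,y)<2^{-k}$ and $s\le 1$ gives a term of the form $L\,2^{k(s-1)}d(x,y)^s|u(x)|$, matching the second term of the $k>i$ formula for $h_k$. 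For $k\le i$ (large distances), I instead bound the whole left-hand side crudely by $\|\varphi\|_\infty\bigl(|u(x)|\ch{S}(x)+|u(y)|\ch{S}(y)\bigr)$ and use $d(x,y)^s\ge 2^{-(k+1)s}$ to absorb this into $d(x,y)^s\cdot 2^{(k+1)s}\|\varphi\|_\infty|u|\ch{S}$, which is precisely the $k\le i$ formula for $h_k$; no $g_k$ term is required in this range, which is why it is absent from the definition.

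The main care lies in the bookkeeping: recording the $\ch{S}$ factors correctly so that contributions arise only at the $S$-points where $\varphi$ is nonzero, and matching the bound on $|\varphi(x)-\varphi(y)|$ to the scale $2^{-k}$. There is no genuine analytic obstacle here; the lemma is essentially a Leibniz-type identity in which the cutoff index $i$ is introduced precisely to interpolate between the Lipschitz and supremum estimates of $\varphi$ at the critical scale $2^{-i}$.
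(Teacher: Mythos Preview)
Your argument is correct and follows essentially the same approach as the paper: verify the fractional $s$-gradient inequality for $u\varphi$ scale by scale, using the Leibniz decomposition and the Lipschitz bound on $\varphi$ for $k>i$, and the crude supremum bound $|u\varphi(x)-u\varphi(y)|\le \|\varphi\|_\infty(|u(x)|\ch{S}(x)+|u(y)|\ch{S}(y))$ together with $d(x,y)\ge 2^{-k-1}$ for $k\le i$. The only difference is cosmetic: the paper outsources the $k>i$ verification to \cite[Lemma~3.10]{HIT}, whereas you write out the product-rule computation directly; the $k\le i$ part is identical in both.
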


\begin{proof}
By \cite[Lemma 3.10]{HIT}, the sequence $(\hat h_k)_{k\in\z}$, where
\[
\hat h_k=\big(\|\varphi\|_{\infty} g_k+L2^{k(s-1)}|u|\big)\ch{S}
\]
is a fractional $s$-gradient of $u\varphi$.
On the other hand, by denoting 
\[
\tilde h_k=2^{(k+1)s}\|\varphi\|_{\infty}|u|\ch{S},
\]
we have
\[
\begin{split}
|(u\varphi)(x)-(u\varphi)(y)|
&\le \|\varphi\|_{\infty}|u(x)|\ch{S}(x)+\|\varphi\|_{\infty}|u(y)|\ch{S}(y)\\
&\le d(x,y)^s\big(\tilde h_k(x)+\tilde h_k(y)\big),
\end{split}
\]
when $2^{-k-1}\le d(x,y)<2^{-k}$, which implies that $(\tilde h_k)_{k\in\z} \in\D^s(u\varphi)$.
Thus, for every $i\in\z$, $(h_k)_{k\in\z} \in\D^s(u\varphi)$.
\end{proof}

Notice that $\tilde h_k< \hat h_k$, when $2^{-k}>2\|\varphi\|_\infty/L$. 
Hence we usually choose $i$ in \eqref{fracgrad}  such that $2^{-i}\approx\|\varphi\|_\infty/L$.
\medskip

The definition of fractional $s$-gradients implies the validity of various Poincar\'e type and pointwise inequalities. 
Our next lemma follows from the Sobolev--Poincar\'e inequality formulated in terms of integral averages in 
\cite[Lemma 2.1]{GKZ} and Lemma \ref{median lemma} (h).

\begin{lemma}\label{med poincare}
Let $0<\gamma\le 1/2$ and $0<s,t<\infty$. 
There exist constants $0<s'<s$ and $C>0$ such that
\begin{equation}\label{eq: med poincare}
\inf_{c\in \re}\ m^\gamma_{|u-c|}(B(x,2^{-k}))
\le C2^{-ks}\sum_{l\ge k-2}2^{(k-l)s'}
\Big(\,\vint{B(x,2^{-k+1})}g_l^{t}\,d\mu\Big)^{1/t}
\end{equation}
for all $u\in L^0(X)$, $(g_k)_k\in\D^s(u)$, $x\in X$ and $k\in\z$. 
\end{lemma}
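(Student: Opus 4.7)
The plan is to reduce the statement to the integral-form Sobolev--Poincar\'e inequality of \cite[Lemma 2.1]{GKZ} via the median vs.\ $L^p$-average estimate of Lemma \ref{median lemma} (h).

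First, I would apply \cite[Lemma 2.1]{GKZ} on the ball $B=B(x,2^{-k})$ with some exponent $p>0$ admissible for that lemma (any sufficiently large $p$, e.g.\ $p=\max\{1,t\}$, will do). This supplies constants $0<s'<s$ and $C>0$, together with a real number $c=c_{k,x}\in\re$ (the integral average $u_B$ in the locally $L^p$-integrable case), such that
\[
\Big(\vint{B}|u-c|^p\,d\mu\Big)^{1/p}\le C\,2^{-ks}\sum_{l\ge k-2}2^{(k-l)s'}\Big(\vint{2B}g_l^t\,d\mu\Big)^{1/t}.
\]

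Second, by Lemma \ref{median lemma} (h) applied to $|u-c|$ with the same exponent $p$,
\[
m^\gamma_{|u-c|}(B)\le \Big(\gamma^{-1}\vint{B}|u-c|^p\,d\mu\Big)^{1/p}.
\]
Chaining the two displays, absorbing $\gamma^{-1/p}$ into $C$, and taking the infimum over $c\in\re$ on the left of \eqref{eq: med poincare} (with $c=c_{k,x}$ a valid choice) yields \eqref{eq: med poincare}.

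The only mild obstacle is that $u\in L^0(X)$ need not be locally $L^p$-integrable, which is what the first step demands. I would circumvent this by truncating $u$ to $u_N=\max\{-N,\min\{u,N\}\}\in L^\infty(X)$, which inherits $(g_k)$ as a fractional $s$-gradient since $|u_N(x)-u_N(y)|\le|u(x)-u(y)|$. Running the argument for $u_N$ and letting $N\to\infty$, one recovers the estimate for $u$ using the monotonicity properties (a)--(b) of Lemma \ref{median lemma} on the left, and observing that the right-hand side of \eqref{eq: med poincare} is independent of $N$. If the right-hand side is infinite the claim is trivial, so we may assume it finite, in which case this truncation plus the preceding chain of inequalities yields \eqref{eq: med poincare}.
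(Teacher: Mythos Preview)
Your approach is exactly the one the paper indicates: combine the integral Sobolev--Poincar\'e inequality of \cite[Lemma 2.1]{GKZ} with the median--average comparison of Lemma \ref{median lemma} (h). One small correction: in the Sobolev--Poincar\'e step the admissible left-hand exponent $p$ is bounded \emph{above} (by the Sobolev exponent associated with $t$, which is $<1$ when $t<Q/(Q+s)$), so you should take e.g.\ $p=t$ rather than ``any sufficiently large $p$, e.g.\ $p=\max\{1,t\}$''; with that adjustment your sketch, including the truncation to accommodate $u\in L^0(X)$, goes through.
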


Notice that by Lemma \ref{median lemma} (b), (d) and (f),
\begin{equation}\label{med quasimin}
m^\gamma_{|u-m^\gamma_u(A)|}(A) \le 2\inf_{c\in \re}\ m^\gamma_{|u-c|}(A)
\end{equation}
whenever $0<\mu(A)<\infty$ and $u\in L^0(A)$.
\medskip

Using a chaining argument and \eqref{eq: med poincare}, we obtain the following pointwise estimate.

\begin{lemma}\label{med pointwise} 
Let $0<\gamma\le 1/2$ and $0<s, t<\infty$. There exist constants $0<s'<s$ and $C>0$ such that
\[
|u(x)-m^\gamma_u(B(y,2^{-i}))|\le C2^{-is}\sum_{l>i-5}2^{(i-l)s'}(\M g_l^t(x))^{1/t}
\]
for all $u\in L^0(X)$, $(g_k)_k\in \D^s(u)$,
$y\in X$, $i\in\z$ and almost every $x\in B(y,2^{-i+1})$.
\end{lemma}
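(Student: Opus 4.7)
The plan is to use a telescoping chain of $\gamma$-medians along dyadically shrinking balls centered at $x$, combined with the median Poincaré inequality of Lemma~\ref{med poincare} at each scale, and the observation that averages over $B(x,2^{-k+1})$ are controlled by the maximal function at $x$.

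Fix a point $x\in B(y,2^{-i+1})$ which is a generalized Lebesgue point of $u$; the complement has measure zero by Lemma~\ref{median lemma}(j). Set $B_k=B(x,2^{-k})$ and $c_k=m^\gamma_u(B_k)$. Since $d(x,y)+2^{-i}<2^{-i+2}$, we have $B(y,2^{-i})\subset B_{i-2}$ and $\mu(B_{i-2})\le C\mu(B(y,2^{-i}))$ by doubling. Telescoping gives
\[
u(x)-m^\gamma_u(B(y,2^{-i}))=\sum_{k=i-2}^{\infty}(c_{k+1}-c_k)+\bigl(c_{i-2}-m^\gamma_u(B(y,2^{-i}))\bigr),
\]
using $c_k\to u(x)$ by Lemma~\ref{median lemma}(j).

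For each increment, subtract a constant $c\in\re$ and use property (d) to write $c_{k+1}-c_k=m^\gamma_{u-c}(B_{k+1})-m^\gamma_{u-c}(B_k)$. By (f) the absolute value of each term is bounded by the median of $|u-c|$ over the respective ball; by (c) combined with the doubling bound $\mu(B_k)\le C\mu(B_{k+1})$, both medians are bounded by $m^{\gamma/C}_{|u-c|}(B_k)$, so
\[
|c_{k+1}-c_k|\le 2\,m^{\gamma/C}_{|u-c|}(B_k).
\]
Taking the infimum over $c$ and invoking Lemma~\ref{med poincare} (with $\gamma$ replaced by $\gamma/C$) yields
\[
|c_{k+1}-c_k|\le C\,2^{-ks}\sum_{l\ge k-2}2^{(k-l)s'}\Bigl(\vint{2B_k}g_l^{t}\,d\mu\Bigr)^{1/t}\le C\,2^{-ks}\sum_{l\ge k-2}2^{(k-l)s'}(\M g_l^{t}(x))^{1/t},
\]
the last inequality because $x\in 2B_k$. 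The remainder $|c_{i-2}-m^\gamma_u(B(y,2^{-i}))|$ is estimated the same way, using $B(y,2^{-i})\subset B_{i-2}$ and the doubling inclusion to reduce to $\inf_c m^{\gamma/C}_{|u-c|}(B_{i-2})$ and applying Lemma~\ref{med poincare} once.

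Summing over $k\ge i-2$, setting $a_l=(\M g_l^{t}(x))^{1/t}$, and exchanging the order of summation gives
\[
\sum_{k=i-2}^{\infty}2^{-ks}\sum_{l\ge k-2}2^{(k-l)s'}a_l=\sum_{l\ge i-4}a_l\sum_{k=i-2}^{l+2}2^{-ls'}\,2^{k(s'-s)}.
\]
Because $s'<s$ the inner geometric series is dominated by its smallest-$k$ term at $k=i-2$, which contributes a factor comparable to $2^{-is}\cdot 2^{(i-l)s'}$. This produces the asserted bound, with the range $l\ge i-4$ (i.e.\ $l>i-5$) emerging from the constraint $l\ge k-2\ge i-4$.

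The main obstacle is that $\gamma$-medians are not linear, so the usual ``subtract the mean'' trick is implemented through properties (c), (d), (f), each of which costs a constant factor in the parameter $\gamma$. The key technical point is that this shrinkage from $\gamma$ to $\gamma/C$ happens \emph{once} per chain step (and is absorbed by Lemma~\ref{med poincare}, which holds for every $\gamma\in(0,1/2]$), rather than compounding along the chain; otherwise the $\gamma$ would degenerate with $i\to\infty$ and the pointwise estimate would fail.
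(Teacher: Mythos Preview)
Your proof is correct and follows essentially the same approach as the paper: a telescoping chain of medians along dyadic balls centered at $x$, the median Poincar\'e inequality (Lemma~\ref{med poincare}) at each scale, and a geometric summation after exchanging the order of summation. The only cosmetic difference is that the paper subtracts the specific constant $m^\gamma_u(B_k)$ and then invokes the quasiminimality \eqref{med quasimin}, whereas you subtract an arbitrary constant and take the infimum directly; these yield the same bound.
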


\begin{proof}
Let $x\in B(y,2^{-i+1})$ be a generalized Lebesgue point of $u$. 
Then, by Lemma \ref{median lemma} (j), (d), (f), (c), \eqref{med quasimin} and \eqref{eq: med poincare}, we obtain
\[
\begin{split}
|u(x)-m^\gamma_u(B(x,2^{-i+2}))|&\le\sum_{k>i-3}|m^\gamma_u(B(x,2^{-k-1}))-m^\gamma_u(B(x,2^{-k}))|\\
&\le \sum_{k>i-3}\,m^\gamma_{|u-m^\gamma_u(B(x,2^{-k}))|}(B(x,2^{-k-1}))\\
&\le C\sum_{k>i-3}\,m^{\gamma/C}_{|u-m^\gamma_u(B(x,2^{-k}))|}(B(x,2^{-k}))\\
&\le C\sum_{k>i-3}2^{-ks}\sum_{l\ge k-2}2^{(k-l)s'}
\Big(\,\vint{B(x,2^{-k+1})}g_l^{t}\,d\mu\Big)^{1/t}\\
&\le C\sum_{l>i-5}2^{-ls'}(\M g_l^t(x))^{1/t} \sum_{i-3<k\le l+2}2^{k(s'-s)}\\
&\le C2^{-is}\sum_{l>i-5}2^{(i-l)s'}(\M g_l^t(x))^{1/t}.
\end{split}
\]
Since $B(y,2^{-i})\subset B(x,2^{-i+2})$,  Lemma \ref{median lemma} (d), (f), (c), \eqref{med quasimin} and \eqref{eq: med poincare} imply that
\[
\begin{split}
|m^\gamma_u(B(y,2^{-i}))-m^\gamma_u(B(x,2^{-i+2}))|
&\le Cm^{\gamma/C}_{|u-u_{B(x,2^{-i+2})}|}(B(x,2^{-i+2}))\\
&\le C2^{-is}\sum_{l> i-5}2^{(i-l)s'}
\Big(\,\vint{B(x,2^{-i+3})}g_l^{t}\,d\mu\Big)^{1/t}\\
&\le C2^{-is}\sum_{l> i-5}2^{(i-l)s'}
(\M g_l^t(x))^{1/t}.
\end{split}
\]
The claim follows by combining the above estimates.
\end{proof}

\begin{lemma}\label{lemma 3} 
Let $0<p,q,\delta<\infty$ and $i\in\z$.
\begin{itemize}
\item[a)] If $0<t<p$, then
\[
\Big\|\sum_{l>i}2^{-(l-i)\delta}(\M g_l^t)^{1/t}\Big\|_{L^p(X)}
\le C\Big(\sum_{l>i}\|g_l\|_{L^p(X)}^q\Big)^{1/q}.
\]
\item[b)] If $0<t<\min\{p,q\}$, then
\[
\Big\|\sum_{l>i}2^{-(l-i)\delta}(\M g_l^t)^{1/t}\Big\|_{L^p(X)}
\le C\Big\|\Big(\sum_{l>i}g_l^q\Big)^{1/q}\Big\|_{L^p(X)}.
\]
\end{itemize}
\end{lemma}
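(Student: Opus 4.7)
The plan is to set $G_l=(\M g_l^t)^{1/t}$ and $a_l=2^{-(l-i)\delta}$, and prove both (a) and (b) by reducing to a maximal inequality on the $g_l$'s. Since $\delta>0$, the sequence $(a_l)$ is geometrically decaying, so $\sum_{l>i}a_l^r\le C(r,\delta)$ for every $r>0$. This geometric decay is what absorbs the extra factors in both cases.

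For (a), I first apply the quasi-triangle inequality \eqref{norm ie} with $\tilde p=\min\{1,p\}$ to obtain
\[
\Big\|\sum_{l>i}a_l G_l\Big\|_{L^p(X)}^{\tilde p}\le \sum_{l>i}a_l^{\tilde p}\|G_l\|_{L^p(X)}^{\tilde p}.
\]
Since $t<p$, the Hardy--Littlewood maximal theorem at exponent $p/t>1$ yields
$\|G_l\|_{L^p(X)}=\|\M g_l^t\|_{L^{p/t}(X)}^{1/t}\le C\|g_l\|_{L^p(X)}$. Writing $c_l=\|g_l\|_{L^p(X)}$, it then remains to show $\sum_{l>i}a_l^{\tilde p}c_l^{\tilde p}\le C(\sum_{l>i}c_l^q)^{\tilde p/q}$. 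When $q\ge\tilde p$, this follows from H\"older's inequality with exponents $q/\tilde p$ and its dual, exploiting geometric summability of $a_l^{\tilde p q/(q-\tilde p)}$; when $q<\tilde p$, the embedding $\ell^q\hookrightarrow\ell^{\tilde p}$ gives $\sum c_l^{\tilde p}\le(\sum c_l^q)^{\tilde p/q}$, and I simply bound $a_l^{\tilde p}$ by its (finite) supremum. Raising to the power $1/\tilde p$ finishes (a).

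For (b), the key observation is the pointwise inequality
\[
a_l G_l(x)\le a_l\Big(\sum_{l'>i}G_{l'}(x)^q\Big)^{1/q},
\]
valid for every $q>0$ since $G_l(x)^q\le\sum_{l'}G_{l'}(x)^q$. Summing in $l$ and using $\sum_{l>i}a_l\le C$,
\[
\sum_{l>i}a_l G_l(x)\le C\Big(\sum_{l>i}G_l(x)^q\Big)^{1/q}.
\]
Taking $L^p$ norms and writing $G_l^q=(\M g_l^t)^{q/t}$, the right-hand side equals $C\bigl\|(\M g_l^t)_{l>i}\bigr\|_{L^{p/t}(X;\,l^{q/t})}^{1/t}$. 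Because $t<\min\{p,q\}$, both $p/t>1$ and $q/t>1$, so the Fefferman--Stein vector-valued maximal theorem \eqref{Fefferman-Stein} applies to the sequence $(g_l^t)_{l>i}$ and gives
\[
\bigl\|(\M g_l^t)_{l>i}\bigr\|_{L^{p/t}(X;\,l^{q/t})}\le C\bigl\|(g_l^t)_{l>i}\bigr\|_{L^{p/t}(X;\,l^{q/t})}=C\Big\|\Big(\sum_{l>i}g_l^q\Big)^{1/q}\Big\|_{L^p(X)}^{t}.
\]
Taking the $1/t$-th power closes the argument.

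The only subtlety I anticipate is the case split in (a) between $q\ge\tilde p$ and $q<\tilde p$; in (b) everything is driven by the single pointwise $\ell^q$-domination trick, after which Fefferman--Stein does all the real work. The hypothesis $t<p$ (resp.\ $t<\min\{p,q\}$) is used precisely so that the relevant maximal inequality is available at the shifted exponent.
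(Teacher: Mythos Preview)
Your proof is correct and follows essentially the same strategy as the paper: in (a) you pass to the norm via the $\tilde p$-quasi-triangle inequality, apply the Hardy--Littlewood maximal theorem at exponent $p/t>1$, and then use H\"older or the $\ell^q\hookrightarrow\ell^{\tilde p}$ embedding on the resulting scalar sum; in (b) you first obtain the pointwise bound $\sum_{l>i}a_lG_l\le C(\sum_{l>i}G_l^q)^{1/q}$ and then invoke the Fefferman--Stein inequality at exponents $p/t,q/t>1$, exactly as the paper does. The only cosmetic differences are that the paper carries out the first step of (a) pointwise before integrating (splitting at $q$ versus $p$ rather than $q$ versus $\tilde p$) and derives the pointwise bound in (b) via H\"older/\eqref{elem ie} instead of your direct $\ell^q$-domination trick.
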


\begin{proof} 
a) Let $r=\min\{1,p\}$. By the H\"older inequality if $p>1$ and by the elementary inequality \eqref{elem ie} if $p\le 1$, we have
\[
\bigg(\sum_{l>i}2^{-(l-i)\delta}(\M g_l^t)^{1/t}\bigg)^p
\le C\sum_{l>i}2^{-(l-i)\delta r}(\M g_l^t)^{p/t}.
\]
Hence
\[
\begin{split}
\Big\|\sum_{l>i}2^{-(l-i)\delta}(\M g_l^t)^{1/t}\Big\|_{L^p(X)}^p
&\le \sum_{l>i}2^{-(l-i)\delta r} \big\|(\M g_l^t)\big\|_{L^{p/t}(X)}^{p/t}\\
&\le C\sum_{l>i}2^{-(l-i)\delta r} \|g_l\|_{L^{p}(X)}^{p}\\
&\le C\Big(\sum_{l>i}\|g_l\|_{L^p(X)}^q\Big)^{p/q},
\end{split}
\]
where the second estimate follows from the Hardy--Littlewood maximal theorem and the last one from the H\"older inequality if $q>p$ and from \eqref{elem ie} if $q\le p$.
\medskip

b) By the H\"older inequality if $q>1$ and by \eqref{elem ie} if $q\le 1$,
\[
\sum_{l>i}2^{-(l-i)\delta}(\M g_l^t)^{1/t}
\le C\Big(\sum_{l>i}(\M g_l^t)^{q/t}\Big)^{1/q}.
\]
Hence, by the Fefferman--Stein maximal theorem,
\[
\begin{split}
\Big\|\sum_{l>i}2^{-(l-i)\delta}\big(\M g_l^t\big)^{1/t}\Big\|_{L^p(X)}
&\le C\Big\|\Big(\sum_{l>i}\big(\M g_l^t\big)^{q/t}\Big)^{t/q}\Big\|_{L^{p/t}(X)}^{1/t}\\
&\le C\Big\|\Big(\sum_{l>i} g_l^q\Big)^{t/q}\Big\|_{L^{p/t}(X)}^{1/t}\\
&= C\Big\|\Big(\sum_{l>i} g_l^q\Big)^{1/q}\Big\|_{L^{p}(X)}.\qedhere
\end{split}
\]
\end{proof}

\begin{proof}[Proof of Theorem \ref{density of loc lip}]
Let $u\in \dot N^s_{p,q}(X)$ and let $u_i=u_{2^{-i}}^\gamma$, $i\in\n$. 
By the definition of the discrete $\gamma$-median convolution and the properties of the functions $\ph_j$, 
\[
u-u_i=\sum_{j\in J} \big(u-m_u^\gamma(B_j)\big)\ph_j.
\]
Since each $\ph_j$ is $C2^i$-Lipschitz and supported in $2B_j$, Lemma \ref{lemma 1} implies
that the sequence $(Ch^j_k)_{k\in\z}$, where
\[
h^j_{k}=
\begin{cases}
(g_k+2^{i+k(s-1)}|u-m_u^\gamma(B_j)|)\ch{2 B_j}, &\text{ when } k>i\\
2^{ks}|u-m_u^\gamma(B_j)|\ch{2 B_j}, &\text{ when } k\le i
\end{cases},
\]
is a fractional $s$-gradient of the function $(u-m^\gamma_u(B_j))\varphi_j$. 

Let $(g_k)_{k\in\z}\in \D^s(u)$ and $0<t<\min\{p,q\}$.
Then, by Lemma \ref{lemma 2} and by
the bounded overlap of the balls $2B_j$, the sequence $(Ch_k)_{k\in\z}$, where
\[
h_{k}=
\begin{cases}
g_k+2^{(k-i)(s-1)}\sum_{l>i-5}2^{(i-l)s'}(\M g_l^t)^{1/t}, &\text{ when } k>i\\
2^{(k-i)s}\sum_{l>i-5}2^{(i-l)s'}(\M g_l^t)^{1/t}, &\text{ when } k\le i
\end{cases},
\]
is a fractional $s$-gradient of $u-u_i$.  
 
By Lemma \ref{lemma 3}, we have 
\[
\Big(\sum_{k\in\z}\|h_k\|_{L^p(X)}^q\Big)^{1/q}
\le C\Big(\sum_{l>i-5}\|g_l\|_{L^p(X)}^q\Big)^{1/q}
\]
and by Lemmas \ref{med pointwise} and \ref{lemma 3},
\[
\|u-u_{i}\|_{L^p(X)}
\le C2^{-is}\Big(\sum_{l>i-5}\|g_l\|_{L^p(X)}^q\Big)^{1/q}.
\]
Thus, if $u\in \dot N^s_{p,q}(X)$, then $\| u-u_{i}\|_{N^s_{p,q}(X)}\to 0$ as $i\to\infty$. 
\medskip

Similarly, by Lemma \ref{lemma 3}, 
\[
\Big\|\Big(\sum_{k\in\z}h_k^q\Big)^{1/q}\Big\|_{L^p(X)}
\le C\Big\|\Big(\sum_{l>i-5}g_l^q\Big)^{1/q}\Big\|_{L^p(X)}
\]
and by Lemmas \ref{med pointwise} and \ref{lemma 3},
\[
\|u-u_{i}\|_{L^p(X)}\le C2^{-is}
\Big\|\Big(\sum_{l>i-5}g_l^q\Big)^{1/q}\Big\|_{L^p(X)}.
\]
So, if $u\in \dot M^{s}_{p,q}(X)$, then $\| u-u_{i}\|_{M^s_{p,q}(X)}\to 0$ as $i\to\infty$. 
\end{proof}

The following example shows that discrete (median) convolution approximations of a Haj\l asz function $u\in M^{1,p}(X)$ do not necessarily converge to $u$ in $M^{1,p}(X)$.
\begin{example}
Let $X=[0,1]$ and let $d$ and $\mu$ be the restrictions of the Euclidean distance and the Lebesgue measure to $[0,1]$. 
Let $i\in\n$. For $j=0,1,\dots,2^{i}$, let $B_j=B(j2^{-i},2^{-i})$ and 
\[
\varphi_j(x)=\max\{0,1-2^{-i-1}d(x,B(j2^{-i},2^{-i-2}))\}.
\] 
Then $\{\ph_j\}_j$ is a partition of unity subordinate to the covering $\{B_j\}_j$ of $X$. 
If $x\in B(j2^{-i},2^{-i-2})$, then $\varphi_j(x)=1$ and $\varphi_k(x)=0$ for $k\neq j$.

Let $u\colon X\to\re$, $u(x)=x$. 
Then, for each $x\in B(j2^{-i},2^{-i-2})$, 
\[
u^\gamma_{2^{-i}}(x)=\sum_k m^\gamma_u(B_k)\varphi_k(x)=m^\gamma_u(B_j).
\] 
Hence, if $x,y\in B(j2^{-i},2^{-i-2})$, then
\[
(u-u^\gamma_{2^{-i}})(x)-(u-u^\gamma_{2^{-i}})(y)=u(x)-u(y)=x-y.
\]
It follows that any $g\in \cD^1(u-u^\gamma_{2^{-i}})$ must satisfy $g(x)\ge1/2$ for almost every $x\in \cup_j B(j2^{-i},2^{-i-2})$. 
Thus $\|u-u^\gamma_{2^{-i}}\|_{M^{1,p}(X)}\not\to 0$ as $i\to\infty$.

The same argument shows that the usual discrete convolutions $u_{2^{-i}}$ do not converge to $u$ in $M^{1,p}(X)$ either. 
\end{example}

\section{Another proof of Theorem \ref{density of loc lip}}\label{sec: easy proof}
In this section, we give a simpler proof for Theorem \ref{density of loc lip} under the assumption that
the underlying space $X$ has the nonempty spheres property. 
This proof is completely elementary and avoids the use of a chaining argument, a Sobolev--Poincar\'e inequality and the Fefferman--Stein maximal theorem.

\begin{definition}\label{nonempty spheres}
A metric space $X$ has the \emph{nonempty spheres property}, if there exists $R>0$ such that, for every $x\in X$ and every $0<r<R$, the set $\{y\in X: d(x,y)=r\}$ is nonempty. 
\end{definition}
Note that the nonempty spheres property implies that annuli have positive measure: Let $x\in X$, $0<r<R$, $0<\eps<r$ and let 
$A=B(x,r)\setminus B(x,r-\eps)$. By the assumption, there is $y$ such that $d(x,y)=r-\eps/2$. 
Now $B_y=B(y,\eps/2)\subset A$ which shows that $\mu(A)\ge\mu(B_y)>0$.
 
The nonempty spheres property allows us to prove the following estimate, which is an improved version
of Lemma \ref{med pointwise}.
Notice that, in contrast to the proof of Lemma \ref{med pointwise}, neither a chaining argument nor a Sobolev--Poincar\'e
inequality is needed.

\begin{lemma}\label{lemma 2 med 2} 
Assume that $X$ has the nonempty spheres property.
Let $0<\gamma\le 1/2$ and $0<s<\infty$. 
Let $u\in L^0(X)$ and let $(g_k)_{k\in\z}\in \D^s(u)$ be such that $g_k\in L^0(X)$ for every $k\in\z$.
Then there exists a constant $C=C(c_d,s)$ such that inequality
\[
|u(x)-m_u^\gamma(B(y,2^{-i}))|\le C2^{-is}\M^{\gamma/C}\tilde g_i(x),
\]
where $\tilde g_i=\max_{i-4\le k\le i} g_k$, holds for every $y\in X$, $i\in\z$ with $2^{-i+3}<R$ and almost every $x\in B(y,2^{-i+1})$.
\end{lemma}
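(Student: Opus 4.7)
My plan is to reduce the estimate to bounding $m_{|u-u(x)|}^\gamma(B(y,2^{-i}))$, and then control this median by a direct pointwise application of the fractional $s$-gradient inequality at a single dyadic scale $k\in\{i-4,\dots,i\}$, thereby avoiding any chaining argument. By Lemma~\ref{median lemma}(d) and (f),
\[
|m_u^\gamma(B(y,2^{-i}))-u(x)| = |m_{u-u(x)}^\gamma(B(y,2^{-i}))| \le m_{|u-u(x)|}^\gamma(B(y,2^{-i})),
\]
so it suffices to show that the bad set $E:=\{z\in B(y,2^{-i}):|u(z)-u(x)|>R\}$ has measure less than $\gamma\mu(B(y,2^{-i}))$, where $R:=C\,2^{-is}\M^{\gamma/C}\tilde g_i(x)$ and $C=C(c_d,s)$ is to be fixed.

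For the gradient step, note that $x\in B(y,2^{-i+1})$ forces $d(x,z)<3\cdot 2^{-i}$ for every $z\in B(y,2^{-i})$, so whenever additionally $d(x,z)\ge 2^{-i-1}$, the unique dyadic index $k$ with $d(x,z)\in[2^{-k-1},2^{-k})$ satisfies $k\in\{i-2,i-1,i\}\subset\{i-4,\dots,i\}$, and \eqref{frac grad} yields $|u(z)-u(x)|\le 4^s\,2^{-is}(\tilde g_i(x)+\tilde g_i(z))$. Using $\tilde g_i(x)\le \M^{\gamma/C}\tilde g_i(x)$ a.e.\ (Lemma~\ref{median lemma}(j)), this gives
\[
E\subset\bigl(B(y,2^{-i})\cap B(x,2^{-i-1})\bigr)\cup\bigl\{z\in B(y,2^{-i}):\tilde g_i(z)>\M^{\gamma/C}\tilde g_i(x)\bigr\}
\]
provided $C\ge 2\cdot 4^s$. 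Since $B(y,2^{-i})\subset B(x,2^{-i+2})$ and $\mu(B(x,2^{-i+2}))\le c_d^3\mu(B(y,2^{-i}))$ by doubling, the defining property of $\M^{\gamma/C}\tilde g_i(x)$ applied on $B(x,2^{-i+2})$ bounds the second piece by $(\gamma/C)c_d^3\mu(B(y,2^{-i}))$, which is less than $(\gamma/2)\mu(B(y,2^{-i}))$ if we take $C\ge 2c_d^3$.

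The main obstacle, and where the nonempty spheres hypothesis enters decisively, is controlling the first piece $\mu(B(y,2^{-i})\cap B(x,2^{-i-1}))$. Nonempty spheres implies that $X$ is uniformly perfect, and a direct variant of the positive-measure-annulus argument given just before the lemma -- pick $y_0$ with $d(x,y_0)=3r/4$, note that $B(y_0,r/4)\subset B(x,r)\setminus B(x,r/2)$, and use doubling to compare $B(y_0,r/4)$ with $B(y_0,2r)\supset B(x,r)$ -- yields the annular decay bound $\mu(B(x,r/2))\le(1-c_d^{-3})\mu(B(x,r))$ for $r<R$. Iterating this estimate a bounded number of times (depending only on $c_d$) and tracking the propagation of doubling constants through the inclusion $B(x,2^{-i-1})\subset B(x,2^{-i+2})$ supplies the missing measure bound; the careful bookkeeping here is the technical heart of the argument, and must be arranged so that a single $C=C(c_d,s)$ works for all $\gamma\in(0,1/2]$, with the slack absorbed into the parameter $\gamma/C$ of the median maximal function. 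Once this estimate is in place, $\mu(E)<\gamma\mu(B(y,2^{-i}))$, hence $m_{|u-u(x)|}^\gamma(B(y,2^{-i}))\le R$, and the lemma follows.
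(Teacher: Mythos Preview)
Your reduction to bounding $m_{|u-u(x)|}^\gamma(B(y,2^{-i}))$ is fine, and the gradient inequality does control $|u(z)-u(x)|$ by $C2^{-is}(\tilde g_i(x)+\tilde g_i(z))$ when $d(x,z)\ge 2^{-i-1}$. The difficulty is the first piece of your decomposition, and here your argument has a genuine gap. You need
\[
\mu\bigl(B(y,2^{-i})\cap B(x,2^{-i-1})\bigr)<\tfrac{\gamma}{2}\,\mu(B(y,2^{-i}))
\]
for \emph{every} $\gamma\in(0,1/2]$, with constants depending only on $c_d$ and $s$. But this is impossible: when $x=y$, the left side equals $\mu(B(y,2^{-i-1}))\ge c_d^{-1}\mu(B(y,2^{-i}))$ by doubling, so the inequality fails once $\gamma<2c_d^{-1}$. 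Iterating the annular decay estimate $\mu(B(x,r/2))\le(1-c_d^{-3})\mu(B(x,r))$ does shrink the measure, but the number of iterations needed to beat $\gamma/2$ grows like $\log(1/\gamma)$, so it cannot ``depend only on $c_d$''. Worse, each extra iteration pushes the uncontrolled ball to a smaller radius $2^{-i-N}$, and the gradient inequality on the complementary shell would then require scales $k>i$, which are not available in $\tilde g_i=\max_{i-4\le k\le i}g_k$. There is no slack in the $\gamma/C$ parameter of $\M^{\gamma/C}$ that can rescue this term, since the first piece does not involve $\tilde g_i$ at all.

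The paper's proof sidesteps the issue entirely: rather than working on $B(y,2^{-i})$ directly, it inserts the median over an annulus $A=B(y,2^{-i+3})\setminus B(y,2^{-i+2})$ as an intermediate quantity. Every $z\in A$ satisfies $2^{-i}\le d(x,z)<2^{-i+4}$, so the gradient inequality applies at a scale in $\{i-4,\dots,i\}$ to \emph{every} point of $A$, with no leftover ``close'' set to discard. The nonempty spheres property is then used only once, to guarantee $\mu(A)>0$ and compare it to $\mu(B(x,2^{-i+4}))$ via doubling; no iterated decay is needed, and no $\gamma$-dependence enters the constants.
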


\begin{proof} 
Denote by $E$ the set outside of which \eqref{frac grad} holds.
Let $x\in B(y,2^{-i+1})\setminus E$ be a generalized Lebesgue point of $g_k$ for every $k$. 
By Lemma \ref{median lemma} (j), almost every point is such a point.
Let 
\[
A=B(y,2^{-i+3})\setminus B(y,2^{-i+2}).
\] 
Then $A\subset B(x,2^{-i+4})$ and the nonempty spheres property and the
doubling condition imply that $\mu(B(x,2^{-i+4}))\le C\mu(A)$.
By the triangle inequality, we have
\begin{equation}\label{eq}
|u(x)-m_u^\gamma(B(y,2^{-i}))|\le |u(x)-m_u^\gamma(A)|+|m_u^\gamma(A)-m_u^\gamma(B(y,2^{-i}))|.
\end{equation}
We begin by estimating the first term on the right-hand side of \eqref{eq}. 
If $z\in A\setminus E$, then $2^{-i}\le d(x,z)<2^{-i+4}$, and hence 
\[
|u(x)-u(z)|\le 2^{(-i+4)s}(\tilde g_i(x)+\tilde g_i(z)).
\] 
Hence, using Lemma \ref{median lemma} (b)-(f), we have
\[
\begin{split}
|u(x)-m_u^\gamma(A)|&=|m_{u-u(x)}^\gamma(A)|
\le m_{|u-u(x)|}^\gamma(A)\\
&\le C2^{-is}\big(m_{\tilde g_i}^\gamma(A)+\tilde g_i(x)\big)\\
&\le C2^{-is}\big(m_{\tilde g_i}^{\gamma/C}(B(x,2^{-i+4}))+\tilde g_i(x)\big)\\
&\le C2^{-is}\M^{\gamma/C}\tilde g_i(x).
\end{split}
\]

Next, we estimate the second term on the right-hand side of \eqref{eq}. By the same argument as above, 
\[
\begin{split}
|u(z)-m_u^\gamma(A)| \le C2^{-is}\big(m_{\tilde g_i}^\gamma(A)+\tilde g_i(z)\big)
\end{split}
\]
for every $z\in B(y,2^{-i})\setminus E$. Hence we obtain
\[
\begin{split}
|m_u^\gamma(A)-m_u^\gamma(B(y,2^{-i}))| &\le m_{|u-m_u^\gamma(A)|}^\gamma(B(y,2^{-i}))\\ 
&\le C2^{-is}\big(m_{\tilde g_i}^{\gamma}(A)+m_{\tilde g_i}^\gamma(B(y,2^{-i}))\big)\\
&\le C2^{-is}m_{\tilde g_i}^{\gamma/C}(B(x,2^{-i+4}))\\
&\le C2^{-is} \M^{\gamma/C}\tilde g_i (x),
\end{split}
\]
and the claim follows.
\end{proof}

\begin{proof}[Proof of Theorem \ref{density of loc lip} (when $X$ satisfies the nonempty spheres property)]

Let $i\in\z$ be such that $2^{-i+3}<R$. Let $u\in L^0(X)$ and $(g_k)_{k\in\z}\in \D^s(u)$ with $g_k\in L^0(X)$ for every $k\in\z$.
It follows from Lemmas \ref{lemma 1} and \ref{lemma 2 med 2} and from the bounded overlap of the balls $2B_j$ that the sequence 
$(Ch_k)_{k\in\z}$, where
\[
h_{k}=
\begin{cases}
g_k+2^{(k-i)(s-1)}\M^{\gamma/C} \tilde g_i, &\text{ when } k>i\\
2^{(k-i)s}\M^{\gamma/C}\tilde g_i, &\text{ when } k\le i
\end{cases},
\]
and $\tilde g_i=\max_{i-4\le k\le i} g_k$, is a fractional $s$-gradient of $u-u_i$.  
By the boundedness of $\M^{\gamma/C}$ in 
$L^p$, \eqref{median maximal operator bounded in Lp}, we have
\[
\begin{split}
\|u-u_i\|_{\dot N^s_{p,q}(X)}&\le C
\Big(\sum_{k\in\z}\|h_k\|_{L^p(X)}^q\Big)^{1/q}\\
&\le C\Big(\sum_{k>i}\|g_k\|_{L^p(X)}^q\Big)^{1/q}+C\|\M^{\gamma/C}\tilde g_i\|_{L^p(X)}\\
&\le C\Big(\sum_{k>i-5}\|g_k\|_{L^p(X)}^q\Big)^{1/q}
\end{split}
\]
and
\[
\begin{split}
\|u-u_i\|_{\dot M^s_{p,q}(X)}
&\le C\Big\|\Big(\sum_{k\in\z}h_k^q\Big)^{1/q}\Big\|_{L^p(X)}\\
&\le C\Big\|\Big(\sum_{k>i}g_k^q\Big)^{1/q}\Big\|_{L^p(X)}+C\big\|\M^{\gamma/C}\tilde g_i\big\|_{L^p(X)}\\
&\le C\Big\|\Big(\sum_{k>i-5}g_k^q\Big)^{1/q}\Big\|_{L^p(X)}.
\end{split}
\]
Moreover, by Lemma \ref{lemma 2 med 2} and \eqref{median maximal operator bounded in Lp}, 
\[
\|u-u_i\|_{L^p(X)}\le C2^{-is}\|\M^{\gamma/C}\tilde g_i\|_{L^p(X)}\le C2^{-is}\|\tilde g_i\|_{L^p(X)}.
\]
The claim follows from these estimates.
\end{proof}

\section{Approximation by discrete convolutions}\label{sec: discrete convolutions}
The main result of this section is Theorem \ref{density of loc lip 2}, which is a counterpart of Theorem \ref{density of loc lip} for usual discrete convolutions. It shows that discrete convolutions converge to the locally integrable function in Haj\l asz--Besov and Haj\l asz--Triebel--Lizorkin norm if $Q/(Q+s)<p<\infty$ in the Besov case and $Q/(Q+s)<q<\infty$ in the Triebel--Lizorkin case. 
Recall from \eqref{discrete convolution} that the discrete convolution of a locally integrable function $u$ at the scale $r$  is 
$u_r=\sum_{i\in I}u_{B_i}\ph_i$.

\begin{theorem}\label{density of loc lip 2} 
Let $0<s<1$ and $Q/(Q+s)<p<\infty$.

\begin{itemize}
\item[a)] If $0<q<\infty$, then 
\[
\lim_{i\to\infty}\|u_{2^{-i}}-u\|_{N^s_{p,q}(X)}=0
\] 
for every $u\in \dot N^s_{p,q}(X)$.
\item[b)] If $Q/(Q+s)<q<\infty$, then 
\[
\lim_{i\to\infty}\|u_{2^{-i}}-u\|_{M^s_{p,q}(X)}=0
\] 
for every $u\in \dot M^s_{p,q}(X)$.
\end{itemize}
\end{theorem}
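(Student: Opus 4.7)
The plan is to mirror the proof of Theorem~\ref{density of loc lip}, with integral averages $u_{B_j}$ in place of the medians $m_u^\gamma(B_j)$. Starting from
\[
u-u_{2^{-i}} = \sum_{j\in I}(u-u_{B_j})\varphi_j,
\]
I would apply Lemma~\ref{lemma 1} to each product $(u-u_{B_j})\varphi_j$ and use the bounded overlap of the balls $2B_j$ to obtain, up to a multiplicative constant, a fractional $s$-gradient of $u-u_{2^{-i}}$ of the form
\[
h_k = \begin{cases} g_k + 2^{(k-i)(s-1)}\sum_{l>i-5}2^{(i-l)s'}(\M g_l^t)^{1/t}, & k>i,\\ 2^{(k-i)s}\sum_{l>i-5}2^{(i-l)s'}(\M g_l^t)^{1/t}, & k\le i, \end{cases}
\]
provided the pointwise estimate described below is available.

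The main new ingredient is the analogue of Lemma~\ref{med pointwise} for integral averages: for a suitable $t$ there exist $0<s'<s$ and $C>0$ with
\[
\bigl|u(x)-u_{B(y,2^{-i})}\bigr| \le C\,2^{-is}\sum_{l>i-5}2^{(i-l)s'}\bigl(\M g_l^t(x)\bigr)^{1/t}
\]
for a.e.\ $x\in B(y,2^{-i+1})$. I would establish it by the same chaining argument used for Lemma~\ref{med pointwise}: telescope $u_{B(x,2^{-k-1})}-u_{B(x,2^{-k})}$ over dyadic balls centered at a Lebesgue point $x$, bound each difference by $\vint{B(x,2^{-k-1})}|u-u_{B(x,2^{-k})}|\,d\mu$, and apply the Sobolev--Poincar\'e inequality from \cite[Lemma~2.1]{GKZ}; a similar doubling comparison handles the jump from $u_{B(y,2^{-i})}$ to $u_{B(x,2^{-i+2})}$. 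Unlike in the median proof, here the left-hand side of Sobolev--Poincar\'e must be an $L^1$-type mean, which forces $t>Q/(Q+s)$. The hypothesis $p>Q/(Q+s)$ (and, in part (b), $q>Q/(Q+s)$) is precisely what permits the choice $Q/(Q+s)<t<p$ (respectively $t<\min\{p,q\}$); the same hypothesis delivers $u\in\lloc{X}$ via Sobolev embedding, so the integral averages $u_{B_j}$ make sense.

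With the pointwise estimate in hand, the norm estimates repeat the final computation of Theorem~\ref{density of loc lip}: Lemma~\ref{lemma 3}(a) converts the maximal-function sum in $h_k$ into $\bigl(\sum_{l>i-5}\|g_l\|_{L^p(X)}^q\bigr)^{1/q}$ in the Besov case, Lemma~\ref{lemma 3}(b) yields $\bigl\|\bigl(\sum_{l>i-5}g_l^q\bigr)^{1/q}\bigr\|_{L^p(X)}$ in the Triebel--Lizorkin case, and the matching $L^p$-bound for $u-u_{2^{-i}}$ follows from integrating the pointwise estimate through Lemma~\ref{lemma 3} one more time. All these quantities tend to zero as $i\to\infty$, proving the theorem. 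The main obstacle, and the structural reason why $p,q>Q/(Q+s)$ cannot be dropped here (whereas for the median version it was not needed), is precisely the lower bound $t>Q/(Q+s)$ imposed by the Sobolev--Poincar\'e step: for smaller $p$ (or $q$) no admissible $t$ exists and the pointwise estimate collapses.
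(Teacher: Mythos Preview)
Your proposal is correct and follows essentially the same route as the paper: the paper records the integral-average pointwise estimate you describe as its own Lemma~\ref{lemma 2} (derived from the Sobolev--Poincar\'e inequality Lemma~\ref{poincare} with $t>Q/(Q+s)$), and then the proof of Theorem~\ref{density of loc lip 2} proceeds exactly as you outline, with the same fractional $s$-gradient $(h_k)$ and the same applications of Lemma~\ref{lemma 3}. Your identification of why the constraint $p,q>Q/(Q+s)$ enters (namely, to allow an admissible $t$ in the Sobolev--Poincar\'e step and to ensure local integrability) is also on the mark.
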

The proof is almost the same as the proof of Theorem \ref{density of loc lip} given in Section \ref{sec: appr by med convolutions}. Instead of Lemma \ref{med poincare} we use Lemma \ref{poincare} which follows from \cite[Lemma 2.1]{GKZ}.

\begin{lemma}\label{poincare}
Let $0<s<\infty$ and $t>Q/(Q+s)$. Then there exist constants $0<s'<s$ and $C>0$ such that
\begin{equation}\label{Poincare for averages}
\begin{split}
\vint{B(x,2^{-k})}|u-u_{B(x,2^{-k})}|\,d\mu
\le C2^{-ks}\sum_{l\ge k-2}2^{(k-l)s'}
\Big(\,\vint{B(x,2^{-k+1})}g_l^{t}\,d\mu\Big)^{1/t}
\end{split}
\end{equation}
for all locally integrable functions $u$, all $(g_k)_{k\in\z}\in\D^s(u)$, $x\in X$ and $k\in\z$. 
In particular, if $p>Q/(Q+s)$, then \eqref{Poincare for averages} holds for every $u\in N^s_{p,q}(X)\cup M^s_{p,q}(X)$.
\end{lemma}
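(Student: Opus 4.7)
The plan is to derive the lemma directly from the Sobolev--Poincaré estimate of \cite[Lemma 2.1]{GKZ} (which is, in fact, exactly the input used to prove the earlier median version, Lemma \ref{med poincare}). That cited lemma provides, for each $t > Q/(Q+s)$, constants $0 < s' < s$ and $C > 0$ such that
\[
\inf_{c\in\re}\,\vint{B(x,2^{-k})}|u-c|\,d\mu
\;\le\; C\,2^{-ks}\sum_{l\ge k-2} 2^{(k-l)s'}\Big(\vint{B(x,2^{-k+1})} g_l^t\,d\mu\Big)^{1/t}
\]
for all locally integrable $u$ with $(g_k)\in\D^s(u)$. So the first step is just to recall this estimate with the infimum on the left.

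The second step is to pass from the infimum to $u_{B(x,2^{-k})}$ on the left by a one-line quasi-minimality: for any $c\in\re$ and any ball $B$,
\[
\vint{B}|u-u_B|\,d\mu \;\le\; \vint{B}|u-c|\,d\mu + |u_B-c|
\;\le\; 2\,\vint{B}|u-c|\,d\mu,
\]
since $|u_B - c| = |\vint_B (u-c)\,d\mu| \le \vint_B |u-c|\,d\mu$. Taking the infimum over $c$ gives the main inequality, up to a factor of $2$ absorbed into $C$.

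The third step deals with the ``in particular'' clause: one must check that any $u\in N^s_{p,q}(X)\cup M^s_{p,q}(X)$ with $p > Q/(Q+s)$ is in fact locally integrable, since the estimate was stated for $u\in\lloc{X}$. When $p\ge 1$ this is immediate from $u\in L^p(X)$. When $Q/(Q+s)<p<1$, the condition $p>Q/(Q+s)$ is exactly the threshold at which the Sobolev exponent $p^* = Qp/(Q-sp)$ exceeds $1$ (in the case $sp<Q$; otherwise one gets a higher integrability directly), so the Sobolev embedding available for Haj\l asz--Besov and Haj\l asz--Triebel--Lizorkin spaces places $u$ into $L^{p^*}_{\mathrm{loc}}(X)$ with $p^*>1$, yielding local integrability.

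I do not expect a serious obstacle: the heavy lifting is already done in \cite[Lemma 2.1]{GKZ}, and the two remaining steps (quasi-minimality of the mean and a local-integrability check via Sobolev embedding when $p<1$) are standard. The only point demanding a little care is the $p<1$ case of the second claim, where one must verify that the embedding constant and the threshold $p>Q/(Q+s)$ match up correctly; this is the reason the same threshold $Q/(Q+s)$ appears on both the parameter $t$ and the parameter $p$ in the statement.
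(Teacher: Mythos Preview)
Your proposal is correct and follows essentially the same route as the paper, which simply records that the lemma ``follows from \cite[Lemma~2.1]{GKZ}'' without further detail. You have filled in the natural steps: the quasi-minimality of the integral average to pass from $\inf_c$ to $u_B$, and the local integrability of $u\in N^s_{p,q}(X)\cup M^s_{p,q}(X)$ when $p>Q/(Q+s)$ via the Sobolev embedding, which is exactly why the threshold $Q/(Q+s)$ appears both for $t$ and for $p$.
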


A standard chaining argument and \eqref{Poincare for averages} imply the following pointwise estimate.

\begin{lemma}\label{lemma 2} 
Let $0<s<\infty$ and $Q/(Q+s)<t<\infty$. Then there exist constants $C>0$ and $0<s'<s$ such that
\[
|u(x)-u_{B(y,2^{-i})}|\le C2^{-is}\sum_{l>i-5}2^{(i-l)s'}\big(\M g_l^t(x)\big)^{1/t}
\]
for all locally integrable functions $u$, all $(g_k)_{k\in\z}\in \D^s(u)$,
$y\in X$, $i\in\z$ and almost every $x\in B(y,2^{-i+1})$.
\end{lemma}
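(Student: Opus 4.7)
The plan is to mirror the chaining argument used in the proof of Lemma \ref{med pointwise}, with the integral-average Poincar\'e inequality \eqref{Poincare for averages} replacing its median analogue, and the Lebesgue differentiation theorem replacing Lemma \ref{median lemma}(j). Since $u$ is locally integrable and $t > Q/(Q+s)$, Lemma \ref{poincare} applies, and almost every $x \in X$ is a Lebesgue point of $u$.

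First I would fix a Lebesgue point $x \in B(y, 2^{-i+1})$ and write the telescoping identity
\[
u(x) - u_{B(x, 2^{-i+2})} = \sum_{k > i-3} \bigl( u_{B(x, 2^{-k-1})} - u_{B(x, 2^{-k})} \bigr),
\]
which converges by Lebesgue differentiation. For each consecutive term, since $B(x,2^{-k-1}) \subset B(x,2^{-k})$ with $\mu(B(x,2^{-k})) \le C \mu(B(x,2^{-k-1}))$ by doubling, I would estimate
\[
|u_{B(x, 2^{-k-1})} - u_{B(x, 2^{-k})}| \le C \vint{B(x, 2^{-k})} |u - u_{B(x, 2^{-k})}| \, d\mu,
\]
and then apply \eqref{Poincare for averages} to bound this by $C 2^{-ks} \sum_{l \ge k-2} 2^{(k-l)s'} (\M g_l^t(x))^{1/t}$.

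Next, I would exchange the order of summation over $k$ and $l$: for each fixed $l > i-5$, the range of $k$ becomes $i-3 < k \le l+2$, and because $s' < s$ the finite geometric sum $\sum_{i-3 < k \le l+2} 2^{k(s'-s)}$ is comparable to its leading term $2^{i(s'-s)}$. This yields
\[
|u(x) - u_{B(x, 2^{-i+2})}| \le C 2^{-is} \sum_{l > i-5} 2^{(i-l)s'} (\M g_l^t(x))^{1/t}.
\]

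Finally, since $B(y, 2^{-i}) \subset B(x, 2^{-i+2})$ with comparable measures, I would bound
\[
|u_{B(x, 2^{-i+2})} - u_{B(y, 2^{-i})}| \le C \vint{B(x, 2^{-i+2})} |u - u_{B(x, 2^{-i+2})}| \, d\mu
\]
by a single application of Lemma \ref{poincare}, which produces a term of the same form already absorbed in the previous estimate. Combining the two bounds via the triangle inequality gives the claim. The proof is essentially bookkeeping, so no step is seriously difficult; the only point requiring care is verifying that the double sum telescopes correctly to restore the $2^{-is}$ prefactor and the shifted index range $l > i-5$, which is where the condition $s' < s$ from Lemma \ref{poincare} is used decisively.
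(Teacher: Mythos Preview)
Your proposal is correct and follows exactly the approach the paper indicates: the paper simply says that Lemma~\ref{lemma 2} follows from ``a standard chaining argument and \eqref{Poincare for averages}'', and the detailed computation you have written out is precisely the integral-average version of the proof of Lemma~\ref{med pointwise}, with the Lebesgue differentiation theorem and Lemma~\ref{poincare} in place of Lemma~\ref{median lemma}(j) and Lemma~\ref{med poincare}. All the bookkeeping (the telescoping identity, the inclusion $B(y,2^{-i})\subset B(x,2^{-i+2})$, the exchange of sums using $s'<s$, and the index range $l>i-5$) matches the paper's argument line by line.
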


\begin{proof}[Proof of Theorem \ref{density of loc lip 2}]
a) Let $u\in\dot N^s_{p,q}(X)$, $(g_k)_{k\in\z}\in\D^s(u)\cap l^q(L^p(X))$ and let $Q/(Q+s)<t<p$.
Since
\[
u-u_{2^{-i}}=\sum_{j\in J}(u-u_{B_j})\varphi_j,
\]
it follows from Lemmas \ref{lemma 1}, \ref{lemma 2} and from the bounded overlap of the balls $2B_j$ that the sequence
$(Ch_k)_{k\in\z}$, where
\[
h_{k}=
\begin{cases}
g_k+2^{(k-i)(s-1)}\sum_{l>i-5}2^{(i-l)s'}(\M g_l^t)^{1/t}, &\text{ when } k>i\\
2^{(k-i)s}\sum_{l>i-5}2^{(i-l)s'}(\M g_l^t)^{1/t}, &\text{ when } k\le i
\end{cases},
\]
is a fractional $s$-gradient of our function $u-u_{2^{-i}}$.  
By Lemma \ref{lemma 3}, we have 
\[
\bigg(\sum_{k\in\z}\|h_k\|_{L^p(X)}^q\bigg)^{1/q}
\le C\bigg(\sum_{l>i-5}\|g_l\|_{L^p(X)}^q\bigg)^{1/q}
\]
and by Lemmas \ref{lemma 2} and \ref{lemma 3},
\[
\|u-u_{2^{-i}}\|_{L^p(X)}
\le C2^{-is}\bigg(\sum_{l>i-5}\|g_l\|_{L^p(X)}^q\bigg)^{1/q}.
\]
Thus, $\| u-u_{2^{-i}}\|_{N^s_{p,q}(X)}\to 0$ as $i\to\infty$. 
\medskip

b)  Let $u\in\dot M^s_{p,q}(X)$, $(g_k)_{k\in\z}\in\D^s(u)\cap L^p(X,l^q)$ and $Q/(Q+s)<t<\min\{p,q\}$.
Then the sequence $(Ch_k)_{k\in\z}$, where $h_k$ is as above, is a fractional $s$-gradient of $u-u_{2^{-i}}$.  
By Lemma \ref{lemma 3}, we have
\[
\Big\|\Big(\sum_{k\in\z}h_k^q\Big)^{1/q}\Big\|_{L^p(X)}
\le C\Big\|\Big(\sum_{l>i-5}g_l^q\Big)^{1/q}\Big\|_{L^p(X)}
\]
and by Lemmas \ref{lemma 2} and \ref{lemma 3},
\[
\|u-u_{2^{-i}}\|_{L^p(X)}\le C2^{-is}
\Big\|\Big(\sum_{l>i-5}g_l^q\Big)^{1/q}\Big\|_{L^p(X)}.
\]
Hence $\| u-u_{2^{-i}}\|_{M^s_{p,q}(X)}\to 0$ as $i\to\infty$. 
\end{proof}

\section{Capacity}\label{sec: capacity}
In this section, we define Haj\l asz--Besov and Haj\l asz--Triebel--Lizorkin capacities and prove some of their basic properties.
In the Euclidean setting, Besov and Triebel--Lizorkin capacities are studied and used for example in \cite{A1}, \cite{A2}, \cite{AH}, \cite{AHS}, \cite{AX}, \cite{D}, \cite{HN}, \cite{Ne90}, \cite{Ne92}, \cite{Ne96}, \cite{Sto} and in metric spaces in \cite{B}, \cite{Co} (with less than three indices in the space).  

\begin{definition}
Let $0<s<\infty$, $0<p,q\le \infty$ and $\cF\in\{N^s_{p,q}, M^s_{p,q}\}$.
The $\cF$-capacity of a set $E\subset X$ is
\[
C_{\cF}(E)=\inf\Big\{\|u\|_{\cF}^p: u\in\mathcal A_{\cF}(E)\Big\},
\]
where 
\[
\mathcal A_{\cF}(E)=\big\{u\in \cF: u\ge 1 \text{ on a neighbourhood of } E\big\}
\] 
is a set of admissible functions for the capacity.
We say that a property holds \emph{$\cF$-quasieverywhere} if it holds outside a set of $\cF$-capacity zero.
\end{definition}

\begin{remark} 
Lemma \ref{max lemma} easily implies that
\[
C_{\cF}(E)=\inf\Big\{\|u\|_{\cF}^p: u\in\mathcal A_{\cF}'(E)\Big\},
\]
where
$\mathcal A_{\cF}'(E)=\{u\in \cA_\cF(E): 0\le u\le 1\}$. 
\end{remark}

\begin{remark}\label{cap remark 2} 
It is easy to see that the $\cF$-capacity is an outer capacity, which means that
\[
C_{\cF}(E)=\inf\big\{C_{\cF}(U): U\supset E,\ U \text{ is open}\big\}.
\]
\end{remark}

The $\cF$-capacity is not generally subadditive, but for most purposes, it suffices that it
satisfies \eqref{eq: r-subadd} for some $r>0$. Even in the classical case, countable subadditivity for Besov-capacity is known only when $p\le q$, see \cite{A1}.

\begin{lemma}\label{r-subadd} 
Let $0<s<\infty$, $0<p\le \infty$, $0<p\le \infty$  and 
let $\cF\in\{N^s_{p,q}, M^s_{p,q}\}$.
Then there are constants $c\ge 1$ and $0<r\le 1$ such that
\begin{equation}\label{eq: r-subadd}
C_{\cF}\big(\bigcup_{i\in\n}E_i\big)^r\le c\sum_{i\in\n} C_{\cF}(E_i)^r
\end{equation}
for all sets $E_i\subset X$, $i\in\n$.
Actually, \eqref{eq: r-subadd} holds with $r=\min\{1,q/p\}$.
\end{lemma}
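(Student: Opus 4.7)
The plan is to run the classical ``sup of admissible competitors'' construction, with exponents tuned so that the appropriate elementary, Minkowski, or quasi-triangle inequality applies in each regime. If $\sum_i C_\cF(E_i)^r=\infty$ the claim is vacuous, so I assume this sum is finite and fix $\eps>0$. For each $i$ I pick, using the remark after the definition of $C_\cF$, a competitor $u_i\in\cA_\cF'(E_i)$ (so $0\le u_i\le 1$ and $u_i=1$ on an open set $U_i\supset E_i$) with $\|u_i\|_\cF^p\le C_\cF(E_i)+\eps 2^{-i}$, together with a nearly norm-minimizing fractional gradient $(g_{i,k})_{k\in\z}\in\D^s(u_i)$. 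Setting $u=\sup_i u_i$ and $g_k=\sup_i g_{i,k}$, I have $0\le u\le 1$, $u=1$ on $\bigcup_i U_i$, and Lemma \ref{sup lemma} yields $(g_k)\in\D^s(u)$. Once finiteness of $\|u\|_\cF$ is established below, $u$ is then admissible for $\bigcup_i E_i$.

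The engine of the argument is the pointwise inequality $(\sup_i a_i)^\alpha\le\sum_i a_i^\alpha$ valid for nonnegative $a_i$ and any $\alpha>0$. Applied with $\alpha=p$ it gives $u^p\le\sum_i u_i^p$, which after integration and \eqref{elem ie} (with $r\le 1$) produces $\|u\|_{L^p}^{pr}\le\sum_i\|u_i\|_{L^p}^{pr}$. For the seminorm part, in the Besov case applying the inequality with $\alpha=p$ to $g_k$ yields $\|g_k\|_{L^p}^p\le\sum_i\|g_{i,k}\|_{L^p}^p$, and in the Triebel--Lizorkin case with $\alpha=q$ it yields $g_k^q\le\sum_i g_{i,k}^q$. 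Both bounds reduce the task to separating an outer $i$-sum from an inner $k$-sum (resp.\ integral) inside a mixed $L^p$--$\ell^q$ expression.

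That separation is the step where the value $r=\min\{1,q/p\}$ is actually forced. When $q\le p$ (so $r=q/p$), the separation uses \eqref{elem ie} in the Besov case and Minkowski in $L^{p/q}$ (now $p/q\ge 1$) in the Triebel--Lizorkin case. When $q>p$ (so $r=1$), it uses Minkowski in $\ell^{q/p}$ in the Besov case and the $L^{p/q}$ quasi-triangle inequality \eqref{norm ie} in the Triebel--Lizorkin case. A case-by-case check shows that, in all four sub-cases, one lands on $\|u\|_{\dot\cF}^{pr}\le C\sum_i\|u_i\|_{\dot\cF}^{pr}$. Combining with the $L^p$-bound via the quasi-triangle $\|u\|_\cF^p\le C(\|u\|_{L^p}^p+\|u\|_{\dot\cF}^p)$ and the elementary $(a+b)^r\le a^r+b^r$ for $r\le 1$, one concludes
\[
C_\cF\bigl(\bigcup_i E_i\bigr)^r\le\|u\|_\cF^{pr}\le C\sum_i\|u_i\|_\cF^{pr}\le C\sum_i\bigl(C_\cF(E_i)+\eps 2^{-i}\bigr)^r,
\]
and letting $\eps\to 0$ closes the argument.

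I expect the main obstacle to be pure bookkeeping of exponents across the four sub-cases, together with verifying that the cutoff $r=\min\{1,q/p\}$ is exactly where Minkowski in the non-trivial $\ell^{q/p}$ or $L^{p/q}$ fails and must be replaced by the quasi-triangle inequality. Nothing deep happens analytically; one simply needs to keep straight which inequality lives where.
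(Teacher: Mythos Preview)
Your proposal is correct and follows essentially the same route as the paper: take $u=\sup_i u_i$ and $g_k=\sup_i g_{i,k}$, invoke Lemma \ref{sup lemma}, feed the pointwise bound $(\sup_i a_i)^\alpha\le\sum_i a_i^\alpha$ into the mixed norm, and separate the $i$-sum via Minkowski or the quasi-triangle inequality in $\ell^{q/p}$ (Besov) or $L^{p/q}$ (Triebel--Lizorkin), which is precisely what forces $r=\min\{1,q/p\}$. The paper writes the separation step uniformly rather than splitting into your four sub-cases, and chooses the competitors so that $(\|u_i\|_{L^p}+\|(g_{i,k})\|)^{pr}<C_\cF(E_i)^r+2^{-i}\eps$ rather than your $\|u_i\|_\cF^p\le C_\cF(E_i)+\eps 2^{-i}$, but these are cosmetic differences.
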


\begin{proof} 
Let $r=\min\{1,q/p\}$ and $E=\cup_{i\in\n} E_i$. Let $\eps>0$. 

We prove the case $\cF=N^s_{p,q}$ first. 
We may assume that $ \sum_{i\in\n} C_{N^s_{p,q}}(E_i)^{r}<\infty$.
There are functions $u_i\in \cA'_{N^s_{p,q}}(E_i)$ with $(g_{i,k})_{k\in\z}\in \D^s(u_i)$ such that 
\[
\big(\|u_i\|_{L^p(X)}+\|(g_{i,k})_k\|_{l^q(L^p(X))}\big)^{pr}< C_{N^s_{p,q}}(E_i)^r+2^{-i}\eps.
\] 
Then $u=\sup_{i\in\n} u_i\in \cA'_{N^s_{p,q}}(E)$ and the sequence 
$(g_k)_{k\in\z}=(\sup_{i\in\n}g_{i,k})_{k\in\z}$ is a fractional $s$-gradient of $u$. 
We have $\|g_k\|_{L^p(X)}^p\le \sum_{i\in\n}\|g_{i,k}\|_{L^p(X)}^p$, for every $k$, and so
\[
\begin{split}
\|(g_k)_k\|_{l^q(L^p(X))}^{pr}&= \Big\|\big(\|g_{k}\|_{L^p(X)}^p\big)_k\Big\|_{l^{q/p}}^{r}
\le \Big\|\sum_{i\in\n}\big(\|g_{i,k}\|_{L^p(X)}^p\big)_k\Big\|_{l^{q/p}}^{r}\\
&\le\sum_{i\in\n} \Big\|\big(\|g_{i,k}\|_{L^p(X)}^p\big)_k\Big\|_{l^{q/p}}^{r}
=\sum_{i\in\n}\|(g_{i,k})_k\|_{l^q(L^p(X))}^{pr}
\end{split}
\]
Here we also used the fact that
\[
\big\|\sum_{k\in\n}(a^k_i)_{i\in\z}\big \|_{l^{q/p}}^r
\le \sum_{k\in\n}\big\|(a^k_i)_{i\in\z}\big\|_{l^{q/p}}^r
\] 
for all $(a^k_i)_{i\in\z}\in l^{q/p}$, $k\in\n$. 
Since $\|u\|_{L^p(X)}^{pr}\le \sum_{i\in\n}\|u_i\|_{L^p(X)}^{pr}$, we have that
\[
\begin{split}
C_{N^s_{p,q}}(E)^{r} 
&\le 2^{pr}\big(\|u\|_{L^p(X)}^{pr}+\|(g_{k})_k\|_{l^q(L^p(X))}^{pr}\big)\\
&\le2^{pr}\sum_{i\in\n}\big(\|u_i\|_{L^p(X)}^{pr}+ \|(g_{i,k})_k\|_{l^q(L^p(X))}^{pr}\big)\\
&\le 2^{pr+1}\sum_{i\in\n}\big(\|u_i\|_{L^p(X)}+ \|(g_{i,k})_k\|_{l^q(L^p(X))}\big)^{pr}\\
&\le 2^{pr+1}\Big(\sum_{i\in\n}C_{N^s_{p,q}}(E_i)^r+\eps\Big).
\end{split}
\]
The claim follows by letting $\eps\to 0$. 
\medskip

Assume then that $ \sum_{i\in\n} C_{M^s_{p,q}}(E_i)^{r}<\infty$.
Let $u_i\in \cA'_{M^s_{p,q}}(E_i)$ and $(g_{i,k})_{k\in\z}\in \D^s(u_i)$ be such that 
\[
\big(\|u_i\|_{L^p(X)}+\|(g_{i,k})_k\|_{L^p(X,l^q)}\big)^{pr}< C_{M^s_{p,q}}(E_i)^r+2^{-i}\eps. 
\]
Then $u=\sup_{i\in\n} u_i$ belongs to $\cA'_{M^s_{p,q}}(E)$ and $(g_k)_{k\in\z}=(\sup_{i\in\n}g_{i,k})_{k\in\z}$ is a fractional $s$-gradient of $u$. 
Since  $g_k^q\le\sum_i g_{i,k}^q$, for every $k$, it follows that
\[
\begin{split}
\|(g_k)_k\|_{L^p(X,l^q)}^{pr}
&= \Big\|\big(\sum_{k\in\z} g_k^q\big)^{1/q}\Big\|_{L^p(X)}^{pr}
\le \Big\|\sum_{k\in\z} \sum_{i\in\n} g_{i,k}^q\Big\|_{L^{p/q}(X)}^{pr/q}\\
&= \Big\|\sum_{i\in\n} \sum_{k\in\z}g_{i,k}^q\Big\|_{L^{p/q}(X)}^{\min\{1,p/q\}}
\le\sum_{i\in\n}  \Big\|\sum_{k\in\z}g_{i,k}^q\Big\|_{L^{p/q}(X)}^{\min\{1,p/q\}}\\
&=\sum_{i\in\n}  \Big\|\big(\sum_{k\in\z}g_{i,k}^q\big)^{1/q}\Big\|_{L^{p}(X)}^{pr}
=\sum_{i\in\n} \|(g_{i,k})_k\|_{L^p(X,l^q)}^{pr}.
\end{split}
\]
The rest of the proof is the same as in the Besov case.
\end{proof}



\section{Quasicontinuity and generalized Lebesgue points}\label{sec: qc}
In this section, we prove the second main theorem of the paper, 
Theorem \ref{main thm}, which shows that quasievery point is a generalized 
Lebesgue points of any given Haj\l asz--Besov or Haj\l asz--Triebel--Lizorkin function and that the limit of medians gives a quasicontinuous representative of the function. 

As usual, we say that a function $u$ is $\cF$-\emph{quasicontinuous}, if for every $\eps>0$, there exists a set $U$ such that $C_{\cF}(U)<\eps$ and the restriction of $u$ to $X\setminus U$ is continuous. By Remark \ref{cap remark 2}, the set $U$ can be chosen to be open. 

The main ingredient of the proof of Theorem \ref{main thm} is a capacitary 
weak-type estimate for the median maximal function proved in Theorem \ref{cap weak type} below. This estimate follows immediately from the fact that the discrete median maximal operator is bounded on $\cF(X)$ (Theorem \ref{median maximal op bounded}). 
We begin with a lemma, which gives a formula for a fractional $s$-gradient of the discrete median maximal function.

\begin{lemma}\label{frac grad for med max}
Let $0<\gamma\le 1/2$, $0<s<1$ and $0<t<\infty$.
Let $u\in L^0(X)$ and $(g_k)_{k\in\z}\in\D^s(u)$. 
Then there exist constants $C>0$ and $0<s'<s$ such that $(C\tilde g_k)_{k\in\z}$, where 
\[
\tilde g_k=\sum_{l\in\z}2^{-|l-k|\delta}(\M g_l^t)^{1/t}
\]
and $\delta=\min\{s',1-s\}$, is a fractional $s$-gradient of the discrete $\gamma$-median convolution approximation 
$u^\gamma_{2^{-i}}$ for every $i\in\z$. 
Consequently, if $\M^{\gamma,*} u\not\equiv\infty$, then $(C\tilde g_k)_{k\in\z}$ is a fractional $s$-gradient of $\M^{\gamma,*} u$.
\end{lemma}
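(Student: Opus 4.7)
The strategy is to extract from the proof of Theorem \ref{density of loc lip} an $i$-dependent fractional $s$-gradient for the approximation $u^\gamma_{2^{-i}}$ and then dominate it, uniformly in $i$, by the $i$-independent sequence $(\tilde g_k)_k$. The uniformity is the whole point: only then can Lemma \ref{sup lemma} promote the bound from each $|u|^\gamma_{2^{-i}}$ to their supremum $\M^{\gamma,*}u$.

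Writing $u^\gamma_{2^{-i}} = u - (u - u^\gamma_{2^{-i}})$, the proof of Theorem \ref{density of loc lip} already produces the fractional $s$-gradient $(C(g_k + h^{(i)}_k))_{k\in\z}$ of $u^\gamma_{2^{-i}}$, where
\[
h^{(i)}_k = \begin{cases} g_k + 2^{(k-i)(s-1)}\sigma_i, & k>i,\\ 2^{(k-i)s}\sigma_i, & k\le i, \end{cases}\qquad \sigma_i := \sum_{l>i-5} 2^{(i-l)s'}(\M g_l^t)^{1/t}.
\]
It therefore remains to check that $g_k + h^{(i)}_k \le C\tilde g_k$ for every $k\in\z$, with $C$ independent of $i$. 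The pointwise inequality $g_k \le (\M g_k^t)^{1/t}$ absorbs the $g_k$-contribution into the $l=k$ summand of $\tilde g_k$. For the remaining part, with $\delta := \min\{s',\,1-s\}$, the task reduces to the two exponent estimates
\[
2^{(k-i)(s-1)+(i-l)s'} \le C\cdot 2^{-|l-k|\delta}\quad (k>i),\qquad 2^{(k-i)s+(i-l)s'} \le C\cdot 2^{-|l-k|\delta}\quad (k\le i),
\]
required for every $l>i-5$ uniformly in $i,k,l$, after which summing in $l$ gives the bound $h^{(i)}_k \le C\tilde g_k$.

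These exponent estimates are a short case analysis on the signs of $k-i$ and $l-k$. When $l\ge k$ one rewrites $(i-l)s' = (i-k)s' + (k-l)s'$; the $k>i$ case then uses $\delta\le 1-s$ and the $k\le i$ case uses $\delta\le s'$, and in both the resulting net exponent is non-positive. When $l<k$ the truncation $l>i-5$ forces $|i-l|\le 4$ (in the $k\le i$ case) or lets us split into $l\ge i$ and $l<i$ (in the $k>i$ case), each subcase producing only a bounded error. This is the main obstacle, but it is routine, and the value $\delta=\min\{s',1-s\}$ is precisely sharp enough to beat both the $(s-1)$-factor when $k>i$ and the $s'$-factor when $k\le i$.

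For the consequence, apply the first part to $|u|$ in place of $u$, which is legitimate because $||u(x)|-|u(y)||\le|u(x)-u(y)|$ and hence $(g_k)_k\in\D^s(|u|)$. This yields $(C\tilde g_k)_k\in\D^s(|u|^\gamma_{2^{-i}})$ for every $i\in\z$ with the same $C$. Under the standing hypothesis $\M^{\gamma,*}u\not\equiv\infty$, the comparability \eqref{comparability for median maximal functions} together with Remark \ref{finiteness remark} gives $\M^{\gamma,*}u\in L^0(X)$, so Lemma \ref{sup lemma} applied to the family $\{|u|^\gamma_{2^{-i}}\}_{i\in\z}$ with common fractional $s$-gradient $(C\tilde g_k)_k$ delivers $(C\tilde g_k)_k\in\D^s(\M^{\gamma,*}u)$, as claimed.
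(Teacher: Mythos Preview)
Your proof is correct and follows essentially the same approach as the paper: both extract the $i$-dependent fractional $s$-gradient of $u-u^\gamma_{2^{-i}}$ from the proof of Theorem \ref{density of loc lip}, add $g_k$ to pass to $u^\gamma_{2^{-i}}$, and then verify the exponent inequalities needed to dominate this by $\tilde g_k$ uniformly in $i$; the case split you describe matches the paper's computation. You are in fact slightly more careful than the paper in two places: you explicitly pass to $|u|$ before invoking Lemma \ref{sup lemma} (since $\M^{\gamma,*}u=\sup_i|u|^\gamma_{2^{-i}}$, not $\sup_i u^\gamma_{2^{-i}}$), and you invoke the comparability \eqref{comparability for median maximal functions} to justify applying Remark \ref{finiteness remark}, which is stated for $\M^\gamma$ rather than $\M^{\gamma,*}$.
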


\begin{proof}
By the proof of Theorem \ref{density of loc lip}, the sequence $(Ch_k)_{k\in\z}$, where
\[
h_{k}=
\begin{cases}
g_k+2^{(i-k)(1-s)}\sum_{l>i-5}2^{(i-l)s'}(\M g_l^t)^{1/t}, &\text{ when } k>i\\
2^{(k-i)s}\sum_{l>i-5}2^{(i-l)s'}(\M g_l^t)^{1/t}, &\text{ when } k\le i
\end{cases},
\]
is a fractional $s$-gradient of our function $u-u_{2^{-i}}^\gamma$.  It follows that $(C\tilde h_k)_{k\in\z}$, where
\[
\tilde h_{k}=
\begin{cases}
g_k+2^{(i-k)(1-s)}\sum_{l>i-5}2^{(i-l)s'}(\M g_l^t)^{1/t}, &\text{ when } k>i\\
g_k+2^{(k-i)s}\sum_{l>i-5}2^{(i-l)s'}(\M g_l^t)^{1/t}, &\text{ when } k\le i
\end{cases},
\]
is a fractional $s$-gradient of $u_{2^{-i}}^\gamma$. Clearly, $g_k\le \tilde g_k$ almost everywhere for every $k$.

If $k>i$, then
\[
\begin{split}
2^{(i-k)(1-s)}\sum_{i-5<l<k}2^{(i-l)s'}(\M g_l^t)^{1/t}
&\le C\sum_{i-5<l<k}2^{(l-k)(1-s)}(\M g_l^t)^{1/t}
\end{split}
\]
and
\[
\begin{split}
2^{(i-k)(1-s)}\sum_{l\ge k}2^{(i-l)s'}(\M g_l^t)^{1/t}
&\le \sum_{l\ge k}2^{(k-l)s'}(\M g_l^t)^{1/t}.
\end{split}
\]
If $k\le i$, then
\[
\begin{split}
2^{(k-i)s}\sum_{l>i-5 }2^{(i-l)s'}(\M g_l^t)^{1/t}
&= 2^{(k-i)(s-s')}\sum_{l>i-5}2^{
(k-l)s'}(\M g_l^t)^{1/t}\\
&\le \sum_{l>k-5}2^{
(k-l)s'}(\M g_l^t)^{1/t}.
\end{split}
\]
Thus, $\tilde h_k\le C\tilde g_k$ almost everywhere, for every $k\in\z$, and so $(\tilde g_k)_{k\in\z}\in \D^s(u_{2^{-i}}^\gamma)$.

If $\M^{\gamma,*} u\not\equiv\infty$, then by Remark \ref{finiteness remark}, $\M^{\gamma,*} u\in L^0(X)$.  
Hence, by Lemma \ref{sup lemma}, $(C\tilde g_k)_{k\in\z}\in\D^s(\M^{\gamma,*}u)$.
\end{proof}

From the proof of the above lemma, we see that if $g$ is an $s$-gradient of $u$ and $0<t<\infty$, then there exists a constant 
$C>0$ such that $C(\M g^t)^{1/t}$ is an $s$-gradient of $u_{2^{-i}}^\gamma$ for every $i$.
Below, we show that an even better result holds. We need the following lemma, which is a special case of Lemma
\ref{lemma 1}.

\begin{lemma}\label{gradient for product} 
Let $0<s\le 1$ and let $S\subset X$ be a measurable set.
Let $u\colon X\to\re$ be a measurable function with $g\in \cD^s(u)$ and
let $\ph$ be a bounded $L$-Lipschitz function supported in $S$.
Then 
\[
h=\big(\|\varphi\|_{\infty} g+(2\|\varphi\|_{\infty}\big)^{1-s}L^s |u|)\ch S \in \cD^s(u\varphi).
\]
\end{lemma}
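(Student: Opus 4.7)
The plan is to verify the pointwise Haj\l asz inequality \eqref{eq: gradient} for $u\varphi$ with the candidate gradient $h$, by splitting the difference $|u(x)\varphi(x)-u(y)\varphi(y)|$ into a ``gradient of $u$'' part and a ``Lipschitz of $\varphi$'' part, and then interpolating the latter between the Lipschitz bound and the $L^\infty$ bound. Let $E$ be the exceptional null set for $g\in \cD^s(u)$; I will establish the inequality for all $x,y\in X\setminus E$. Since $\varphi$ vanishes outside $S$, if neither $x$ nor $y$ lies in $S$ both sides are zero and there is nothing to do; by symmetry I may therefore assume $x\in S$ (so $\ch S(x)=1$).

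First I would apply the standard product decomposition
\[
|u(x)\varphi(x)-u(y)\varphi(y)|\le |\varphi(x)|\,|u(x)-u(y)|+|u(y)|\,|\varphi(x)-\varphi(y)|.
\]
The first summand is bounded by $\|\varphi\|_\infty d(x,y)^s(g(x)+g(y))$ using $g\in \cD^s(u)$, which accounts exactly for the $\|\varphi\|_\infty g\,\ch S$ contribution to $h$ at the point $x$ (and at $y$, provided $y\in S$, which can be arranged symmetrically). For the second summand, the key step is the interpolation
\[
|\varphi(x)-\varphi(y)|\le \min\bigl\{L\,d(x,y),\,2\|\varphi\|_\infty\bigr\}\le \bigl(Ld(x,y)\bigr)^s\bigl(2\|\varphi\|_\infty\bigr)^{1-s},
\]
where I have used the elementary inequality $\min\{a,b\}\le a^s b^{1-s}$ valid for $a,b\ge 0$ and $s\in[0,1]$. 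This gives
\[
|u(y)|\,|\varphi(x)-\varphi(y)|\le d(x,y)^s\, L^s\bigl(2\|\varphi\|_\infty\bigr)^{1-s}|u(y)|,
\]
which is precisely absorbed into the second summand of $h$ evaluated at $y$ (or at $x$, if it is $y\notin S$).

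Finally I would tidy up the support bookkeeping. When $x,y\in S$ the two estimates above assemble directly into $|(u\varphi)(x)-(u\varphi)(y)|\le d(x,y)^s(h(x)+h(y))$. When only $x\in S$, then $\varphi(y)=0$, so $(u\varphi)(y)=0$ and the interpolation bound above yields $|(u\varphi)(x)|\le d(x,y)^s L^s(2\|\varphi\|_\infty)^{1-s}|u(x)|\le d(x,y)^s h(x)$, again consistent with the claim (and the missing $h(y)$ term only helps). I do not expect a real obstacle: no chaining, averaging, or maximal function appears, and the only subtle point is getting the constant $(2\|\varphi\|_\infty)^{1-s}L^s$ right, which comes directly from the $\min$--interpolation. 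This is the one-step, $k$-free version of Lemma \ref{lemma 1} and its proof is essentially the same as the argument already used there for $\hat h_k$, specialized to $s\le 1$ and a single gradient.
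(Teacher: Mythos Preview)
Your proof is correct and is exactly the argument the paper has in mind: the paper simply records this lemma as ``a special case of Lemma~\ref{lemma 1}'' without a separate proof, and your product decomposition together with the interpolation $\min\{Ld(x,y),2\|\varphi\|_\infty\}\le (Ld(x,y))^s(2\|\varphi\|_\infty)^{1-s}$ is precisely what produces the constant $(2\|\varphi\|_\infty)^{1-s}L^s$ from the two estimates $\hat h_k$ and $\tilde h_k$ there. Your case analysis for the support (both points in $S$ versus only one) is also handled cleanly.
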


\begin{lemma}\label{gradient for discrete median maximal function} 
Let $0<s\le 1$  and $0<\gamma\le 1/2$.
Let $u\in L^0(X)$ and $g\in\cD^s(u)\cap L^0(X)$. 
Then there exists a constant $C>0$ such that $C\M^{\gamma/C}g$ is an $s$-gradient of $u^\gamma_r$
for every $r>0$. 
Consequently, if $\M^{\gamma,*} u\not\equiv\infty$, then $C\M^{\gamma/C} g$ is an $s$-gradient of $\M^{\gamma,*} u$.
\end{lemma}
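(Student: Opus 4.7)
The goal is the pointwise bound
\[
|u^\gamma_r(x) - u^\gamma_r(y)| \le C d(x,y)^s \bigl(\M^{\gamma/C}g(x) + \M^{\gamma/C}g(y)\bigr)
\]
for almost every $x,y \in X$. The workhorse estimate will be
\[
|m^\gamma_u(B) - u(x)| \le C \rho^s \M^{\gamma/C}g(x),
\]
valid at almost every $x$ whenever $B$ has radius $\rho$ and $B \subset B(x, C_0 \rho)$. I would prove this via Lemma \ref{median lemma}(d),(f) to pass from $|m^\gamma_u(B) - u(x)|$ to $m^\gamma_{|u-u(x)|}(B)$, then apply the $s$-gradient inequality $|u(z) - u(x)| \le (C_0\rho)^s(g(z)+g(x))$ pointwise for $z \in B$, take the $\gamma$-median on both sides using Lemma \ref{median lemma}(b),(e), and finally invoke Lemma \ref{median lemma}(c) together with $g \le \M^{\gamma/C}g$ almost everywhere.

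With this in hand, I split into two cases depending on whether $d(x,y) \ge r$ or $d(x,y) < r$. For $d(x,y) \ge r$, the plan is to use
\[
|u^\gamma_r(x) - u^\gamma_r(y)| \le |u^\gamma_r(x) - u(x)| + |u(x) - u(y)| + |u(y) - u^\gamma_r(y)|.
\]
Since only balls $B_i$ with $2B_i \ni x$ contribute to $u^\gamma_r(x)$, and these are contained in $B(x,3r)$, the workhorse estimate bounds each outer term by $Cr^s \M^{\gamma/C}g(\cdot) \le Cd(x,y)^s \M^{\gamma/C}g(\cdot)$; the middle term is controlled directly by the defining inequality of $g \in \cD^s(u)$, after comparing $g$ with $\M^{\gamma/C}g$ pointwise almost everywhere.

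For $d(x,y) < r$, I would exploit $\sum_i \varphi_i \equiv 1$ to rewrite
\[
u^\gamma_r(x) - u^\gamma_r(y) = \sum_i \bigl(m^\gamma_u(B_i) - u(x)\bigr)\bigl(\varphi_i(x) - \varphi_i(y)\bigr),
\]
which is a sum of a bounded number of terms, all with $B_i \subset B(x,4r)$. The workhorse estimate bounds each $|m^\gamma_u(B_i) - u(x)|$ by $Cr^s \M^{\gamma/C}g(x)$, while the $(C/r)$-Lipschitz property of $\varphi_i$ gives $|\varphi_i(x) - \varphi_i(y)| \le (C/r)d(x,y)$. Combined with bounded overlap this yields a bound $C r^{s-1} d(x,y) \M^{\gamma/C}g(x)$, which is at most $C d(x,y)^s \M^{\gamma/C}g(x)$ precisely because $s \le 1$ and $d(x,y) < r$. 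This is the one place where the hypothesis $s \le 1$ is essential, and it is the main technical point I expect to be careful about.

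For the ``consequently'' part, observe that $g$ is simultaneously an $s$-gradient of $|u|$, so the first half of the lemma (applied to $|u|$) furnishes the common $s$-gradient $C\M^{\gamma/C}g$ for every $|u|^\gamma_{2^k}$, $k \in \z$. Under the assumption $\M^{\gamma,*}u \not\equiv \infty$, Remark \ref{finiteness remark} gives $\M^{\gamma,*}u \in L^0(X)$, and the $s$-gradient analogue of Lemma \ref{sup lemma}, which is immediate from the definition since a single function dominates all the pointwise gradients simultaneously, transfers $C\M^{\gamma/C}g$ to $\sup_k |u|^\gamma_{2^k} = \M^{\gamma,*}u$.
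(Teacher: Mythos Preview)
Your proof is correct and shares its core with the paper's: the same ``workhorse'' estimate $|m^\gamma_u(B_i)-u(x)|\le Cr^s\M^{\gamma/C}g(x)$, obtained via Lemma~\ref{median lemma} exactly as you describe, and the same appeal to Remark~\ref{finiteness remark} and the sup lemma for the ``consequently'' clause. The only structural difference is in how the pointwise $s$-gradient inequality for $u^\gamma_r$ is assembled. You split directly into the regimes $d(x,y)\ge r$ and $d(x,y)<r$ and argue by hand; the paper instead writes $u^\gamma_r=u+\sum_i(m^\gamma_u(B_i)-u)\ph_i$ and invokes the Leibniz-type Lemma~\ref{gradient for product} to obtain an $s$-gradient $(g+Cr^{-s}|u-m^\gamma_u(B_i)|)\ch{2B_i}$ for each summand, then sums using bounded overlap. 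That lemma already encodes your two-case computation (the $s\le 1$ ingredient is hidden in the exponent $L^s$ there), so the paper's route is shorter to state but relies on an auxiliary result, while yours is self-contained and makes the role of $s\le 1$ explicit. Either way the content is the same.
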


\begin{proof} 
By the definition of the discrete $\gamma$-median convolution and the properties of the functions $\ph_i$, 
\[
u^\gamma_r=u+\sum_{i\in\n}(m^\gamma_u(B_i)-u)\ph_i,
\]
and by Lemma \ref{gradient for product}, the function
\[
(g+Cr^{-s}|u-m^\gamma_u(B_i)|)\ch{2B_i}
\]
is an $s$-gradient of our function $(m^\gamma_u(B_i)-u)\varphi_i$ for each $i$.

Let $x\in 2B_i\setminus E$, where $E$ is the exceptional set for \eqref{eq: gradient}. 
Using Lemma \ref{median lemma}, we obtain
\[
\begin{split}
|u(x)-m_u^{\gamma}(B_i)|&\le m_{|u-u(x)|}^{\gamma}(B_i)
\le Cr^s\big( m_{g}^{\gamma/C}(B_i)+g(x)\big)\\
&\le Cr^s\big( m_{g}^{\gamma/C}(B(x,3r))+g(x)\big)\\
&\le Cr^s(g(x)+\M^{\gamma/C} g(x)).
\end{split}
\]
Since $g(x)\le\M^{\gamma/C}g(x)$ for almost every $x$ and since the balls $2B_i$ have bounded overlap, it follows that
\[
C\M^{\gamma/C} g\in \cD^s(u^\gamma_r),
\]
for every $r>0$. 

Assume then that $\M^{\gamma,*} u\not\equiv\infty$. 
Then, by Remark \ref{finiteness remark}, $\M^{\gamma,*}u\in L^0(X)$, and so, by Lemma \ref{sup lemma}, 
\[
C\M^{\gamma/C} g\in \cD^s(\M^{\gamma,*} u),
\]
from which the second claim follows.
\end{proof}

\begin{remark} 
If $X$ has the nonempty spheres property (Definition \ref{nonempty spheres}) with $R=\infty$, 
then the proof of Theorem \ref{density of loc lip} given in Section \ref{sec: easy proof} shows that, for $0<\gamma\le 1/2$ and 
$0<s\le 1,$ there exists a constant $C>0$ such that $(C\tilde g_k)_{k\in\z}$, where 
\[
\tilde g_k=\sup_{l\in\z} 2^{-|l-k|\delta}\M^{\gamma/C}g_l
\]
and $\delta=\min\{s,1-s\}$, is a fractional $s$-gradient of $\M^{\gamma,*}u$, whenever 
$u\in L^0(X)$, $(g_k)_{k\in\z}\in\D^s(u)$ and $\M^{\gamma,*}u\not\equiv\infty$.
\end{remark}

The proof of the following lemma is essentially contained in the proofs of Theorems 4.5 and 4.6 in \cite{HeTu}, but
for the convenience of the reader, we repeat the proof.

\begin{lemma}\label{hk lemma}
Let $0<p,q,\delta<\infty$ and $0<t<\min\{p,q\}$. Let $(g_k)_{k\in\z}\in l^q(L^p(X))\cup L^p(X,l^q)$ and let
\[
h_k=\sum_{l\in\z}2^{-|l-k|\delta}(\M g_l^t)^{1/t}
\]
for every $k\in\z$. 
Then there is a constant $C=C(p,q,t,\delta,c_d)$ such that
\[
\|(h_k)_{k\in\z}\|_{l^q(L^p(X))}\le C\|(g_k)_{k\in\z}\|_{l^q(L^p(X))},
\]
if $(g_k)_{k\in\z}\in l^q(L^p(X))$, and
\[
\|(h_k)_{k\in\z}\|_{L^p(X,l^q)}\le C\|(g_k)_{k\in\z}\|_{L^p(X,l^q)},
\]
if $(g_k)_{k\in\z}\in L^p(X,l^q)$.
\end{lemma}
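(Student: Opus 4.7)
The plan is to establish the two bounds separately, but both reduce (after removing the maximal operator via the relevant maximal theorem) to an application of Lemma \ref{summing lemma} with $a=2^\delta$ or a power thereof. The condition $t<\min\{p,q\}$ is precisely what makes $p/t>1$ and $q/t>1$, so that the scalar and vector-valued maximal inequalities apply to the sequence $(g_l^t)$ in the appropriate scale.

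For the $l^q(L^p(X))$ bound I would proceed in three steps. First, since $p/t>1$, the Hardy--Littlewood maximal theorem in $L^{p/t}$ yields the scalar bound
\[
\|(\M g_l^t)^{1/t}\|_{L^p(X)}\le C\|g_l\|_{L^p(X)}
\]
for every $l\in\z$. Second, to move the outer sum past the $L^p$ norm of $h_k$, I would invoke \eqref{norm ie} with $\tilde p=\min\{1,p\}$, obtaining
\[
\|h_k\|_{L^p(X)}^{\tilde p}\le \sum_{l\in\z}2^{-|l-k|\delta\tilde p}\|(\M g_l^t)^{1/t}\|_{L^p(X)}^{\tilde p}\le C\sum_{l\in\z}2^{-|l-k|\delta\tilde p}\|g_l\|_{L^p(X)}^{\tilde p}.
\]
Third, raising to the $q/\tilde p$ and summing in $k$, I would apply Lemma \ref{summing lemma} with $a=2^{\delta\tilde p}$, $b=q/\tilde p$, and $c_l=\|g_l\|_{L^p(X)}^{\tilde p}$ to conclude $\sum_k \|h_k\|_{L^p(X)}^q\le C\sum_l\|g_l\|_{L^p(X)}^q$.

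For the $L^p(X,l^q)$ bound I would reverse the order: first apply the summing lemma pointwise, then apply Fefferman--Stein. Concretely, with $c_l=(\M g_l^t(x))^{1/t}$, Lemma \ref{summing lemma} (with $a=2^\delta$, $b=q$) gives pointwise
\[
\Big(\sum_{k\in\z}h_k(x)^q\Big)^{1/q}\le C\Big(\sum_{l\in\z}(\M g_l^t(x))^{q/t}\Big)^{1/q}.
\]
Taking the $L^p(X)$ norm and rewriting the right-hand side as $\bigl\|(\M g_l^t)_l\bigr\|_{L^{p/t}(X,l^{q/t})}^{1/t}$, I would invoke the vector-valued Fefferman--Stein maximal theorem \eqref{Fefferman-Stein} for the sequence $(g_l^t)_l$ in $L^{p/t}(X,l^{q/t})$, which is valid because $p/t,q/t>1$, to obtain
\[
\Big\|\Big(\sum_{k\in\z}h_k^q\Big)^{1/q}\Big\|_{L^p(X)}\le C\Big\|\Big(\sum_{l\in\z}g_l^q\Big)^{1/q}\Big\|_{L^p(X)}.
\]

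The only real subtlety I expect is careful bookkeeping of exponents in the $l^q(L^p)$ case when $p<1$: the argument must pass through the $\tilde p$-th power of the quasi-norm before Lemma \ref{summing lemma} is applied, and one must verify that $a=2^{\delta\tilde p}>1$ (true since $\delta,\tilde p>0$) so that Lemma \ref{summing lemma} is applicable. Everything else is mechanical.
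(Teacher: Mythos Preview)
Your proposal is correct and matches the paper's own proof essentially step for step: both arguments use \eqref{norm ie} followed by the scalar Hardy--Littlewood maximal theorem and Lemma \ref{summing lemma} for the $l^q(L^p)$ case, and Lemma \ref{summing lemma} pointwise followed by Fefferman--Stein in $L^{p/t}(X,l^{q/t})$ for the $L^p(X,l^q)$ case. The bookkeeping with $\tilde p$ and the verification that $a>1$ are exactly the right points of care.
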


\begin{proof}
Let $\tilde p=\min\{1,p\}$. Using \eqref{norm ie} and the Hardy--Littlewood maximal theorem, we have 
\[
\|h_k\|_{L^p(X)}^{\tilde p}\le\sum_{l\in\z}2^{-|l-k|\delta\tilde p}\|(\M g_l^t)^{1/t}\|_{L^p(X)}^{\tilde p}
\le C\sum_{l\in\z}2^{-|l-k|\delta\tilde p}\|g_l\|_{L^p(X)}^{\tilde p}.
\]
Hence, by Lemma \ref{summing lemma},
\[
\|(h_k)\|_{l^q(L^p(X))}^{q}\le C\sum_{k\in\z}\Big(
\sum_{l\in\z}2^{-|l-k|\delta\tilde p}\|g_l\|_{L^p(X)}^{\tilde p}\Big)^{q/\tilde p}
\le C
\sum_{l\in\z}\|g_l\|_{L^p(X)}^{q}
\]
and the first claim follows.

Since 
\[
\|(h_k)\|_{l^q}^q=\sum_{k\in\z}\Big(
\sum_{l\in\z}2^{-|l-k|\delta}(\M g_l^t)^{1/t}\Big)^{q}\le C\sum_{k\in\z}\big(\M g_k^t\big)^{q/t}
\]
by Lemma \ref{summing lemma}, the Fefferman--Stein maximal theorem implies that
\[
\|(h_k)\|_{L^p(X;l^q)}\le C \|(M g_k^t)\|_{L^{p/t}(X;\, l^{q/t})}^{1/t}
\le C\|(g_k^t)_{k\in\z}\|_{L^{p/t}(X;\, l^{q/t})}^{1/t},
=C\|(g_k)\|_{L^{p}(X,l^{q})},
\]
which proves the second claim.
\end{proof}

Next we show that the discrete median maximal operator is bounded on 
$\cF(X)$ and use this result to prove a capacitary weak-type estimate for the 
median maximal function.

\begin{theorem}\label{median maximal op bounded}
Let $0<\gamma\le 1/2$. Let $0<s<1$, $0<p<\infty$, $0<q<\infty$ and 
$\cF\in\{N^s_{p,q},M^s_{p,q}\}$ or $0<s\le 1$, $0<p<\infty$ and $\cF=M^s_{p,\infty}=M^{s,p}$. 
Then there exists a constant $C=C(c_d,s,p,q,\gamma)$ such that
\[
\|\M^{\gamma,*}u\|_{\cF(X)}\le C\|u\|_{\cF(X)}
\]
for all $u\in \cF(X)$. 
\end{theorem}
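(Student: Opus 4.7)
The plan is to split the norm into the $L^p$ part and the (fractional) gradient part, and handle each separately using the machinery already developed.

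\textbf{Step 1: Bounding the $L^p$ norm.} Since $\M^{\gamma,*}u \le C\M^{\gamma/C}u$ by \eqref{comparability for median maximal functions}, estimate \eqref{median maximal operator bounded in Lp} (taking $0<t<p$ in \eqref{M iso} and applying the Hardy--Littlewood maximal theorem) gives
\[
\|\M^{\gamma,*}u\|_{L^p(X)} \le C\|u\|_{L^p(X)}.
\]
In particular, $\M^{\gamma,*}u \not\equiv \infty$ whenever $u\in L^p(X)$, so Remark \ref{finiteness remark} guarantees $\M^{\gamma,*}u \in L^0(X)$. This is essential to invoke Lemmas \ref{frac grad for med max} and \ref{gradient for discrete median maximal function}.

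\textbf{Step 2: Gradient bound in the Besov/Triebel--Lizorkin case $0<s<1$.} Pick any fractional $s$-gradient $(g_k)_{k\in\z}\in\D^s(u)$ realizing the seminorm up to $\varepsilon$, and fix $0<t<\min\{p,q\}$. By Lemma \ref{frac grad for med max} (applicable by Step 1), the sequence $(C\tilde g_k)_{k\in\z}$ with
\[
\tilde g_k=\sum_{l\in\z}2^{-|l-k|\delta}(\M g_l^t)^{1/t}, \qquad \delta=\min\{s',1-s\},
\]
is a fractional $s$-gradient of $\M^{\gamma,*}u$. Plugging this sequence into Lemma \ref{hk lemma} yields
\[
\|\M^{\gamma,*}u\|_{\dot N^s_{p,q}(X)} \le C\|(\tilde g_k)\|_{l^q(L^p(X))} \le C\|(g_k)\|_{l^q(L^p(X))},
\]
and similarly for the $L^p(X,l^q)$-norm in the Triebel--Lizorkin case. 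Taking infimum over $(g_k)\in\D^s(u)$ finishes this case.

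\textbf{Step 3: The Haj\l asz case $\cF = M^{s,p}$, $0<s\le 1$.} Here we cannot invoke Lemma \ref{frac grad for med max} (which needs $s<1$), so we use Lemma \ref{gradient for discrete median maximal function} instead: for every $g\in\cD^s(u)\cap L^0(X)$, the function $C\M^{\gamma/C}g$ is an $s$-gradient of $\M^{\gamma,*}u$. Combining with the $L^p$-bound \eqref{median maximal operator bounded in Lp} for the median maximal operator, we get
\[
\|\M^{\gamma,*}u\|_{\dot M^{s,p}(X)} \le C\|\M^{\gamma/C}g\|_{L^p(X)} \le C\|g\|_{L^p(X)},
\]
and taking the infimum over $g\in\cD^s(u)$ gives the required estimate.

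\textbf{Main obstacle.} The nontrivial input is the construction of a usable fractional $s$-gradient (respectively $s$-gradient) for $\M^{\gamma,*}u$; once Lemmas \ref{frac grad for med max} and \ref{gradient for discrete median maximal function} are in hand, the proof reduces to invoking the norm estimate of Lemma \ref{hk lemma} (which packages the Fefferman--Stein vector-valued maximal inequality together with Lemma \ref{summing lemma}) or the scalar $L^p$-boundedness of $\M^{\gamma/C}$. The only subtle point is ensuring $\M^{\gamma,*}u\in L^0(X)$ so these gradient lemmas apply, and this is taken care of by doing the $L^p$-bound first.
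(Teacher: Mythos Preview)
Your proof is correct and follows essentially the same route as the paper: first the $L^p$ bound via \eqref{comparability for median maximal functions} and \eqref{median maximal operator bounded in Lp}, then Lemma~\ref{frac grad for med max} combined with Lemma~\ref{hk lemma} for the Besov/Triebel--Lizorkin seminorm, and Lemma~\ref{gradient for discrete median maximal function} together with \eqref{median maximal operator bounded in Lp} for the Haj\l asz case. Your explicit remark that the $L^p$ bound forces $\M^{\gamma,*}u\not\equiv\infty$ (so the gradient lemmas apply) is a detail the paper leaves implicit.
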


\begin{proof}
By the boundedness of the discrete median maximal operator on $L^p$, which follows from \eqref{comparability for median maximal functions} and \eqref{median maximal operator bounded in Lp}, we have 
\[
\|\M^{\gamma,*}u\|_{L^p(X)}\le C\|u\|_{L^p(X)}.
\]
To estimate the $s$-gradient, suppose first that $0<s<1$, $0<p,q<\infty$, $u\in N^s_{p,q}(X)$ and $(g_k)_{k\in\z}\in \D^s(u)$. 
Let $0<t<\min\{p,q\}$. By Lemma \ref{frac grad for med max}, the sequence $(C\tilde g_k)_{k\in\z}$, where
\[
\tilde g_k=\sum_{l\in\z}2^{-|l-k|\delta}(\M g_l^t)^{1/t}
\]
and $\delta=\min\{s',1-s\}$,
is a fractional $s$-gradient of $\M^{\gamma,*}u$ and by Lemma \ref{hk lemma},
\[
\|(\tilde g_k)_{k\in\z}\|_{l^q(L^p(X))}\le C\|(g_k)_{k\in\z}\|_{l^q(L^p(X))}.
\]
Thus, $\|\M^{\gamma,*}u\|_{\dot N^s_{p,q}(X)}\le C\|u\|_{\dot N^s_{p,q}(X)}$. 

The proof for the estimate $\|\M^{\gamma,*}u\|_{\dot M^s_{p,q}(X)}\le C\|u\|_{\dot M^s_{p,q}(X)}$ is analogous.
Finally, the estimate $\|\M^{\gamma,*}u\|_{\dot M^{s,p}(X)}\le C\|u\|_{\dot M^{s,p}(X)}$ follows from 
Lemma \ref{gradient for discrete median maximal function} and \eqref{median maximal operator bounded in Lp}.
\end{proof}

\begin{theorem}\label{cap weak type}
Let $0<\gamma\le 1/2$. 
Let $0<s<1$, $0<p<\infty$, $0<q<\infty$ and $\cF\in\{N^s_{p,q},M^s_{p,q}\}$ or $0<s\le 1$, $0<p<\infty$ and 
$\cF=M^s_{p,\infty}=M^{s,p}$. 
Then there exists a constant $C=C(c_d,s,p,q,\gamma)$ such that
\[
C_{\cF}\big(\{x\in X: \M^\gamma u(x) >\lambda\}\big)
\le C\lambda^{-p}\|u\|_{\cF(X)}^p
\]
for all $u\in \cF(X)$ and for all $\lambda>0$.
\end{theorem}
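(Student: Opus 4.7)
The plan is to reduce the weak-type capacitary estimate for $\M^\gamma u$ to the strong-type bound for the discrete median maximal operator $\M^{\gamma,*}$ supplied by Theorem \ref{median maximal op bounded}, using the pointwise comparability \eqref{comparability for median maximal functions} together with the lower semicontinuity of $\M^{\gamma,*}$.

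More precisely, fix $\lambda>0$ and let $U_\lambda=\{x\in X:\M^\gamma u(x)>\lambda\}$. By \eqref{comparability for median maximal functions}, there is a constant $C_0=C_0(c_d)$ such that $\M^\gamma u\le C_0\,\M^{\gamma/C_0,*}u$ pointwise, so
\[
U_\lambda\subset V_\lambda:=\bigl\{x\in X:\M^{\gamma/C_0,*}u(x)>\lambda/C_0\bigr\}.
\]
Since $\M^{\gamma/C_0,*}u$ is a supremum of the continuous functions $|u|^{\gamma/C_0}_{2^k}$, it is lower semicontinuous, and therefore $V_\lambda$ is open. Consequently, $V_\lambda$ is an open neighbourhood of $U_\lambda$ and the function
\[
v:=\frac{C_0}{\lambda}\,\M^{\gamma/C_0,*}u
\]
satisfies $v\ge 1$ on $V_\lambda\supset U_\lambda$, so $v\in\mathcal A_\cF(U_\lambda)$ provided $v\in\cF(X)$.

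The containment $v\in\cF(X)$ and the required estimate both follow from Theorem \ref{median maximal op bounded}: since $u\in\cF(X)$, we have $\M^{\gamma/C_0,*}u\in\cF(X)$ with $\|\M^{\gamma/C_0,*}u\|_{\cF}\le C_1\|u\|_{\cF}$, where $C_1=C_1(c_d,s,p,q,\gamma)$. By homogeneity of the (quasi)norm on $\cF$,
\[
\|v\|_{\cF}^p=\Bigl(\tfrac{C_0}{\lambda}\Bigr)^p\|\M^{\gamma/C_0,*}u\|_{\cF}^p\le\Bigl(\tfrac{C_0 C_1}{\lambda}\Bigr)^p\|u\|_{\cF}^p.
\]
Combining this with $C_\cF(U_\lambda)\le\|v\|_{\cF}^p$ yields the desired bound with $C=(C_0 C_1)^p$.

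The main substantive step is the strong-type bound of Theorem \ref{median maximal op bounded}; once that is in hand, the rest of the argument is essentially the standard reduction from weak-type capacitary estimates to strong-type norm estimates, the only point needing care being the lower semicontinuity of $\M^{\gamma,*}u$, which guarantees that the superlevel set used to majorize $U_\lambda$ is open (and thus qualifies as the required neighbourhood in the definition of admissible functions for $C_\cF$).
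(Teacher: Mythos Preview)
Your proof is correct and follows essentially the same approach as the paper: use the pointwise comparability \eqref{comparability for median maximal functions} to pass from $\M^\gamma u$ to the discrete median maximal function $\M^{\gamma/C,*}u$, invoke its lower semicontinuity so that the superlevel set is open and the rescaled function is admissible, and then apply Theorem \ref{median maximal op bounded}. Your write-up is slightly more explicit about why the test function is admissible, but the argument is the same.
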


\begin{proof}
By \eqref{comparability for median maximal functions}, $\M^\gamma u\le C\M^{\gamma/C,*}u$. 
Hence, Theorem \ref{median maximal op bounded} together with the lower semicontinuity of $\M^{\gamma/C,*} u$ implies that
\[
\begin{split}
C_{\cF}\big(\{x\in X: \M^\gamma u(x) >\lambda\}\big)
&\le C_{\cF}\big(\{x\in X: C\lambda^{-1}\M^{\gamma/C,*} u(x) >1 \}\big)\\
&\le C\lambda^{-p}\|\M^{\gamma/C,*}u\|_{\cF(X)}^p\\
&\le C\lambda^{-p}\|u\|_{\cF(X)}^p,
\end{split}
\]
and the claim follows.
\end{proof}

\begin{proof}[Proof of Theorem \ref{main thm}]
We assume first that $u\in\cF(X)$. The general case, $u\in\dot\cF(X)$, then follows by a localization argument.

We have to show that the limit $\lim_{r\to 0}m_u^{\gamma}(B(x,r))=u^*(x)$ exists outside a set of zero 
$\cF$-capacity and that $u^*$ is an $\cF$-quasicontinuous representative of $u$.

By Theorem \ref{density of loc lip}, if $0<q<\infty$, and by \cite[Proposition 4.5]{SYY}, if $q=\infty$, 
there are continuous functions $u_i$, $i\in\n$, such that 
\[
\|u-u_i\|_{\cF(X)}^p<2^{-(1+p)i}
\] 
for each $i\in\n$. 

For each $n,i,k\in\n$, let 
\[
A_{n,i}=\big\{x: \M^{1/(2n)}(u-u_i)(x)>2^{-i}\big\}
\quad\text{ and }\quad
B_{n,k}=\bigcup_{i\ge k} A_{n,i}.
\]
We will show that the set
\[
E=\bigcup_{n\in\n} E_n,
\] 
where
\[
E_n=\bigcap_{k\in\n} B_{n,k}
\] 
is of zero capacity and that the limit of medians of $u$ exists outside $E$. 

By Theorem \ref{cap weak type}, the capacity of each set $A_{n,i}$ has an upper bound,
\[
C_{\cF}(A_{n,i})\le C2^{ip}\|u-u_i\|_{\cF(X)}^p<C2^{-i}
\]
and hence, by \eqref{eq: r-subadd}, 
\[
C_{\cF}(B_{n,k})\le C2^{-k}.
\]
This implies that $C_{\cF}(E)=0$.
\medskip
 
Next we show that the continuous approximations $u_i$ of $u$ converge uniformly to a continuous function outside $B_{n,k}$ 
and that the limit function is the limit of $\gamma$-medians of $u$. 
Let $1/n\le \gamma\le 1/2$. By Lemma \ref{median lemma}, we have
\[
\begin{split}
|u_i(x)-m_u^\gamma(B(x,r))|
&=|m_{(u-u_i(x))}^\gamma(B(x,r))|\le m_{|u-u_i(x)|}^{1/n}(B(x,r))\\
&\le m_{|u-u_i|}^{1/(2n)}(B(x,r))+m_{|u_i-u_i(x)|}^{1/(2n)}(B(x,r)),
\end{split}
\]
which, together with the fact that $m_{|u_i-u_i(x)|}^{1/(2n)}(B(x,r))\to 0$ as $r\to 0$ by the continuity of $u_i$, implies that for each 
$x\in X\setminus A_{n,i}$,
\[
\limsup_{r\to 0}|u_i(x)-m_u^\gamma(B(x,r))|\le \M^{1/(2n)}(u-u_i)(x)\le 2^{-i}.
\]
Now, for each $x\in X\setminus B_{n,k}$ and $i,j\ge k$, we have
\[
\begin{split}
|u_i(x)-u_j(x)|
&\le \limsup_{r\to 0}\,\Big(|u_i(x)-m_u^\gamma(B(x,r))|+|u_j(x)-m_u^\gamma(B(x,r))|\Big)\\
&\le 2^{-i}+2^{-j},
\end{split}
\]
which implies that $(u_i)$ converges uniformly in $X\setminus B_{n,k}$ to a continuous function $v$.
Since
\[
\limsup_{r\to 0}|v(x)-m_u^\gamma(B(x,r))|
\le |v(x)-u_i(x)|+\limsup_{r\to 0}|u_i(x)-m_u^\gamma(B(x,r))|,
\]
it follows that
\[
v(x)=\lim_{r\to0}m_u^\gamma(B(x,r))=u^*(x)
\]
for every $x\in X\setminus B_{n,k}$ and $1/n\le \gamma\le 1/2$. Hence the limit
\[
\lim_{r\to0}m_u^\gamma(B(x,r))=u^*(x)
\] 
exist for every $x\in X\setminus E$ and $0<\gamma\le 1/2$. 
\medskip

Now, assume that $u\in\dot\cF(X)$. Let $x_0\in X$ and, for each $k\in\n$, let $\ph_k$ be a Lipschitz function such that $\ph=1$ on $B(x_0,k)$ and $\ph=0$ in $X\setminus B(x_0,2k)$. Then, by \cite[Lemma 3.10 and Remark 3.11]{HIT}, $u_k=u\ph_k\in\cF(X)$. 
By the first part of the proof, for every $k$, there exists a set $E_k$ with $C_{\cF}(E_k)=0$ such that the limit 
\[
\lim_{r\to0}m_{u_k}^\gamma(B(x,r))=u_k^*(x)
\] 
exists for every $x\in X\setminus E_k$ and $0<\gamma\le 1/2$.  Since 
\[
\lim_{r\to0}m_{u}^\gamma(B(x,r))=\lim_{r\to0}m_{u_k}^\gamma(B(x,r))
\] 
in $B(x_0,k)$, it follows that the limit
\[
\lim_{r\to0}m_{u}^\gamma(B(x,r))=u^*(x)
\] 
exists for every $x\in X\setminus \cup_{k\in\n} E_k$. By \eqref{eq: r-subadd}, $C_{\cF}(\cup_{k\in\n}E_k)=0$. 

Let $\eps>0$. By the first part of the proof, for every $k$, there exists a set $U_k$ with $C_{\cF}(U_k)<2^{-k}\eps$ such that 
$u_k^*$ is continuous in $X\setminus U_k$. 
Since $u^*=u_k^*$ in $B(x_0,k)$, it follows that $u^*$ is continuous in
$X\setminus \cup_{k\in\n}U_k$. By \eqref{eq: r-subadd}, $C_{\cF}(\cup_{k\in\n}U_k)<C\eps$. 
Thus, $u^*$ is $\cF$-quasicontinuous.
\end{proof}

\section{Lebesgue points}\label{sec: integral averages}

For restricted values of $p$ and $q$, counterparts of Theorems \ref{main thm}, 
\ref{median maximal op bounded} and \ref{cap weak type} hold also for integral averages. 
The main result of this section is the following.

\begin{theorem}\label{leb points}
Let $\cF=N^s_{p,q}$, where $0<s<1$, $Q/(Q+s)<p<\infty$, $0<q<\infty$  or
$\cF=M^{s}_{p,q}$, where $0<s<1$ and $Q/(Q+s)<p,q<\infty$ or $0<s\le 1$, $Q/(Q+s)<p<\infty$ and $q=\infty$.
If $u\in\dot\cF(X)$, then $\cF$-quasievery point is a Lebesgue point of $u$.
Moreover, $u^*(x)=\lim_{r\to 0}u_{B(x,r)}$ is an $\cF$-quasicontinuous representative of $u$. 
\end{theorem}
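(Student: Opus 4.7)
The plan is to mirror the proof of Theorem \ref{main thm} with integral averages and the ordinary Hardy--Littlewood maximal function in place of the median versions. The parameter restriction $p>Q/(Q+s)$ (and $q>Q/(Q+s)$ in the Triebel--Lizorkin case) guarantees local integrability of $u$, gives meaning to the integral averages $u_{B(x,r)}$, and allows us to choose an exponent $t>Q/(Q+s)$ with $t<\min\{p,q\}$ so that Lemma \ref{lemma 2} applies and the Fefferman--Stein inequality is available on the scales $L^{p/t}(X;l^{q/t})$ and $L^{p/t}(X)$.

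The first step is a counterpart of Theorem \ref{median maximal op bounded}: the discrete maximal operator $\M^*u = \sup_k |u|_{2^k}$ is bounded on $\cF(X)$. Repeating the proof of Lemma \ref{frac grad for med max} with integral averages replacing $\gamma$-medians, and using Lemma \ref{lemma 2} in place of Lemma \ref{med pointwise}, one obtains a fractional $s$-gradient of $u_{2^{-i}}$ of the form $(C\tilde g_k)_{k\in\z}$ with
\[
\tilde g_k = \sum_{l\in\z} 2^{-|l-k|\delta} (\M g_l^t)^{1/t}, \qquad \delta = \min\{s',1-s\},
\]
uniformly in $i\in\z$; Lemma \ref{sup lemma} then upgrades this to a fractional $s$-gradient of $\M^* u$. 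Lemma \ref{hk lemma} converts this into the required $\cF$-norm estimate $\|\M^* u\|_{\cF(X)} \le C\|u\|_{\cF(X)}$. The case $q=\infty$ is handled analogously using Lemma \ref{gradient for discrete median maximal function} (with $\M$ replacing $\M^\gamma$) together with \eqref{median maximal operator bounded in Lp}. From this, the capacitary weak-type estimate
\[
C_\cF\bigl(\{x\in X : \M u(x) > \lambda\}\bigr) \le C\lambda^{-p} \|u\|_{\cF(X)}^{p}
\]
follows exactly as in Theorem \ref{cap weak type}, since $\M u \le C\M^* u$, the function $\M^* u$ is lower semicontinuous, and $C\lambda^{-1}\M^* u$ is therefore admissible for the capacity of the open set where it exceeds $1$.

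With these two tools in hand, the rest of the argument transcribes the proof of Theorem \ref{main thm} essentially verbatim, with Theorem \ref{density of loc lip 2} supplying continuous approximations in the $\cF$-norm when $q<\infty$ (and \cite[Proposition 4.5]{SYY} when $q=\infty$). Choose continuous $u_i$ with $\|u-u_i\|_{\cF(X)}^p < 2^{-(1+p)i}$, set $A_i = \{\M(u-u_i) > 2^{-i}\}$, $B_k = \bigcup_{i\ge k} A_i$, and $E = \bigcap_k B_k$. The weak-type bound gives $C_\cF(A_i) \le C 2^{-i}$, and the $r$-subadditivity from Lemma \ref{r-subadd} yields $C_\cF(B_k) \le C 2^{-k}$, so $C_\cF(E)=0$. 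For $x\notin A_i$, the Lebesgue differentiation theorem applied to $u-u_i$ (which is locally integrable by the parameter restriction) together with the trivial bound $|v_{B(x,r)}| \le \M v(x)$ gives
\[
\limsup_{r\to 0}\,|u_i(x) - u_{B(x,r)}| \le \M(u-u_i)(x) \le 2^{-i},
\]
so $(u_i)$ is uniformly Cauchy on $X\setminus B_k$, converges there to a continuous limit $u^*$, and $u^*(x) = \lim_{r\to 0} u_{B(x,r)}$ for every $x\in X\setminus E$. The reduction from $\dot\cF(X)$ to $\cF(X)$ uses the cutoff argument from the last paragraph of the proof of Theorem \ref{main thm}.

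The main obstacle is the boundedness of $\M^*$ on $\cF(X)$: this is where one needs Lemma \ref{lemma 2} with some $t>Q/(Q+s)$, and this is precisely what forces the restrictions $p>Q/(Q+s)$ in the Besov case and $\min\{p,q\}>Q/(Q+s)$ in the Triebel--Lizorkin case. Once this bound and the resulting capacitary weak-type inequality are in place, the deduction of quasicontinuity and existence of Lebesgue points is a routine adaptation of the median case.
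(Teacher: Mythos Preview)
There is a genuine gap in the first step. You claim that the unrestricted discrete maximal operator $\M^*$ is bounded on $\cF(X)$, in particular that $\|\M^* u\|_{L^p(X)}\le C\|u\|_{L^p(X)}$. But the Hardy--Littlewood maximal operator is not bounded on $L^p$ for $p\le 1$, and the parameter range $Q/(Q+s)<p$ always allows $p\le 1$ since $Q/(Q+s)<1$. Concretely, for any nonzero compactly supported integrable $u$ on $\rn$ one has $\M u(x)\approx |x|^{-n}\int|u|$ at infinity, so $\M u\notin L^p$ whenever $p\le 1$. Thus $C\lambda^{-1}\M^* u$ is not in $\cF(X)$ and is not an admissible test function for the capacity; the global capacitary weak-type estimate cannot be obtained this way. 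The same difficulty blocks your treatment of the case $q=\infty$: the reference to \eqref{median maximal operator bounded in Lp} is to a bound that holds for the \emph{median} maximal operator at every $p>0$, and has no analogue for $\M$ when $p\le 1$.

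The paper circumvents this by working with the \emph{restricted} maximal functions $\M_R$ and $\M_R^*$, for which almost-everywhere finiteness is automatic for locally integrable $u$. Only the homogeneous estimate $\|\M_R^* u\|_{\dot\cF(X)}\le C\|u\|_{\dot\cF(X)}$ is available (Theorem \ref{HeTu}); the missing $L^p$ control is recovered indirectly. One multiplies $u$ by a Lipschitz cutoff $\varphi$ supported in a fixed ball $6B$, so that $\M_R^*(u\varphi)$ vanishes outside $10B$, and then invokes Lemma \ref{uusi lemma}, which bounds $\|\cdot\|_{\cF}$ by $\|\cdot\|_{\dot\cF}$ for functions with such support provided a suitable sphere is nonempty. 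This yields a \emph{localized} weak-type estimate on each ball (Theorem \ref{cap weak type 2}); covering $X$ by such balls and using the $r$-subadditivity of the capacity then gives the Lebesgue-point conclusion. Your argument is correct when $p>1$, but for the full range the localization via restricted maximal functions is essential.
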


Throughout the rest of this section, we assume that $\cF$ satisfies the assumptions of Theorem \ref{leb points}.
Since the condition $p>Q/(Q+s)$ guarantees only local integrability of Besov or Triebel--Lizorkin functions, 
we have to use restricted maximal functions
\[
\M_Ru(x)=\sup_{0<r<R} |u|_{B(x,r)}
\]
and
\[
\M_R^*u(x)=\sup_{2^k<R} |u|_{2^k}(x).
\]
Notice that if $u$ is locally integrable and $R<\infty$, then $\M_Ru$ and $\M_R^*u$ are almost everywhere finite. 
It is easy to see that there exists a constant $C=C(c_d)$ such that
\begin{equation}\label{comparability for restricted maximal functions}
C^{-1}\M_{C^{-1}R}u\le \M_R^*u\le C\M_{CR}u.
\end{equation}

Theorem \ref{HeTu} below follows from the proofs of Theorems 3.4, 4.7 and 4.8 in \cite{HeTu}.

\begin{theorem}\label{HeTu}
Let $0<R<\infty$. Then there exists a constant $C=C(c_d,s,p,q)$ such that
\[
\|\M_R^*u\|_{\dot \cF(X)}\le C\|u\|_{\dot\cF(X)}
\]
for all $u\in \dot\cF(X)$. 
\end{theorem}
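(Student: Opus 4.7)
The plan is to mirror the proof of Theorem \ref{median maximal op bounded}, with integral averages and the pointwise estimate Lemma \ref{lemma 2} playing the roles of $\gamma$-medians and Lemma \ref{med pointwise}. The hypothesis $t>Q/(Q+s)$ of Lemma \ref{lemma 2} is exactly why the range $p>Q/(Q+s)$ (and, in the Triebel--Lizorkin case, $q>Q/(Q+s)$) appears in the statement. Fix $u\in\dot\cF(X)$ and $(g_k)_{k\in\z}\in\D^s(u)$; observe that $(g_k)$ is also a fractional $s$-gradient of $|u|$.

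First I would prove the analogue of Lemma \ref{frac grad for med max} for integral-average discrete convolutions: for suitable $t$, there exist constants $C>0$ and $0<s'<s$ such that, setting
\[
\tilde g_j=\sum_{l\in\z}2^{-|l-j|\delta}(\M g_l^t)^{1/t},\qquad\delta=\min\{s',1-s\},
\]
the sequence $(C\tilde g_j)_{j\in\z}$ is a fractional $s$-gradient of $|u|_{2^k}$ for every $k\in\z$ with $2^k<R$. In the Besov case one takes $Q/(Q+s)<t<p$, in the Triebel--Lizorkin case $Q/(Q+s)<t<\min\{p,q\}$. The proof is a line-by-line transcription of the proof of Lemma \ref{frac grad for med max}, reading the proof of Theorem \ref{density of loc lip 2} in place of that of Theorem \ref{density of loc lip}: Lemma \ref{lemma 1} applied to the partition of unity, combined with the bounded overlap of $\{2B_i\}$ and Lemma \ref{lemma 2}, yields a fractional $s$-gradient of $|u|-|u|_{2^k}$ in the two-case form appearing there; adding $g_k$ produces one for $|u|_{2^k}$, and the termwise simplifications at the end of the proof of Lemma \ref{frac grad for med max} show that each component is dominated by $C\tilde g_j$.

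Since $u$ is locally integrable ($p>Q/(Q+s)$) and $R<\infty$, the restricted maximal function satisfies $\M_R^*u\le C\M_{CR}u<\infty$ a.e.\ by \eqref{comparability for restricted maximal functions}, so Lemma \ref{sup lemma} applies to $\M_R^*u=\sup_{2^k<R}|u|_{2^k}$. Because the sequence $(\tilde g_j)$ is independent of $k$, we conclude that $(C\tilde g_j)\in\D^s(\M_R^*u)$. Lemma \ref{hk lemma} then yields $\|(\tilde g_j)\|_{l^q(L^p(X))}\le C\|(g_j)\|_{l^q(L^p(X))}$, respectively the corresponding $L^p(X,l^q)$ estimate, and taking the infimum over $(g_k)\in\D^s(u)$ closes the Besov and Triebel--Lizorkin cases. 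For $\cF=M^{s,p}$ with $0<s\le1$ and $q=\infty$, the argument collapses to the analogue of Lemma \ref{gradient for discrete median maximal function}: from the standard pointwise estimate $|u(x)-u_{B_i}|\le Cr^s(\M g^t(x))^{1/t}$ (valid for $Q/(Q+s)<t<p$ by chaining from Lemma \ref{poincare}) and bounded overlap, one shows that $C(\M g^t)^{1/t}$ is an $s$-gradient of $u_r$ for every $r<R$, hence of $\M_R^*u$; the $L^p$-bound then follows from the Hardy--Littlewood maximal theorem since $t<p$.

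The main technical obstacle is the first step: producing a single sequence $(\tilde g_j)$ that simultaneously serves as a fractional $s$-gradient of $|u|_{2^k}$ for all admissible $k$. This is precisely where $p>Q/(Q+s)$ cannot be dropped, since Lemma \ref{lemma 2} fails otherwise. The restriction $R<\infty$, by contrast, enters only to guarantee the a.e.\ finiteness of $\M_R^*u$ needed to invoke Lemma \ref{sup lemma}.
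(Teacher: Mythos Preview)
Your proposal is correct and is precisely the natural argument: the paper itself does not give a self-contained proof here but simply states that the result ``follows from the proofs of Theorems 3.4, 4.7 and 4.8 in \cite{HeTu}.'' What you outline---replacing Lemma \ref{med pointwise} by Lemma \ref{lemma 2}, rerunning the proof of Lemma \ref{frac grad for med max} with the integral-average convolution, invoking Lemma \ref{sup lemma} (using $R<\infty$ for a.e.\ finiteness) and then Lemma \ref{hk lemma}---is exactly the adaptation of the median-case argument (Theorem \ref{median maximal op bounded}) that the cited reference carries out, and your identification of the role of the constraint $t>Q/(Q+s)$ is accurate.
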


We need the following simple lemma.

\begin{lemma}\label{uusi lemma} 
Suppose that $u=0$ outside $B=B(x_0,r)$ and that there exists $y_0\in X$ such that $d(x_0,y_0)=3r$.
Then there is a constant $C=C(c_d,s,p,q)$ such that $\|u\|_{\cF(X)}\le C(1+r^s)\|u\|_{\dot\cF(X)}$.
\end{lemma}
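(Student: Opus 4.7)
The plan is to exploit that $u$ vanishes outside $B$ by pairing each $x \in B$ with points $y$ lying in the well-separated ball $A = B(y_0, r)$, on which $u \equiv 0$ almost everywhere. Since $d(x_0, y_0) = 3r$, the ball $A$ is disjoint from $B$ and contained in $B(x_0, 4r)$, while $B$ is contained in $B(y_0, 4r)$, so the doubling condition yields $\mu(B) \le C\mu(A)$ (and the reverse).

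Fix any $(g_k)_{k \in \z} \in \D^s(u)$. For almost every $(x, y) \in B \times A$ we have $u(y) = 0$ and $r \le d(x, y) \le 5r$, so the integer $k$ determined by $2^{-k-1} \le d(x, y) < 2^{-k}$ takes one of only finitely many values $k_1, \ldots, k_m$, with $m$ a constant independent of $r$. Writing $k = k(x,y)$ for this integer, the fractional $s$-gradient condition gives
\[
|u(x)| = |u(x) - u(y)| \le d(x, y)^s \bigl( g_{k(x,y)}(x) + g_{k(x,y)}(y) \bigr) \le C r^s \bigl( g_{k(x,y)}(x) + g_{k(x,y)}(y) \bigr).
\]

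Next I would integrate this estimate over $y \in A$ and then over $x \in B$. When $p \ge 1$, averaging in $y$ and partitioning $A$ according to the value of $k(x,y)$ gives $|u(x)| \le C r^s \sum_{i=1}^m \bigl( g_{k_i}(x) + (g_{k_i})_A \bigr)$. Applying Minkowski, Jensen's inequality $\bigl(\vint_A g_{k_i}\,d\mu\bigr)^p \le \vint_A g_{k_i}^p\,d\mu$, and the doubling bound $\mu(B) \le C \mu(A)$ then yields
\[
\|u\|_{L^p(X)}^p \le C r^{sp} \sum_{i=1}^m \|g_{k_i}\|_{L^p(X)}^p.
\]
When $0 < p < 1$, I instead raise the pointwise bound to the $p$-th power via $(a+b)^p \le a^p + b^p$, integrate $|u(x)|^p$ first in $y \in A$ and then in $x \in B$, and reach the same estimate without needing Jensen.

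To conclude, I bound the finite sum by the $\dot\cF$-norm. For $\cF = N^s_{p,q}$ one has $\|g_{k_i}\|_{L^p(X)} \le \|(g_k)\|_{l^q(L^p(X))}$, since any $l^q$-quasinorm dominates the $l^\infty$-norm. For $\cF = M^s_{p,q}$ one has $g_{k_i}(x) \le \|(g_k(x))\|_{l^q}$ pointwise, and so $\|g_{k_i}\|_{L^p(X)} \le \|(g_k)\|_{L^p(X,\,l^q)}$. Taking the infimum over $(g_k) \in \D^s(u)$ gives $\|u\|_{L^p(X)} \le C r^s \|u\|_{\dot\cF(X)}$, and adding $\|u\|_{\dot\cF(X)}$ to both sides produces the claimed bound. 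There is no serious obstacle here; the argument is elementary once one spots that the far ball $A = B(y_0, r)$ supplies a set of comparable measure on which $u$ vanishes. The only mild book-keeping concerns unifying the $p \ge 1$ and $p < 1$ regimes and observing that only boundedly many indices $k$ appear as $d(x,y)$ ranges over $[r, 5r]$.
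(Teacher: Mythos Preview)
Your proposal is correct and follows essentially the same approach as the paper: both compare $u(x)$ with $u(y)=0$ for $y$ in a region disjoint from $B$ but of comparable measure (the paper uses the annulus $4B\setminus 2B\supset B(y_0,r)$), observe that only boundedly many scales $k$ occur, and then pass to the $L^p$-norm using doubling. The only cosmetic difference is that the paper takes the essential infimum over $y$ in the pointwise bound, which yields $|u(x)|\le Cr^s\big(g(x)+\essinf_{4B\setminus 2B}g\big)$ and thereby handles all $0<p<\infty$ uniformly without your case split (indeed your ``$p<1$'' argument already works for every $p$).
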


\begin{proof}
Let $(g_k)_{k\in\z}\in \D^s(u)$. Then, for almost every $x\in B$,
\[
|u(x)|\le 5^sr^s\Big(g(x)+\essinf_{y\in 4B\setminus 2B}g(y)\Big),
\]
where $g=\max\{g_k:r/2\le  2^{-k}<10r\}$. Since $B(y_0,r)\subset 4B\setminus 2B$ and $B\subset B(y_0,4r)$, 
it follows from the doubling property that $\mu(B)\le C\mu(4B\setminus 2B)$. Hence
\[
\begin{split}
\|u\|_{L^p(X)}
&=\|u\|_{L^p(B)}\le Cr^s\|g\|_{L^p(4B)}\\
&\le Cr^s\min\{\|(g_k)\|_{l^q(L^p(X))},\|(g_k)\|_{L^p(X, l^q)}\}.
\end{split}
\]
Since this holds for every $(g_k)_{k\in\z}\in \D^s(u)$, we have that
\[
\|u\|_{L^p(X)}\le Cr^s\|u\|_{\dot\cF(X)}
\]
and the claim follows.
\end{proof}


By combining Theorem \ref{HeTu} and Lemma \ref{uusi lemma}, we obtain a localized capacitary weak-type estimate for the restricted Hardy--Littlewood maximal function.

\begin{theorem}\label{cap weak type 2}
Let $B=B(x_0,r)$ be a ball and assume that the sphere $\{y: d(y,x_0)=30r\}$ is nonempty.
Then there exist constants $c=c(c_d)$ and $C=C(c_d,s,p,q,r)$ such that
\[
C_{\cF}\big(\{x\in B: \M_{r/c} u(x) >\lambda\}\big)
\le C\lambda^{-p}\|u\|_{\cF(X)}^p
\]
for all $u\in \cF(X)$.
\end{theorem}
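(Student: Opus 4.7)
The plan is to localize $u$ using a Lipschitz cutoff so that the restricted maximal function of the truncated function has controlled support, and then convert a $\dot\cF$-bound for the discrete maximal function into a full $\cF$-bound via Lemma~\ref{uusi lemma}. A standard passage from a norm estimate to a capacity weak-type estimate will finish the proof.

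First I would fix a $(C/r)$-Lipschitz function $\varphi\colon X\to[0,1]$ with $\varphi=1$ on $2B$ and $\varphi=0$ outside $3B$, and set $v=u\varphi\in\cF(X)$. The product rule of Lemma~\ref{lemma 1} (together with its $M^{s,p}$-analogue) yields $\|v\|_{\dot\cF(X)}\le C(1+r^{-s})\|u\|_{\cF(X)}$ with a constant depending only on $c_d,s,p,q$. For any $x\in B$ and any $0<\rho<r/c$ (with $c\ge 1$) one has $B(x,\rho)\subset 2B$, hence $\M_{r/c}u(x)=\M_{r/c}v(x)$. By the comparability \eqref{comparability for restricted maximal functions}, $\M_{r/c}v\le C\M^*_R v$ with $R=Cr/c$; enlarging $c=c(c_d)$ if necessary, I may assume $R\le r$. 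A direct inspection of the discrete convolution shows that at every scale $2^k<R$ the function $|v|_{2^k}$ is supported in $B(x_0,3r+3R)\subset B(x_0,10r)$, so $\operatorname{supp}\M^*_R v\subset B(x_0,10r)$.

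Since $\M^*_R v$ is lower semicontinuous, for each $\epsilon\in(0,1)$ the function $w=(1-\epsilon)^{-1}C\lambda^{-1}\M^*_R v$ is admissible for the set $\{x\in B:\M_{r/c}u(x)>\lambda\}$. By Theorem~\ref{HeTu}, $\|\M^*_R v\|_{\dot\cF(X)}\le C\|v\|_{\dot\cF(X)}$; the nonempty sphere hypothesis at $30r$ is exactly what allows me to apply Lemma~\ref{uusi lemma} with $x_0$ and radius $10r$ (since $30r=3\cdot 10r$), yielding $\|\M^*_R v\|_{\cF(X)}\le C(1+r^s)\|\M^*_R v\|_{\dot\cF(X)}$. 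Chaining these bounds gives $\|w\|_{\cF(X)}^p\le C(1-\epsilon)^{-p}\lambda^{-p}\|u\|_{\cF(X)}^p$, and letting $\epsilon\to 0$ produces the claimed estimate. The principal obstacle is calibrating the cutoff scale so that the support of $\M^*_R v$ fits inside a ball whose triple coincides with the available sphere: this forces $R$ to be small compared to $r$, which explains both the need for the restricted maximal operator and the $r$-dependence of the constant.
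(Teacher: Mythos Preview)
Your proof is correct and follows essentially the same route as the paper's: localize with a Lipschitz cutoff, pass to the restricted discrete maximal function, bound its homogeneous seminorm via Theorem~\ref{HeTu}, upgrade to the full $\cF$-norm via Lemma~\ref{uusi lemma} (this is where the $30r$-sphere hypothesis enters), and read off the capacity estimate from admissibility. The only cosmetic differences are that the paper takes $\varphi=1$ on $5B$ and $\varphi=0$ outside $6B$ and compares $\M^*_r u$ with $\M^*_r(u\varphi)$ directly at the discrete level, whereas you use a tighter cutoff ($2B$/$3B$) and compare the ordinary restricted maximal functions before invoking \eqref{comparability for restricted maximal functions}.
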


\begin{proof} 
Let $c$ be the constant in \eqref{comparability for restricted maximal functions}. Then $\M_{r/c}u(x)\le c\M^*_ru(x)$.
Let $\varphi\ge0$ be a Lipschitz function such that $\varphi |_{5B}=1$ and $\varphi |_{X\setminus 6B}=0$.
Then, by \cite[Lemma 3.10 and Remark 3.11]{HIT}, $u\varphi\in \cF(X)$ 
and 
\begin{equation}\label{est}
\|u\ph\|_{\cF(X)}\le C\|u\|_{\cF(X)}.
\end{equation}
If $x\in B$, then $\M^*_ru(x)=\M^*_r(u\ph)(x)$.  Hence,
\[
\{x\in B: \M_{r/c} u(x) >\lambda\}\subset \{x\in X: c\lambda^{-1}\M^*_r (u\ph)(x) >1\}
\]
and so
\[
C_{\cF}\big(\{x\in B: \M_{r/c} u(x) >\lambda\}\big)
\le C\lambda^{-p}\|\M^*_r (u\ph)\|_{\cF(X)}^p.
\]
If $x\in X\setminus 10B$, then $\M^*_r(u\ph)(x)=0$. Thus,
by Lemma \ref{uusi lemma}, Theorem \ref{HeTu} and \eqref{est},
\[
\|\M^*_r(u\ph)\|_{\cF(X)}\le C\|\M^*_r(u\ph)\|_{\dot\cF(X)}\le C\|u\ph\|_{\dot\cF(X)}\le C\|u\|_{\cF(X)}
\]
and the claim follows.
\end{proof}

\begin{proof}[Proof of Theorem \ref{leb points}]
We may assume that $X$ contains at least two points. 
Then $X$ can be covered by balls $B_i=B(x_i,r_i)$, $i\in I$, where $I\subset\n$, such that the spheres $\{y: d(x_i,y)=30r_i\}$ are nonempty.




Theorem \ref{cap weak type 2} and a similar argument as in the proof of Theorem \ref{main thm} imply that, for every $i$,
there exists a set $E_i$ with $C_{\cF}(E_i)=0$ such that the limit $\lim_{r\to 0} u_{B(x,r)}=u^*(x)$ exists in $B_i\setminus E_i$. 
Moreover, for every $\eps>0$, there exists a set $U_{i}$ such that $C_{\cF}(U_i)<2^{-i}\eps$ and $u^*$ is continuous in 
$B_i\setminus U_i$.
Hence, the limit
\[
\lim_{r\to 0} u_{B(x,r)}=u^*(x)
\]
exists in $X\setminus \cup_{i\in I}E_i$ and $u^*$ is continuous in $X\setminus \cup_{i\in I}U_i$.
By \eqref{eq: r-subadd}, $C_{\cF}(\cup_{i\in I}E_i)=0$ and $C_{\cF}(\cup_{i\in I}U_i)<C\eps$.
\end{proof}

The following corollary of Theorem \ref{leb points} extends the results obtained in \cite{KL}, \cite{KiTu} and \cite{P} to the case 
$Q/(Q+s)<p<1$.

\begin{corollary}\label{leb points for Msp}
Let $0<s\le 1$, $Q/(Q+s)<p<\infty$ and $u\in \dot M^{s,p}(X)$. 
Then $M^{s,p}$-quasievery point is a Lebesgue point of $u$ and $u^*(x)=\lim_{r\to 0}u_{B(x,r)}$ is an $M^{s,p}$-quasicontinuous representative of $u$. 
\end{corollary}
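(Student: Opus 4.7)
The plan is to derive this as an immediate specialization of Theorem \ref{leb points}. Recall the identification $M^s_{p,\infty}(X)=M^{s,p}(X)$ noted after the definition of the Haj\l asz--Triebel--Lizorkin spaces, which comes from \cite[Prop.\ 2.1]{KYZ}. The two (quasi-)norms are literally equal, so the families of admissible functions coincide and the capacities $C_{M^s_{p,\infty}}$ and $C_{M^{s,p}}$ agree. In particular, $M^{s,p}$-quasicontinuity is the same notion as $M^s_{p,\infty}$-quasicontinuity, and exceptional sets of capacity zero are the same sets.

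With this identification in hand, the hypotheses $0<s\le 1$ and $Q/(Q+s)<p<\infty$ fall exactly into the third branch of the parameter assumption of Theorem \ref{leb points} (namely $\cF=M^s_{p,q}$ with $q=\infty$). For $u\in\dot M^{s,p}(X)=\dot M^s_{p,\infty}(X)$, Theorem \ref{leb points} therefore yields directly that the limit $u^*(x)=\lim_{r\to 0} u_{B(x,r)}$ exists outside a set of $M^s_{p,\infty}$-capacity zero, and that the representative $u^*$ is $M^s_{p,\infty}$-quasicontinuous; translating back via the identification gives the two conclusions of the corollary.

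There is no substantive obstacle here: the corollary is a pure renaming. The only point worth checking explicitly is that the parameter range of the hypothesis matches the $q=\infty$ branch of Theorem \ref{leb points}, which imposes the restriction $Q/(Q+s)<p<\infty$ already present in the statement; no additional integrability, density, or maximal-function estimate needs to be reproved.
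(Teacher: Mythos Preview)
Your proposal is correct and matches the paper's treatment exactly: the paper presents Corollary \ref{leb points for Msp} without proof, describing it simply as ``the following corollary of Theorem \ref{leb points}'', relying on the identification $M^s_{p,\infty}(X)=M^{s,p}(X)$ already noted after the definition of Haj\l asz--Triebel--Lizorkin spaces.
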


\noindent {\bf Acknowledgements:} The research was supported by the Academy of Finland. 
Part of this research was conducted during the visit of the third author to Forschungs\-institut f\"ur Mathematik of ETH Z\"urich, and she wishes to thank the institute for the kind hospitality.

\vspace{0.5cm}
\noindent
\small{\textsc{T.H.},}
\small{\textsc{Department of Mathematics},}
\small{\textsc{P.O. Box 11100},}
\small{\textsc{FI-00076 Aalto University},}
\small{\textsc{Finland}}\\
\footnotesize{\texttt{toni.heikkinen@aalto.fi}}

\vspace{0.5cm}
\noindent
\small{\textsc{P.K.},}
\small{\textsc{Department of Mathematics and Statistics},}
\small{\textsc{P.O. Box 35},}
\small{\textsc{FI-40014 University of Jyv\"askyl\"a},}
\small{\textsc{Finland}}\\
\footnotesize{\texttt{pekka.j.koskela@jyu.fi}}

\vspace{0.3cm}
\noindent
\small{\textsc{H.T.},}
\small{\textsc{Department of Mathematics and Statistics},}
\small{\textsc{P.O. Box 35},}
\small{\textsc{FI-40014 University of Jyv\"askyl\"a},}
\small{\textsc{Finland}}\\
\footnotesize{\texttt{heli.m.tuominen@jyu.fi}}

\end{document}